\newtheorem{theorem}{Theorem}[section]
\newtheorem{lemma}[theorem]{Lemma}
\newtheorem{prop}[theorem]{Proposition}
\newtheorem{cor}[theorem]{Corollary}
\newtheorem{definition}[theorem]{Definition}
	\newenvironment{customthm}[1]
  		{\innercustomthm}
		{\endinnercustomthm}
	\newenvironment{customprop}[1]
	{\innercustomprop}
	{\endinnercustomprop}
	\newenvironment{customlem}[1]
	{\innercustomlem}
	{\endinnercustomlem}
\newcommand*{\dual}[1]{{#1}^{\scalebox{.75}{\!$\boldsymbol{\vee}$}}}
\numberwithin{equation}{section}
\begin{document}

\title{Hida duality and the Iwasawa main conjecture}

%    Information for first author
\author{Matthew J. Lafferty}
%    Address of record for the research reported here
\address{Department of Mathematics, University of Arizona}
%    Current address
%\curraddr{Department of Mathematics and Statistics,
	% Western Reserve University, Cleveland, Ohio 43403}
\email{mlaffert@email.arizona.edu}
%    \thanks will become a 1st page footnote.
%\thanks{The first author was supported in part by NSF Grant \#000000.}

%    Information for second author
%\author{Author Two}
%\address{Mathematical Research Section, School of Mathematical Sciences,
	%Australian National University, Canberra ACT 2601, Australia}
%\email{two@maths.univ.edu.au}
%\thanks{Support information for the second author.}

%    General info
%\subjclass[2000]{Primary 54C40, 14E20; Secondary 46E25, 20C20}

%\date{January 1, 2001 and, in revised form, June 22, 2001.}

%\dedicatory{This paper is dedicated to our advisors.}

%\keywords{Differential geometry, algebraic geometry}

\begin{abstract} \noindent  The central result of this paper is a refinement of Hida's duality theorem between ordinary $\Lambda$-adic modular forms and the universal ordinary Hecke algebra. Specifically, we give a necessary condition for this duality to be integral with respect to particular submodules of the space ordinary $\Lambda$-adic modular forms. This refinement allows us to give a simple proof that the universal ordinary cuspidal Hecke algebra modulo Eisenstein ideal is isomorphic to the Iwasawa algebra modulo an ideal related to the Kubota-Leopoldt $p$-adic $L$-function. The motivation behind these results stems from Ohta's proof of the Iwasawa main conjecture over $\mathbb{Q}$. Specifically, the most general application of this argument, which employs results on congruence modules and requires one to make some restrictive hypotheses. Using our results we are able to extend Ohta's argument and remove these hypotheses.
\end{abstract}

\maketitle

%~~~~~~~~~~~~~~~~~~~~~~~~~~~~~~~~~~~ 
%~~~~~~~~~~~~~~~~~~~~~~~~~~~~~~~~~~~
%~~~~~~~~~~~~~~~~~~~~~~~~~~~~~~~~~~~
\section{Introduction}

In order to fix notation, we begin by recalling the setup and statement of the Iwasawa main conjecture over $\mathbb{Q}$.  We fix a prime $p\geq 5$ throughout. For an arbitrary Dirichlet character $\varphi$, we let $M_\varphi$ and $f_\varphi$ denote its modulus of definition and conductor, respectively. For a non-negative integer $n$, we denote the primitive character associated to the product $\varphi\omega^{-n}$ by $\varphi_n$, where $\omega$ is the usual Teichm\"{u}ller character. To keep the notation compact and avoid ambiguity we set $\varphi_n^{-1} = (\varphi_n)^{-1}$. Let $\theta$ and $\psi$ be Dirichlet characters (possibly imprimitive) such that $p\nmid M_\psi$, $p^2\nmid M_\theta$, and $\theta\psi$ is even. Define
\begin{align*}
F &~=~\text{abelian extension of $\mathbb{Q}$ corresponding to $\ker(\theta_{p-2})\cap \ker(\psi_0)$,}\\
F_\infty&~=~\text{cyclotomic $\mathbb{Z}_p$-extension of $F$,}\\
H_\infty&~=~\text{maximal unramified pro-$p$ abelian extension of $F_\infty$.}
\end{align*}

\noindent The group $\mathrm{Gal}(F_\infty/\mathbb{Q})$ acts on $X_\infty:=\mathrm{Gal}(H_\infty/F_\infty)$ via conjugation, and our assumptions on $M_\theta$ and $M_\psi$ imply that $\mathrm{Gal}(F_\infty/\mathbb{Q})\cong\Delta\times \Gamma$, where $\Delta:=\mathrm{Gal}(F/\mathbb{Q})$ and $\Gamma:=\mathrm{Gal}(F_\infty/F)\cong\mathbb{Z}_p$, making $X_\infty$ a $\mathbb{Z}_p[\Delta]\llbracket \Gamma\rrbracket$-module.

Set $\xi= (\theta^{-1}\psi)_0$ and let $\mathcal{O}_\xi=\mathbb{Z}_p[\xi]$ denote the ring generated over $\mathbb{Z}_p$ by the values of $\xi$. We remark that our choice of $\xi$ is inverse to that of Ohta. Define $$X_{\infty,\xi_1} ~=~X_\infty\otimes_{\mathbb{Z}_p[\Delta]} \mathcal{O}_\xi,$$ 

\noindent where the homomorphism $\mathbb{Z}_p[\Delta]\rightarrow \mathcal{O}_\xi$ is induced by $\xi_1$.  It can be shown that $X_{\infty,\xi_1}$ is a finitely generated torsion $\mathcal{O}_\xi\llbracket\Gamma\rrbracket$-module, and the structure of such modules is well understood. Let $\gamma$ be a topological generator of $\Gamma$, and identify the Iwasawa algebra $\mathcal{O}_\xi\llbracket\Gamma\rrbracket$ with $\Lambda_\xi:=\mathcal{O}_\xi\llbracket X\rrbracket$ through the continuous $\mathcal{O}_\xi$-linear map induced by $\gamma \mapsto 1+X$. We then have a homomorphism $$X_{\infty,\xi_1}\rightarrow \Lambda_\xi/(f_1)\oplus \cdots \oplus \Lambda_\xi/(f_r)$$ 

\noindent with finite kernel and cokernel, where the $f_i$ are non-zero elements of $\Lambda_\xi$. We refer to such a homomorphism as a pseudo-isomorphism. While the $f_i$ are not uniquely determined by $X_{\infty,\xi_1}$, their product is. The ideal $(f_1\cdots f_r)\subset\Lambda_\xi$ is called the characteristic ideal of $X_{\infty,\xi_1}$, which we denote by $\mathrm{Char}_{\Lambda_\xi}(X_{\infty,\xi_1}).$

Next, we recall that if $\varphi$ is a Dirichlet character with conductor not divisible by $p^2$, then there exists a unique element \begin{center}
$F(X,\varphi) \in \left\{\begin{array}{rcl}\mathbb{Z}_p[\varphi]\llbracket X\rrbracket & & \varphi\neq \mathbbm{1}\\
\frac{1}{X-p}\mathbb{Z}_p\llbracket X\rrbracket & & \varphi=\mathbbm{1},\end{array}\right.$\end{center} where $\mathbbm{1}$ denotes the trivial character modulo $1$, such that for all integers $k\geq 2$ and $\overline{\mathbb{Q}}^\times$-valued characters $\epsilon$ on $1+p\mathbb{Z}_p$ having $p$-power order, $$F(\epsilon(u)u^{k-2}-1,\varphi) ~=~ L_p(k-2,\varphi\epsilon^{-1}),$$ with $u:=1+p$ and $L_p(s,\varphi\epsilon^{-1})$ denoting the Kubota-Leopoldt $p$-adic $L$-function associated to the character $\varphi\epsilon^{-1}$ \cite[Theorem. 7.10]{Wash}.

\begin{theorem}[Iwasawa main conjecture over $\mathbb{Q}$] We have the following equality of ideals, $$\mathrm{Char}_{\Lambda_\xi}(X_{\infty,\xi_1})~=~\big(F(X,\xi_2^{-1})\big).$$\end{theorem}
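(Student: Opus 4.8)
The plan is to prove this by relating both sides of the equality to objects on the automorphic/Hecke-algebraic side, following the strategy pioneered by Mazur--Wiles and streamlined by Ohta, but using the refined Hida duality developed in the body of this paper to avoid congruence-module technicalities. First I would recall the two halves of the main conjecture and which one requires the hard work. The divisibility $\mathrm{Char}_{\Lambda_\xi}(X_{\infty,\xi_1}) \mid \big(F(X,\xi_2^{-1})\big)$ — equivalently, that the $p$-adic $L$-function annihilates (up to the characteristic ideal) the relevant Iwasawa module — is the ``analytic bounds arithmetic'' direction; this can be obtained from Iwasawa's classical construction of cyclotomic units, or alternatively deduced a posteriori from the opposite divisibility together with the analytic class number formula / Iwasawa's theorem that the two sides have the same $\mu$- and $\lambda$-invariants after summing over characters. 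I would state this divisibility and cite it, isolating the genuinely new content in the reverse divisibility.

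For the reverse divisibility $\big(F(X,\xi_2^{-1})\big) \mid \mathrm{Char}_{\Lambda_\xi}(X_{\infty,\xi_1})$, the key steps, in order, would be: (1) Introduce the universal ordinary cuspidal Hecke algebra $\mathfrak{h}$ acting on the space $S$ of ordinary $\Lambda$-adic cusp forms of tame level and character built from $\theta, \psi$, together with the Eisenstein ideal $\mathcal{I} \subset \mathfrak{h}$ cut out by the $\Lambda$-adic Eisenstein series attached to $(\theta,\psi)$. (2) Invoke the refined Hida duality of this paper — the necessary condition for integrality of the duality pairing with respect to the distinguished submodules of $\Lambda$-adic forms — to obtain the clean isomorphism $\mathfrak{h}/\mathcal{I} \cong \Lambda_\xi / (F(X,\xi_2^{-1}))$ (this is precisely the ``simple proof'' advertised in the abstract). (3) Construct, via the Galois representation $\rho : \mathrm{Gal}(\overline{\mathbb{Q}}/\mathbb{Q}) \to \mathrm{GL}_2(\mathfrak{h})$ attached to $\mathfrak{h}$ and a lattice argument in the style of Ribet/Wiles/Ohta, a surjection from $X_{\infty,\xi_1}$ (or an appropriate Eisenstein-component quotient of a Galois cohomology / class group module) onto $\mathfrak{h}/\mathcal{I}$, i.e. onto $\Lambda_\xi/(F(X,\xi_2^{-1}))$; concretely, reducibility of $\rho \bmod \mathcal{I}$ produces unramified cocycles valued in $\mathfrak{h}/\mathcal{I}$ twisted by $\xi$, yielding a nonzero $\Lambda_\xi$-module map $X_{\infty,\xi_1} \twoheadrightarrow \Lambda_\xi/(F(X,\xi_2^{-1}))$. (4) Conclude that $F(X,\xi_2^{-1})$ divides a generator of $\mathrm{Char}_{\Lambda_\xi}(X_{\infty,\xi_1})$, since the characteristic ideal of a quotient divides that of the module and $\mathrm{Char}_{\Lambda_\xi}(\Lambda_\xi/(F)) = (F)$.

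The main obstacle — and the place where the new input of the paper does its work — is step (3) combined with the passage between the two divisibilities. In Ohta's original argument one only gets the isomorphism $\mathfrak{h}/\mathcal{I} \cong \Lambda_\xi/(\text{something})$ under restrictive hypotheses (e.g.\ on the vanishing of certain $\mu$-invariants or on the character $\xi$), because the Eisenstein ideal computation relies on congruence modules whose control requires those hypotheses; replacing that computation with the refined duality theorem is what removes them, so I must be careful to check that the integrality criterion of the duality theorem is unconditionally satisfied for the submodules relevant here. The secondary difficulty is ensuring the lattice/cocycle construction in step (3) produces a map that is \emph{surjective} (not merely nonzero) onto $\Lambda_\xi/(F(X,\xi_2^{-1}))$, which typically uses that $\mathfrak{h}$ is Gorenstein or at least that $\mathfrak{h}/\mathcal{I}$ is cyclic over $\Lambda_\xi$ — again something the refined duality should supply. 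Finally, to upgrade the resulting divisibility to the asserted \emph{equality} I would combine it with the analytic class number formula (the product of the $F(X,\xi_2^{-1})$ over the relevant characters matches the product of the characteristic ideals), so that a divisibility in each character forced to be an equality in the product must be an equality in each factor; I would flag that this last synthesis is standard once both divisibilities are in hand.
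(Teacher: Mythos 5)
Your high-level plan is the right shape and matches the paper's strategy: use the analytic class number formula to reduce to one inclusion, feed the paper's refined Hida duality into the isomorphism of $\mathfrak{h}_\Lambda$ modulo the Eisenstein ideal, produce an abelian $p$-extension of $F_\infty$ from the Galois representation on ordinary \'etale cohomology by the Kurihara/Harder--Pink method, and bound characteristic ideals via Fitting ideals. You also correctly identify that the refined duality is what removes Ohta's restrictive hypotheses. However, there are two concrete gaps in the middle steps.

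First, you assert in step (2) that $\mathfrak{h}/\mathcal{I} \cong \Lambda_\xi/(F(X,\xi_2^{-1}))$. What the paper actually proves (Proposition \ref{sep10-2014-304pm}) is $\mathfrak{h}_\Lambda/I_{\theta,\psi} \cong \Lambda/(A_{\theta,\psi})$, where $A_{\theta,\psi}$ is not $F(X,\xi_2^{-1})$: it carries the extra Euler factors $\prod_{\ell \mid f_\theta f_\psi,\, \ell \nmid f_\xi}\bigl((1+X)^{s(\ell)} - \xi(\ell)\ell^{-2}\bigr)$, a possible unit $\delta_{\theta,\psi}$, and a change of variable $X\mapsto u^{-1}(1+X)^{-1}-1$. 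These extra Euler factors are not cosmetic; they are precisely what account for the second gap. Second, in step (3) you say that reducibility of $\rho$ modulo the Eisenstein ideal ``produces unramified cocycles.'' That is false in the generality the paper works in: the extension $L/F_\infty$ produced by the Kurihara/Harder--Pink construction need not be unramified outside $p$. It can ramify at primes $\ell\mid N$ not dividing $f_\xi$, which is exactly the situation that the hypotheses (H1), (H2) were designed to avoid. Removing those hypotheses forces one to measure this ramification. The paper does this via Lemmas \ref{jul2-2014-307pm} and \ref{feb24-2015-257pm}, which bound $\mathrm{Gal}(L/K_\ell)$ by the polynomial $b_\ell(X)$, and then via the two Claims in the proof of Theorem \ref{feb20-2015-1011am}, which compare $\mathrm{Char}_{\Lambda_\xi}(\mathrm{Gal}(L^{\mathrm{un}}/F_\infty))$ with $\mathrm{Char}_{\Lambda_\xi}(\mathrm{Gal}(L/F_\infty))$ up to the factor $\prod_\ell b_\ell(X)$, and then check that each $b_\ell(X)$ is coprime to $F(X,\xi_2^{-1})$ so that these extra factors can be stripped off. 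Your proposal contains none of this ramification analysis, yet without it the map you construct only relates $X_{\infty,\xi_1}$ to a Galois group whose characteristic ideal is $(\tilde{A})$ rather than $(F(X,\xi_2^{-1}))$, and you cannot conclude $\mathrm{Char}_{\Lambda_\xi}(X_{\infty,\xi_1}) \subseteq (F(X,\xi_2^{-1}))$. Relatedly, your claimed surjection $X_{\infty,\xi_1}\twoheadrightarrow \Lambda_\xi/(F)$ is stronger than what the construction gives; the paper gets only a quotient map $X_{\infty,\xi_1}\twoheadrightarrow \mathrm{Gal}(L^{\mathrm{un}}/F_\infty)$ together with an injection (becoming an isomorphism after inverting $X$) $\mathrm{Gal}(L/F_\infty)\hookrightarrow \mathcal{B}/I_S^*\mathcal{B}$, and extracts the divisibility by passing to Fitting and characteristic ideals, with a Ferrero--Greenberg argument to remove a stray power of $X$.
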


The main conjecture was first proven by Mazur and Wiles \cite{MW}, and has since been proven in even greater generality. In \cite{WilesTR},  Wiles simplified the proof in his paper with Mazur while generalizing it to extensions of totally real fields, including the case when $p=2$.  Around the same time, Rubin gave a much simpler proof of the main conjecture over $\mathbb{Q}$ (resp., over imaginary quadratic fields) using a tool from Galois cohomology known as an Euler system \cite{Rubin}. More recently, Ohta has given a simple proof in the spirit of Mazur and Wiles \cite{OhtaEC1, OhtaEC2, OhtaCM, OhtaCF1, OhtaCF2}. The most general application of Ohta's argument \cite{OhtaCM} employs results on congruence modules which require the following hypotheses:
\begin{center}
\begin{enumerate}
\itemsep0em
\item[(H1)] $p\nmid \varphi(N)$ ($\varphi$ is Euler's totient function)\label{euler},

\item[(H2)] the pair $(\theta,\psi)$ is \emph{non-exceptional}: $(\theta\psi^{-1})_{p-2}(p)\neq 1$.
\end{enumerate}
\end{center}

\noindent The results in this paper were obtained in an effort to extend Ohta's proof in \cite{OhtaCM} by circumventing the obstructions arising from his congruence module argument.

Before giving a summary of the main results that will be used to extend Ohta's proof, we want to briefly comment on why one might be interested in doing so. Recently, Sharifi \cite{Sh} has conjectured a deep relationship between $X_\infty$ and $p$-adic Eichler-Shimura cohomology groups of modular curves, and one can show that the Iwasawa main conjecture over $\mathbb{Q}$ is a shadow of this deeper relationship \cite{FKS}. However, Sharifi's constructions incorporate Ohta's work on the main conjecture, and as such the above hypotheses are assumed. By removing these hypotheses in the context of Ohta's proof of the main conjecture, one hopes to be able to free Sharifi's conjectures of them as well.

%~~~~~~~~~~~~~~~~~~
\subsection{Main results}  We fix algebraic closures $\overline{\mathbb{Q}}$ and $\overline{\mathbb{Q}}_p$, and  denote the completion of $\overline{\mathbb{Q}}_p$ by $\mathbb{C}_p$. Throughout this paper $\mathcal{O}$ will denote the ring of integers of a complete subfield of $\mathbb{C}_p$ with uniformizer $\pi$. We set $\Lambda=\mathcal{O}\llbracket X\rrbracket$. Fix a positive integer $N$ coprime to $p$ satisfying $M_\theta M_\psi \mid Np$. Let $M_\Lambda$ (resp., $S_\Lambda$) denote the space of ordinary $\Lambda$-adic modular forms (resp., ordinary $\Lambda$-adic cusp forms) of level $N$, and $\mathfrak{H}_\Lambda$ (resp., $\mathfrak{h}_\Lambda$) Hida's universal ordinary Hecke  algebra (resp., universal ordinary cuspidal Hecke algebra) of level $N$. It is well known that $M_\Lambda$, $S_\Lambda$, $\mathfrak{H}_\Lambda$, and $\mathfrak{h}_\Lambda$ are free and finitely generated $\Lambda$-modules.

For the moment, let us assume that $\mathcal{O}$ contains \emph{all} roots of unity. In \cite{OhtaCM}, Ohta considers the following exact sequence of $\mathfrak{H}_\Lambda$-modules, $$\begin{CD}0 @>>> S_\Lambda @>>> M_\Lambda @>\mathrm{Res}_\Lambda>> C_\Lambda @>>> 0,\end{CD}$$

\noindent where $\mathrm{Res}_\Lambda$ is the $\Lambda$-adic residue map and $C_\Lambda$ is the space of ordinary $\Lambda$-adic cusps. Of particular interest is the image of ordinary $\Lambda$-adic Eisenstein series under $\mathrm{Res}_\Lambda$, as understanding this image allows one to determine congruences between such series and ordinary $\Lambda$-adic cusp forms. For an Eisenstein series $\mathcal{E}\in M_\Lambda$ associated to a pair of primitive, non-exceptional characters, Ohta was able to determine $\mathrm{Res}_\Lambda(\mathcal{E})$ by localizing the above sequence at the Eisenstein maximal ideal $\mathfrak{M}:=(\pi,X,\mathrm{Ann}_{\mathfrak{H}_\Lambda}(\mathcal{E}))\subset \mathfrak{H}_\Lambda$. Specifically, he was able to isolate the image of $\mathcal{E}$ under the $\Lambda$-adic residue map by showing that the localization $C_{\Lambda,\mathfrak{M}}$ is a free $\Lambda$-module of rank 1. Unfortunately, this argument cannot be extended to Eisenstein series associated to pairs of exceptional characters, as $C_{\Lambda,\mathfrak{M}}$ in this case is free of rank 2. In this paper, rather than localizing the above sequence at $\mathfrak{M}$, we compute the image of Eisenstein series associated to arbitrary pairs of characters under the $\Lambda$-adic residue map directly.

\begin{customthm}{\ref{jun22-2015-947am}} Suppose $t$ is a positive integer coprime to $p$ satisfying $M_\theta M_\psi t\mid Np$, and let $\mathcal{E}_{\theta,\psi;t}\in M_\Lambda$ denote the Eisenstein series associated to the tuple $(\theta,\psi,t)$. Then $$\mathrm{Res}_\Lambda(\mathcal{E}_{\theta,\psi;t}) ~=~ A_{\theta,\psi}\cdot \mathfrak{e}_{\theta,\psi;t}$$ for an explicitly determined $\mathfrak{e}_{\theta,\psi;t}\in C_{\Lambda}$, with $$A_{\theta,\psi}:= \delta_{\theta,\psi}(X)\left(\prod_{\substack{\ell\mid f_\theta f_\psi \\ \ell\nmid f_{\xi}}} ((1+X)^{s(\ell)} - \xi(\ell)\ell^{-2})\right)F(u^{-1}(1+X)^{-1} - 1,\xi_2^{-1})~\in~\Lambda_\xi$$ where $$\delta_{\theta,\psi}(X) ~=~ \left\{\begin{array}{ccl}u^{-1}(1+X)^{-1}-u^{-2} & & (\theta_0,\psi_0)=(\omega^{-2},\mathbbm{1})\\ 1 & & (\theta_0,\psi_0)\neq(\omega^{-2},\mathbbm{1}).\end{array}\right.$$ Furthermore, $\mathfrak{e}_{\theta,\psi;t}\not\in \mathfrak{m} C_{\Lambda}$, where $\mathfrak{m}$ denotes the maximal ideal of $\Lambda$.\end{customthm}

\noindent We note that when $(\theta_0,\psi_0) = (\omega^{-2},\mathbbm{1})$, we have $A_{\theta,\psi}\in\mathbb{Z}_p\llbracket X\rrbracket^\times$  \cite[Lemma 7.12]{Wash}.

Let us now ease our restriction on $\mathcal{O}$, and assume only that $\mathcal{O}$ contains the values of $\theta$ and $\psi$. Using the above theorem we are able to construct a canonical element $\mathcal{F}_{\theta,\psi;t}\in M_\Lambda$ that maps to $\mathfrak{e}_{\theta,\psi;t}\in C_\Lambda$ under the $\Lambda$-adic residue map. This form arises from congruences between $\mathcal{E}_{\theta,\psi;t}$ and ordinary $\Lambda$-adic cusp forms, and has the following nice properties:

\begin{enumerate}

\item $\mathcal{F}_{\theta,\psi;t}\not\in\mathfrak{m}M_\Lambda$.

\item When $\psi=\mathbbm{1}$, we have $a_0(\mathcal{F}_{\theta,\psi;t})\in \Lambda^\times$.

\item $\mathcal{F}_{\theta,\psi;t}$ is a Hecke eigenform modulo $S_\Lambda$, whose eigenvalues agree with those of $\mathcal{E}_{\theta,\psi;t}$.

\end{enumerate}

\noindent Using the above properties and the congruences that define $\mathcal{F}_{\theta,\psi;t}$, we are able to prove the following refinement of Hida's duality theorem.

%~~~~~~~~~~~~~~~~~~~~~~~~~~~
%~~~~~~~~~~~~~~~~~~~~~~~~~~~
\begin{customthm}{\ref{jun22-2015-148pm}} Let $\mathcal{V}$ be a free $\Lambda$-submodule of $M_\Lambda$ that contains $S_\Lambda$ and is stable under the action of $\mathfrak{H}_\Lambda$. Denote the quotient field of $\Lambda$ by $Q(\Lambda)$ and define $$\mathcal{V}_0 ~=~ \{F\in \mathcal{V}\otimes_\Lambda Q(\Lambda) : a_n(F)\in \Lambda~\text{for~all~}n\geq 1\}.$$

\noindent Because $\mathcal{V}$ is stable under the action of $\mathfrak{H}_\Lambda$, we know that $$\mathcal{V}\otimes_\Lambda Q(\Lambda) =\langle \mathcal{E}_{\theta_1,\psi_1;t_1},\dots,\mathcal{E}_{\theta_m,\psi_m;t_m},F_1,\dots,F_s\rangle_{Q(\Lambda)}$$ where the tuples $(\theta_i,\psi_i;t_i)$ are distinct and $\{F_1,\dots,F_s\}$ is a $\Lambda$-basis of $S_\Lambda$
 $($Here we are assuming that $\mathcal{O}$ contains the values of all $\theta_i$ and $\psi_i)$. Define $\mathfrak{H}(\mathcal{V})$ to be the $\Lambda$-subalgebra of $\mathrm{End}_\Lambda(\mathcal{V})$ generated by the Hecke operators $\{T_n:n\geq 1\}$.

If the following conditions are satisfied for all integers $i$ and $j$ with $1\leq i<j\leq m$:
\begin{enumerate}\itemsep0em\item[$(i)$] $(\theta_i)_0\nequiv (\theta_j)_0~(\mathrm{mod}\,\pi) \text{ or~~}(\psi_i)_0\nequiv (\psi_j)_0~(\mathrm{mod}\,\pi),$

\item[$(ii)$] $(\theta_i)_0\nequiv (\psi_j\omega^{-1})_0~(\mathrm{mod}\,\pi) \text{ or~~}(\psi_i)_0\nequiv (\theta_j\omega)_0~(\mathrm{mod}\,\pi).$\end{enumerate}

\noindent we have $\mathcal{V}_0 = \langle \mathcal{F}_{\theta_1,\psi_1;t_1},\dots,\mathcal{F}_{\theta_m,\psi_m;t_m},F_1,\dots,F_s\rangle_{\Lambda} \subset M_\Lambda$ and the pairing $$\mathcal{V}_0\times \mathfrak{H}(\mathcal{V})\rightarrow \Lambda:(F,H)\mapsto a_1(F|H)$$ is perfect.
\end{customthm}

For the remainder of this subsection, let us assume that $M_\theta M_\psi =N$ or $Np$ and $\mathcal{O} = \mathbb{Z}_p[\theta,\psi]$. As we will see later, the former assumption ensures that $\mathcal{E}_{\theta,\psi;1}$ is a normalized common eigenform for $\mathfrak{H}_\Lambda$. Using the above refinement of Hida's duality theorem we are able to give a simple proof of the following proposition. 

%~~~~~~
\begin{customprop}{\ref{sep10-2014-304pm}} Let $I_{\theta,\psi}$ denote the image of $\mathrm{Ann}_{\mathfrak{H}_\Lambda}(\mathcal{E}_{\theta,\psi;1})$ in $\mathfrak{h}_\Lambda$. Then we have the following isomorphism of ${\Lambda}$-algebras $$\mathfrak{h}_\Lambda/I_{\theta,\psi}~\cong~ {\Lambda}/(A_{\theta,\psi}).$$ \end{customprop}

The form of this result is well known. It was first proven by Mazur and Wiles in the case when $\psi=\mathbbm{1}$ and $\theta$ is primitive and non-exceptional \cite{MW}.  In \cite{OhtaCM}, Ohta removed the triviality condition on $\psi$, proving the result for pairs of primtive, non-exceptional characters. Unfortunately, his proof requires the Iwasawa main conjecture over $\mathbb{Q}$. Using Katz's p-adic modular forms, Emerton has given a proof of the above isomorphism in the case when $\psi=\mathbbm{1}$ and $\theta$ is a nontrivial power of the Teichm\"{u}ller character \cite{Em}. In fact, his method was the inspiration for the proof of Theorem \ref{jun22-2015-148pm}. The novelty of our approach lies in its simplicity and generality. Specifically, our proof does not require the Iwasawa main conjecture over $\mathbb{Q}$ and makes no restrictions on the characters $\theta$ and $\psi$ apart from those required in the definition of the ordinary $\Lambda$-adic Eisenstein series $\mathcal{E}_{\theta,\psi;1}$.

With the above results in hand we are able to extend Ohta's proof of the main conjecture. Let us give a brief overview of how we will go about doing so. For reasons that will be made clear later, it suffices to construct an unramified pro-$p$ abelian extension $L_\infty$ of $F_\infty$ satisfying the following conditions: 

\begin{enumerate}
\itemsep0em
\item[(H1)] $\Delta$ acts on $\mathrm{Gal}(L_\infty/F_\infty)$ via $\xi_1$,

\item[(H2)] $\mathrm{Char}_{\Lambda_\xi}(\mathrm{Gal}(L_\infty/F_\infty))=\big(F(X,\xi_2^{-1})\big).$

\end{enumerate}

To construct such an extension we will consider the Galois representation arising from the $p$-adic Eichler-Shimura cohomology group of level $N$. Specifically, by applying the method of Kurihara \cite{Kuri} and Harder-Pink \cite{HP} to this representation, we are able to construct a pro-$p$ abelian extension $L/F_\infty$. Without assuming (H1) or (H2) it is possible that this extension is ramified. However, the method of Kurihara and Harder-Pink also supplies us with an embedding of $\mathrm{Gal}(L/F_\infty)$ into the reduction modulo Eisenstein ideal of a particular lattice of the quotient field of Hida's universal ordinary cuspidal Hecke algebra. Through this embedding we are able understand the structure of $\mathrm{Gal}(L/F_\infty)$ as an Iwasawa module. In particular, we can show that $\Delta$ acts on $\mathrm{Gal}(L/F_\infty)$ via $\xi_1$. We then use this structure to determine not only which primes can ramify in the extension $L/F_\infty$, but also how this ramification manifests itself in terms of the characteristic ideal of $\mathrm{Gal}(L/F_\infty)$.

%~~~~~~~~~~~
\begin{customlem}{\ref{jul2-2014-307pm}}[\cite{OhtaCM}, Lemma A.2.1] Let $\ell\neq p$ be a prime and $K_\ell/F_\infty$ the maximal subextension of $L/F_\infty$ in which the primes above $\ell$ are unramified. The Galois group $\mathrm{Gal}(L/K_\ell)$ is a cyclic ${\Lambda_\xi}$-module annihilated by $b_\ell(X):=(1+X)^{s(\ell)}  - \xi_1^{-1}(\ell)\ell.$

\end{customlem}

%~~~~~~~~~
\begin{customlem}{\ref{feb24-2015-257pm}} Let $\ell$ be a prime. If $\ell \nmid N$ or $\xi_2(\ell)$ is not a $p$-power root of unity, then $\ell$ is unramified in $L/F_\infty$.
\end{customlem}

Let $L^\mathrm{un}/F_\infty$ be the maximal unramified subextension of $L/F_\infty$. Using Lemmas \ref{jul2-2014-307pm} and \ref{feb24-2015-257pm} in combination with the theory of Fitting ideals, we will show $\mathrm{Char}_{{\Lambda_\xi}}(\mathrm{Gal}(L^\mathrm{un}/F_\infty)) = (F(X,\xi_2^{-1}))$. With the main conjecture in hand, we conclude by determining the characteristic ideal of $\mathrm{Gal}(L/F_\infty)$.
 
\begin{customlem}{\ref{jun29-2015-313pm}} Let $\tilde{A}$ denote the image of $A_{\theta,\psi}$ under the involution induced by $X\mapsto u^{-1}(1+X)^{-1}-1$. Set $\tilde{A}_0=\tilde{A}/X$ if the pair $(\theta_0,\psi_0)$ is exceptional, with $\tilde{A}_0=\tilde{A}$ otherwise. Then $\mathrm{Char}_{{\Lambda_\xi}}(\mathrm{Gal}(L/F_\infty)) = (\tilde{A}_0)$.
\end{customlem}

 %~~~~~~~~~~~~~~~~
 \subsection{Outline} In Section \ref{secHida} we briefly recall notation and results from the theory of classical modular forms and their Hecke algebras that will be needed in subsequent sections. We then describe the construction of Hida's universal ordinary Hecke algebra. 
 
 In Section \ref{secORD} we recall the definition of ordinary $\Lambda$-adic forms following Ohta \cite{OhtaEC1}. After recording several well known results on the structure of the space of ordinary $\Lambda$-adic modular forms, we introduce $\Lambda$-adic Eisenstein series and prove several results pertaining to these forms. 
 
 In Section \ref{secRES} we introduce the ordinary $\Lambda$-adic cuspidal group and the $\Lambda$-adic residue map. We then compute the image of $\Lambda$-adic Eisenstein series under this map. 
 
 In Section \ref{secDUAL} we prove our refinement of Hida's duality theorem, and use this refinement to give a simple proof of the isomorphism $\mathfrak{h}_\Lambda/I_{\theta,\psi}\cong\Lambda/(A_{\theta,\psi})$.  
 
 Finally, in Section \ref{secIWA} we will use the results of the previous section to extend Ohta's proof of the Iwasawa main conjecture over $\mathbb{Q}$.

%~~~~~~~~~~~~~~~~~~~~~~~~~~~~~~~~~~~~~~~~~~~~
\subsection{Notation and conventions} We fix embeddings $\overline{\mathbb{Q}}\rightarrow \overline{\mathbb{Q}}_p$ and $\overline{\mathbb{Q}}\rightarrow \mathbb{C}$. Through these embeddings we may consider a Dirichlet character as taking values in $\mathbb{C}$ or $\overline{\mathbb{Q}}_p$. For a field $F$, we set $G_{F} = \mathrm{Gal}(\overline{F}/F)$.

For a character $\chi$ and any positive integer $n$, let $\chi_{(n)}$ denote the (possibly imprimitive) character defined modulo $\mathrm{lcm}(M_\chi,n)$ that is induced from the character $\chi$.

For all integers $r\geq 1$ we set $U_r = 1+p^r\mathbb{Z}_p$. Note that $u=1+p$ is a topological generator of $U_1$.

Finally, for a positive integer $M$ we set $$\Gamma_1(M)=\left\{\begin{pmatrix}a & b\\ c & d\end{pmatrix}\in\mathrm{SL}_2(\mathbb{Z}): a,d\equiv1~(\mathrm{mod}\,M),~c\equiv0~(\mathrm{mod}\,M) \right\}.$$ We let $N_r = Np^r$ and $\Gamma_r = \Gamma_1(Np^r)\subset \mathrm{SL}_2(\mathbb{Z})$.

%~~~~~~~~~~~~~~~~~~~~~~~~~~~~~~~~~~~~~~~~~~~~
\subsection{Acknowledgements} The author would like to thank Romyar Sharifi for suggesting this problem, as well as his guidance, insight, and encouragement.

%~~~~~~~~~~~~~~~~~~~~~~~~~~~~~~~~~~~~
%~~~~~~~~~~~~~~~~~~~~~~~~~~~~~~~~~~~~
%~~~~~~~~~~~~~~~~~~~~~~~~~~~~~~~~~~~~
\section{Classical modular forms and Hida's universal ordinary Hecke algebra}\label{secHida}

Let $k$ be a non-negative integer. For a positive integer $r$, we denote the space of holomorphic modular forms (resp., cusp forms) of weight $k$ with respect to $\Gamma_r$ by $M_{k,r}$ (resp., $S_{k,r}$). The weight $k$ action of $\alpha\in\mathrm{GL}_2^+(\mathbb{R})$ on $M_{k,r}$ is defined by $$(f|_k\alpha)(z) ~=~ \det(\alpha)^{k/2}(cz+d)^{-k}\, f(\alpha(z))~~~~~\text{for~}\alpha = \begin{pmatrix} a & b \\ c & d \end{pmatrix}.$$ 

\noindent To make the notation more compact, we will often omit the weight from the notation for this action. We identify each $f\in M_{k,r}$ with its unique $q$-expansion and denote the $n^\mathrm{th}$ coeffeicent of this expansion by $a_n(f)$. We set \begin{align*}M_{k,r,\mathbb{Z}} &= M_{k,r} \cap \mathbb{Z}\llbracket q\rrbracket\\
M_{k,r,\mathcal{O}} &= M_{k,r,\mathbb{Z}} \otimes_\mathbb{Z} \mathcal{O}\end{align*} with $S_{k,r,\mathbb{Z}}$ and $S_{k,r,\mathcal{O}}$ defined analogously.

We now recall the definition of Hecke operators in terms of double cosets. For any $\alpha \in \mathrm{GL}_2^+(\mathbb{Q})$, the double coset $\Gamma_r \alpha\Gamma_r = \coprod_i\Gamma_r \alpha_i$ acts on $M_{k,r}$ as follows: $$f|[\Gamma_r \alpha\Gamma_r] ~:=~ \sum_i f|\alpha_i.$$

\noindent For all integers $n\geq 1$, we denote the operator associated to the double coset $$\Gamma_r\begin{pmatrix}1 & 0 \\ 0 & n\end{pmatrix}\Gamma_r$$ by $T_n$. The operator $T_p$ will be of special significance, and we note that $$\Gamma_r \begin{pmatrix}1 & 0 \\ 0 & p\end{pmatrix}\Gamma_r ~=~ \coprod_{i=0}^{p-1}\Gamma_r\begin{pmatrix}1 & i \\ 0 & p\end{pmatrix}.$$

For $d\in (\mathbb{Z}/N_r\mathbb{Z})^\times$ we define the diamond operator $\langle d\rangle$ (resp., $T_{d,d}$) to be the operator associated to the double coset $\Gamma_r \alpha_d\Gamma_r$ (resp., $\Gamma_r d\alpha_d\Gamma_r$), where $\alpha_d\in \mathrm{SL}_2(\mathbb{Z})$ satisfies $$\alpha_d ~\equiv~\begin{pmatrix}* & * \\ 0 & d\end{pmatrix}~(\mathrm{mod}\,N_r).$$ We extend the definition of these operators to all positive integers $d$ by defining $\langle d\rangle=0=T_{d,d}$ whenever $\gcd(d,N_1)>1$. Having done so, we can describe the action of $T_n$ on $M_{k,r}$ in terms of $q$-expansions: For all integers $m\geq 0$ and $n\geq 1$, $$a_m(f|T_n) ~=~ \sum_{d\mid \gcd(m,n)} a_{mn/d^2}(f|T_{d,d}).$$

 In addition to the operators $T_n$, $T_{d,d}$, and $\langle d\rangle$, we will also consider their adjoints  $T_n^*$, $T_{d,d}^*$, and $\langle d\rangle^*$, which are associated to the double cosets $$\Gamma_r\begin{pmatrix}n & 0 \\ 0 & 1\end{pmatrix}\Gamma_r,$$ $\Gamma_r d\alpha_d^{-1}\Gamma_r$ and $\Gamma_r \alpha_d^{-1}\Gamma_r$, respectively. For future reference, we note that $H^*  = w_{N_r}^{-1} Hw_{N_r}$ for $H=T_n,T_{d,d},$ or $\langle d\rangle$, where $$w_M := \begin{pmatrix}0 & - 1\\ M & 0\end{pmatrix}$$ for all positive integers $M$.

We define $\mathfrak{H}_{k,r}$ (resp., $\mathfrak{h}_{k,r}$) to be the $\mathbb{Z}$-subalgebra of $\mathrm{End}_\mathbb{Z}(M_{k,r})$ (resp., $\mathrm{End}_\mathbb{Z}(S_{k,r})$) generated by the operators $T_n$ and $T_{d,d}$ for all integers $n\geq 1$ and $d\in (\mathbb{Z}/N_r\mathbb{Z})^\times$. Set \begin{align*}\mathfrak{H}_{k,r,\mathcal{O}} &= \mathfrak{H}_{k,r} \otimes_\mathbb{Z} \mathcal{O} \\ \mathfrak{h}_{k,r,\mathcal{O}} &= \mathfrak{h}_{k,r} \otimes_\mathbb{Z} \mathcal{O}.\end{align*} 

\noindent It is well known that $M_{k,r,\mathbb{Z}}$ and $S_{k,r,\mathbb{Z}}$ are stable under the action of $T_n$ and $T_{d,d}$ \cite[\S 1]{Hida86a}. Consequently, $M_{k,r,\mathbb{Z}}$ and $S_{k,r,\mathbb{Z}}$ are modules over $\mathfrak{H}_{k,r,\mathcal{O}}$ and $\mathfrak{h}_{k,r,\mathcal{O}}$, respectively.

We define $\mathfrak{H}_{k,r}^*$ and $\mathfrak{h}_{k,r}^*$ (resp., $\mathfrak{H}_{k,r,\mathcal{O}}^*$ and $\mathfrak{h}_{k,r,\mathcal{O}}^*$) analogously with respect to the adjoint operators $T_n^*$ and $T_{d,d}^*$.

%~~~~~~~~~~~~~~~~~~~~~~~
\subsection{Hida's universal ordinary Hecke algebra}

Let $k\geq 2$ and $r\geq1$. Rather than consider the whole space $M_{k,r,\mathcal{O}}$, we will primarily restrict our considerations to the maximal subspace on which the action of the Hecke operator $T_p$ is invertible. We project to this subspace using Hida's idempotent associated to the operator $T_p$, which we denote by $e$. We will also consider $e^*$ which is defined analogously with respect to the operator $T_p^*$.

 The natural injections \begin{align*}
eM_{k,r,\mathcal{O}} &\hookrightarrow eM_{k,r+1,\mathcal{O}}\\ eS_{k,r,\mathcal{O}} &\hookrightarrow eS_{k,r+1,\mathcal{O}}\end{align*}

\noindent commute with the Hecke action. Therefore, if we restrict the operators of $e\mathfrak{H}_{k,r+1,\mathcal{O}}$ (resp., $e\mathfrak{h}_{k,r+1,\mathcal{O}}$) to the image of $eM_{k,r,\mathcal{O}}$ (resp., $eS_{k,r,\mathcal{O}}$) we obtain surjective $\mathcal{O}$-algbera homomorphisms\begin{align}\label{mar11-2015-222pm}
e\mathfrak{H}_{k,r+1,\mathcal{O}} &\twoheadrightarrow e\mathfrak{H}_{k,r,\mathcal{O}}\\
e\mathfrak{h}_{k,r+1,\mathcal{O}} &\twoheadrightarrow e\mathfrak{h}_{k,r,\mathcal{O}}.
\end{align} 

\begin{definition}[\cite{Hida86b}, (1.2)] The universal ordinary  Hecke algebra $($resp., universal ordinary cuspidal Hecke algebra$)$ of level $N$ over $\mathcal{O}$ is defined by $$\mathfrak{H}_\Lambda = \varprojlim_r e\mathfrak{H}_{k,r,\mathcal{O}}\hspace{.2in}
(\mathrm{resp.,~}\mathfrak{h}_\Lambda= \varprojlim_r e\mathfrak{h}_{k,r,\mathcal{O}}),
$$ where the projective limit is taken with respect to the above restriction maps.
\end{definition}

\noindent Hida has shown that the above projective limits are isomorphic for all $k\geq 2$ \cite[Theorem 1.1]{Hida86b}, which is the reason we omit reference to the weight in the notation. We denote the operators corresponding to the projective limits of $T_n$, $T_{d,d}$, and $\langle d\rangle$ by the same symbols.

Let $$\mathbb{Z}_{p,N} ~=~ \varprojlim_r \mathbb{Z}/Np^r\mathbb{Z}  ~\cong~ (\mathbb{Z}/N\mathbb{Z}) \times  \mathbb{Z}_p.$$

\noindent We identify $\mathcal{O}[\mathbb{Z}_{p,N}^\times]\cong \mathcal{O}[(\mathbb{Z}/Np\mathbb{Z})^\times]\llbracket U_1\rrbracket$ with $\mathcal{O}[(\mathbb{Z}/Np\mathbb{Z})^\times]\llbracket X\rrbracket$ through the isomorphism
\begin{align}\label{mar3-2015-1045pm}\iota&: \mathcal{O}[(\mathbb{Z}/Np\mathbb{Z})^\times]\llbracket U_1\rrbracket\rightarrow \mathcal{O}[(\mathbb{Z}/Np\mathbb{Z})^\times]\llbracket X\rrbracket\end{align} induced by $u\mapsto 1+X$. The Hecke algebras $\mathfrak{H}_\Lambda$ and $\mathfrak{h}_\Lambda$ have a natural $\mathcal{O}[\mathbb{Z}_{p,N}^\times]$-algebra structure, in which any integer $d\in(\mathbb{Z}/Np\mathbb{Z})^\times$ acts on $\mathfrak{H}_\Lambda$ (resp., $\mathfrak{h}_\Lambda$) as $T_{d,d}$.

%~~~~~~
\begin{prop}[\cite{OhtaEC1}, Theorem 1.5.7]\label{mar19-2015-143pm} $\mathfrak{H}_\Lambda$ and $\mathfrak{h}_\Lambda$ are free and finitely generated $\Lambda$-modules.

\end{prop}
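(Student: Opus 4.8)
\textbf{Proof proposal for Proposition \ref{mar19-2015-143pm}.}

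The plan is to reduce the statement about the inverse limit $\mathfrak{H}_\Lambda = \varprojlim_r e\mathfrak{H}_{k,r,\mathcal{O}}$ to finiteness statements at each finite level $r$, and to control how these assemble as $r$ varies. First I would fix a convenient weight, say $k=2$, using Hida's theorem that the limit is independent of $k$. The key input at finite level is Hida's control theorem: the operator $e$ cuts out the ordinary part, and on the ordinary part the natural map $eM_{2,r,\mathcal{O}} \hookrightarrow eM_{2,r+1,\mathcal{O}}$ identifies the source with the submodule of the target killed by an appropriate augmentation-type ideal, and moreover $eM_{2,r,\mathcal{O}}$ (resp.\ $eS_{2,r,\mathcal{O}}$) is free of finite rank over $\mathcal{O}[(\mathbb{Z}/Np^r\mathbb{Z})^\times]$ with rank stabilizing for $r\gg 0$. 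Since $\mathfrak{H}_{k,r,\mathcal{O}}$ acts faithfully on $M_{k,r,\mathcal{O}}$ by construction, each $e\mathfrak{H}_{k,r,\mathcal{O}}$ embeds into $\mathrm{End}_{\mathcal{O}}(eM_{k,r,\mathcal{O}})$, hence is a finitely generated $\mathcal{O}$-module, and in fact a finitely generated module over the group ring $\mathcal{O}[(\mathbb{Z}/Np^r\mathbb{Z})^\times]$ via the diamond operators.

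Next I would pass to the limit. Taking $\varprojlim_r$ of the group rings $\mathcal{O}[(\mathbb{Z}/Np^r\mathbb{Z})^\times]$ gives $\mathcal{O}[\mathbb{Z}_{p,N}^\times] \cong \mathcal{O}[(\mathbb{Z}/Np\mathbb{Z})^\times]\llbracket X\rrbracket$ via the isomorphism $\iota$ of \eqref{mar3-2015-1045pm}; this is a finite free module over $\Lambda = \mathcal{O}\llbracket X\rrbracket$ of rank $|(\mathbb{Z}/Np\mathbb{Z})^\times|$. So it suffices to show $\mathfrak{H}_\Lambda$ is finitely generated and free over this group ring $\mathcal{O}[\mathbb{Z}_{p,N}^\times]$, hence over $\Lambda$. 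Finite generation follows from a compactness/Nakayama argument: the transition maps \eqref{mar11-2015-222pm} are surjective $\mathcal{O}$-algebra homomorphisms compatible with the diamond-operator action, each term is finite over $\mathcal{O}[(\mathbb{Z}/Np^r\mathbb{Z})^\times]$, and one checks that $\mathfrak{H}_\Lambda / (\pi, X)\mathfrak{H}_\Lambda$ — equivalently the reduction mod the maximal ideal of $\Lambda$ — is a finite-dimensional $\mathcal{O}/\pi$-vector space, because it maps onto $e\mathfrak{H}_{2,1,\mathcal{O}}\otimes \mathcal{O}/\pi$ and the control theorem bounds the kernel; a topological Nakayama lemma for the complete local ring $\Lambda$ (or the semilocal ring $\mathcal{O}[\mathbb{Z}_{p,N}^\times]$) then yields a finite generating set, lifting a basis mod the maximal ideal.

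For freeness, the cleanest route is to exhibit $\mathfrak{H}_\Lambda$ as the $\Lambda$-linear dual (or a $\Lambda$-submodule of a dual) of the space $M_\Lambda$ of ordinary $\Lambda$-adic modular forms, which by Hida's $q$-expansion construction is visibly $\Lambda$-torsion-free: a $\Lambda$-adic form is determined by its $q$-expansion coefficients in $\Lambda$, and $\Lambda$ being a regular (in fact UFD) local ring of dimension two, a finitely generated torsion-free module is not automatically free, so instead I would argue that $M_\Lambda$ is free by the control theorem (it specializes in each weight $k\geq 2$ to $eM_{k,r,\mathcal{O}}$, which is free over the finite group ring, and these ranks match), and then invoke Hida's duality pairing $M_\Lambda \times \mathfrak{H}_\Lambda \to \Lambda$ together with finite generation to conclude $\mathfrak{H}_\Lambda$ is reflexive, hence — being finitely generated over the regular local ring $\Lambda$ and of projective dimension zero by the Auslander–Buchsbaum formula once we know it is maximal Cohen–Macaulay, which follows from freeness of $M_\Lambda$ — free. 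I expect the main obstacle to be the freeness assertion rather than finite generation: finite generation is a soft compactness argument, but to get freeness one genuinely needs Hida's control theorem in the strong form (exact control with the ordinary projector, independence of weight, and the perfect duality with $M_\Lambda$) rather than just a rank bound, and marshalling exactly the right version of that input — which is the content of \cite[Theorem 1.5.7]{OhtaEC1} that the proposition cites — is where the real work lies.
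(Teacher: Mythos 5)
The paper does not prove this proposition; it cites it as [OhtaEC1, Theorem 1.5.7], so there is no in-paper argument to compare against. Your sketch reconstructs the standard Hida-theoretic route (exact control plus $a_1$-duality), and the overall architecture is sound. Two things to fix. First, the finite-level freeness is over $\mathcal{O}[U_1/U_r]\cong\Lambda/\big((1+X)^{p^{r-1}}-1\big)$, not over the full group ring $\mathcal{O}[(\mathbb{Z}/Np^r\mathbb{Z})^\times]$: the multiplicity of a given tame character in $eM_{2,r,\mathcal{O}}$ need not be independent of the character, so group-ring freeness over the tame part can fail. Since freeness over $\mathcal{O}[U_1/U_r]$ is what you actually use to pass to $\Lambda$-freeness in the limit, this is a misstatement rather than a fatal gap.

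Second, and more substantively, the closing reflexivity/Auslander--Buchsbaum/MCM excursion is both unnecessary and the place where the write-up reads as circular: you invoke "the perfect duality with $M_\Lambda$" and then confess that this is "the content of [OhtaEC1, Theorem 1.5.7]," i.e.\ the statement being proved. To make the argument self-contained you should spell out the logical order: (i) Hida's exact control for spaces of forms (the paper's Proposition \ref{mar3-2015-324pm}) together with a topological Nakayama argument gives that $M_\Lambda$ is finite and $\Lambda$-free (the paper's Proposition \ref{dec10-209pm}); (ii) the finite-level perfect $\mathcal{O}$-bilinear pairings $eM_{k,r,\mathcal{O}}\times e\mathfrak{H}_{k,r,\mathcal{O}}\to\mathcal{O}$, passed to the limit using (i), give a perfect $\Lambda$-bilinear pairing $M_\Lambda\times\mathfrak{H}_\Lambda\to\Lambda$ \emph{without} presupposing anything about the $\Lambda$-module structure of $\mathfrak{H}_\Lambda$; and then (iii) $\mathfrak{H}_\Lambda\cong\mathrm{Hom}_\Lambda(M_\Lambda,\Lambda)$ is finite free, being the $\Lambda$-dual of a finite free $\Lambda$-module. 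No depth or projective-dimension computation is needed once you have (i) and (ii). The identical steps with $S_\Lambda$ in place of $M_\Lambda$ (cf.\ Theorem \ref{feb6-2015-1235pm}) give the statement for $\mathfrak{h}_\Lambda$.
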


We have the following commutative diagram $$\begin{CD}e\mathfrak{H}_{k,r+1,\mathcal{O}} @>\sim>> e^*\mathfrak{H}^*_{k,r+1,\mathcal{O}}\\
@V\mathrm{res}VV @VV\mathrm{res}V\\
e\mathfrak{H}_{k,r,\mathcal{O}} @>\sim>> e^*\mathfrak{H}^*_{k,r,\mathcal{O}} \end{CD}$$ where the horizontal maps are induced by $T_n\mapsto T_n^*$ and the vertical maps are restriction (\ref{mar11-2015-222pm}).  From these isomorphisms we construct the adjoint universal ordinary Hecke algebra $\mathfrak{H}^*_\Lambda$. We construct $\mathfrak{h}^*_\Lambda$ analogously.

%~~~~~~~~~~~~~~~~~~~~~~~~~~~~~~~~~~~~~~~~~~~~~~~~~~~~~~~~
\section{Ordinary $\Lambda$-adic modular forms}\label{secORD}

In this section we recall the definition of ordinary $\Lambda$-adic modular forms following Ohta \cite{OhtaEC1}. We then introduce $\Lambda$-adic Eisenstein series and record several results pertaining to these forms.

%~~~~~~~
\subsection{Ordinary $\Lambda$-adic modular forms} Denote the group of continuous $\overline{\mathbb{Q}}^\times$-valued characters on $U_1/U_r$ by $\widehat{U_1/U_r}$, and define $$\widehat{U}_{1}~=~\bigcup_{r\geq 1} \widehat{U_1/U_r}.$$

\noindent We will always assume that the characters $\epsilon\in \widehat{U}_1$ are primitive. For $\epsilon\in \widehat{U}_{1}$ we define
\begin{align*}
eM_{k,r,\mathcal{O},\epsilon} ~=~ \{f\in eM_{k,r,\mathcal{O}[\epsilon]}: f|\sigma_\alpha = \epsilon(\alpha)f~\mathrm{for~all~}\alpha\in U_1  \},
\end{align*}

\noindent where $\sigma_\alpha \in \Gamma_1$ is a matrix satisfying
\begin{align}\label{feb14-2014-308pm}
\sigma_\alpha ~\equiv~ \begin{pmatrix}\alpha^{-1} & * \\ 0 & \alpha\end{pmatrix}~(\mathrm{mod}\,p^r).
\end{align}

\noindent We define $S_{k,r,\mathcal{O},\epsilon}$ analogously.

%~~~
\begin{definition} An ordinary $\Lambda$-adic modular form $($resp., cusp form$)$ $F$ of level $N$ is a formal $q$-expansion $$F ~=~ \sum_{n=0}^\infty a_n(F)(X) q^n ~\in~\Lambda\llbracket q\rrbracket$$ such that $$v_{k,\epsilon}(F) ~:=~ \sum_{n=0}^\infty a_n(F)(\epsilon(u)u^{k-2}-1) q^n$$ 

\noindent is an element of $eM_{k,r,\mathcal{O},\epsilon}$ $($resp., $eS_{k,r,\mathcal{O},\epsilon})$ for all  $k\geq 2$ and $\epsilon\in \widehat{U}_{1}$. Here the power of $p$ appearing in the level $N_r$ is determined by $\ker(\epsilon)=U_r$. We denote the space of ordinary $\Lambda$-adic modular forms $($resp., cusp forms$)$ of level $N$ by $M_\Lambda$ $($resp., $S_\Lambda)$.\end{definition}

The space of ordinary $\Lambda$-adic modular forms has a very nice structure which we now recall.

%%%%
\begin{prop}[\cite{HidaBB}, \S 7.3 Theorem 1]\label{dec10-209pm} The $\Lambda$-modules $M_\Lambda$ and $S_\Lambda$ are free and finitely generated.

\end{prop}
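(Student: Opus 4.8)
The plan is to reduce the statement to Hida's control theorem, which identifies the specialization $v_{k,\epsilon}(F)$ of an ordinary $\Lambda$-adic form with a classical ordinary form in $eM_{k,r,\mathcal{O},\epsilon}$ of bounded $\mathcal{O}$-rank. First I would show that $M_\Lambda$ is torsion-free over $\Lambda=\mathcal{O}\llbracket X\rrbracket$: since $\Lambda$ is a domain and $M_\Lambda\subset\Lambda\llbracket q\rrbracket$ consists of $q$-expansions with coefficients in $\Lambda$, multiplication by a nonzero element of $\Lambda$ is injective on $\Lambda\llbracket q\rrbracket$, hence on $M_\Lambda$. Next I would establish that $M_\Lambda$ is finitely generated. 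The key input is that for a fixed weight, say $k=2$, and for suitable primitive $\epsilon$ of $p$-power order (equivalently, for suitable arithmetic height-one primes $P_{k,\epsilon}=(X-(\epsilon(u)u^{k-2}-1))$ of $\Lambda$), the specialization map $M_\Lambda/P_{k,\epsilon}M_\Lambda\to eM_{k,r,\mathcal{O},\epsilon}$ is injective (by the $q$-expansion principle: a form in the kernel has all coefficients divisible by the corresponding linear polynomial, hence lies in $P_{k,\epsilon}M_\Lambda$ using torsion-freeness) with image of $\mathcal{O}[\epsilon]$-rank bounded independently of $\epsilon$ — this uniform bound is exactly Hida's vertical control result, which bounds $\dim_{\mathcal{O}} eM_{k,r,\mathcal{O}}$ in terms of $eM_{k,1,\mathcal{O}}$ and, ultimately, the rank of $\mathfrak{H}_\Lambda$ over $\Lambda$ from Proposition \ref{mar19-2015-143pm}.

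From a uniform bound on the ranks of the fibers, I would conclude finite generation via a standard Nakayama/approximation argument: choose finitely many arithmetic primes and elements $F_1,\dots,F_d\in M_\Lambda$ whose images span $M_\Lambda/P_{k,\epsilon}M_\Lambda$ for a well-chosen $(k,\epsilon)$ (so $d$ equals the generic rank), and show the $F_i$ generate $M_\Lambda$ by a coefficient-by-coefficient successive-approximation argument in the $(\pi,X)$-adic topology on $\Lambda\llbracket q\rrbracket$: given $F\in M_\Lambda$, solve for a $\Lambda$-linear combination matching $F$ modulo $(\pi,X)^m$ for each $m$, using that the quotients $M_\Lambda/(\pi,X)^m$ are controlled by finitely many classical spaces. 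Alternatively, and perhaps more cleanly, I would invoke Hida's duality $M_\Lambda\cong\mathrm{Hom}_\Lambda(\mathfrak{H}_\Lambda,\Lambda)$ in an appropriate form together with Proposition \ref{mar19-2015-143pm}; but since the paper's whole point is a \emph{refinement} of that duality, I'd prefer the self-contained control-theorem route so as not to reason circularly.

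Having finite generation and torsion-freeness, freeness follows because $\Lambda$ is a regular local ring of dimension $2$: a finitely generated torsion-free module over such a ring need not be free in general, so here I would instead use reflexivity/depth. The cleanest argument: $M_\Lambda$ is a finitely generated $\Lambda$-module which is $\Lambda$-free after inverting $\pi$ and after inverting $X$ is not quite enough — rather, I would argue that $M_\Lambda$ has depth $2$ over the two-dimensional regular local ring $\Lambda$ (equivalently, is Cohen--Macaulay of maximal dimension), hence free by Auslander--Buchsbaum. Depth $\geq 1$ comes from torsion-freeness ($X$ or $\pi$ is a nonzerodivisor); depth $\geq 2$ requires showing that after killing one such element the quotient is still torsion-free over $\Lambda/(X)\cong\mathcal{O}$ or that a regular sequence of length two exists — here one uses that $M_\Lambda/XM_\Lambda$ injects into a classical space of forms over $\mathcal{O}$, which is $\mathcal{O}$-free, hence $\pi$-torsion-free. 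This gives a regular sequence $(X,\pi)$ on $M_\Lambda$, so $\mathrm{depth}_\Lambda M_\Lambda=2=\dim\Lambda$, and Auslander--Buchsbaum yields projective dimension $0$, i.e. $M_\Lambda$ is free. The same argument applies verbatim to $S_\Lambda$. The main obstacle is the uniform rank bound on the classical fibers $eM_{k,r,\mathcal{O},\epsilon}$ as $\epsilon$ varies — this is the technical heart of Hida theory (vertical control), and I would simply cite it from \cite{HidaBB} rather than reprove it; everything else is commutative algebra (Auslander--Buchsbaum) plus the $q$-expansion principle.
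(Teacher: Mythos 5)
The paper gives no proof of this proposition at all: it is cited as a black box from Hida's book [HidaBB, \S 7.3, Theorem 1], and the paper's logical development simply takes it as known. So I cannot compare your argument against ``the paper's proof''; I can only assess it on its own terms and against what Hida actually does. My understanding is that Hida's proof of this theorem goes through the perfect pairing $M_\Lambda\times\mathfrak{H}_\Lambda\to\Lambda$ and the $\Lambda$-freeness of $\mathfrak{H}_\Lambda$ (so that $M_\Lambda$, as a $\Lambda$-dual of a finitely generated torsion-free module, is reflexive, and reflexive finitely generated modules over the two-dimensional regular local ring $\Lambda$ are free). You deliberately avoid that route, and your alternative plan --- torsion-freeness, finite generation via control, then a regular sequence $(X,\pi)$ plus Auslander--Buchsbaum --- is sound in outline and is a genuinely different argument. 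In particular, the depth-two/Auslander--Buchsbaum step is exactly the right commutative algebra, and you are correct that mere torsion-freeness over a two-dimensional regular local ring is not enough.

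There are, however, two places where you are gliding past real content. First, the injectivity of $M_\Lambda/P_{k,\epsilon}M_\Lambda\to eM_{k,r,\mathcal{O},\epsilon}$ is not just the $q$-expansion principle plus torsion-freeness: if $v_{k,\epsilon}(F)=0$, you do get $F=P_{k,\epsilon}G$ with $G\in\Lambda\llbracket q\rrbracket$, but you must then verify that $G$ is itself a $\Lambda$-adic form, and this is only clear for specializations $(k',\epsilon')\neq(k,\epsilon)$; the remaining specialization requires a closure/limiting argument for classical spaces (this is one of the things buried in Proposition \ref{mar3-2015-324pm}, which the paper attributes to Ohta and which you should be wary of invoking, since Ohta's proof is downstream of exactly the kind of structure theorem you are trying to establish). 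Second, the finite-generation step is more delicate than ``successive approximation'': your argument needs that $M_\Lambda/(\pi,X)^mM_\Lambda$ is generated by lifts of a spanning set of $M_\Lambda/(\pi,X)M_\Lambda$, which is a Nakayama statement over the Artinian quotients $\Lambda/(\pi,X)^m$, plus the separatedness of $\Lambda\llbracket q\rrbracket$ to pass to the limit; this can be made to work, but it is not a one-liner. A cleaner and more standard route to finite generation, which avoids the approximation entirely, is to use the rank bound to choose finitely many indices $n_1,\dots,n_d$ and show the coefficient map $F\mapsto(a_{n_1}(F),\dots,a_{n_d}(F))$ embeds $M_\Lambda$ into $\Lambda^d$; Noetherianity of $\Lambda$ then gives finite generation immediately, and you can feed that into your depth argument. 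I'd recommend reorganizing along those lines, both because it is tighter and because it makes the only genuine input the bound on $\dim_{\mathcal{O}[\epsilon]}eM_{k,r,\mathcal{O},\epsilon}$, which is safe to cite from Hida without risking circularity with Proposition \ref{mar3-2015-324pm}.
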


%~~~~~~~~~
\begin{prop}[\cite{OhtaEC1} Proposition 2.5.1, \cite{OhtaES} Proposition 2.6.4]\label{mar3-2015-324pm} For each $k\geq 2$ and $\epsilon\in \widehat{U}_1$, let $P_{k,\epsilon}:= X - \epsilon(u)u^{k-2}+1$. Then
\begin{align*}
M_{\Lambda} / P_{k,\epsilon}M_{\Lambda}~\cong~ eM_{k,r,\mathcal{O},\epsilon}\\
S_{\Lambda} / P_{k,\epsilon}S_{\Lambda}~\cong~ eS_{k,r,\mathcal{O},\epsilon}.
\end{align*} 

\end{prop}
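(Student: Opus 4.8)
The plan is to prove the two isomorphisms together, since the cuspidal statement is obtained from the modular one by restricting the specialization map to $q$-expansions supported away from the constant term; I will focus on $M_\Lambda$. First I would recall the specialization map $v_{k,\epsilon}\colon M_\Lambda\to eM_{k,r,\mathcal{O},\epsilon}$, $F\mapsto\sum_n a_n(F)(\epsilon(u)u^{k-2}-1)q^n$, which is well defined precisely by the definition of $M_\Lambda$, and observe that it is $\Lambda$-linear if we let $X$ act on the target by multiplication by $\epsilon(u)u^{k-2}-1$; equivalently, $v_{k,\epsilon}$ is reduction modulo $P_{k,\epsilon}=X-(\epsilon(u)u^{k-2}-1)$. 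Thus $v_{k,\epsilon}$ factors through $M_\Lambda/P_{k,\epsilon}M_\Lambda$, and the first task is to show this induced map is injective, the second is to show it is surjective.

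For injectivity: since $M_\Lambda$ is a free $\Lambda$-module of finite rank by Proposition \ref{dec10-209pm}, and $P_{k,\epsilon}$ is a height-one prime of $\Lambda=\mathcal{O}\llbracket X\rrbracket$ (a distinguished polynomial up to a unit, after absorbing the unit $\epsilon(u)u^{k-2}-1$ appropriately — more precisely $P_{k,\epsilon}$ is, up to the unit $-1$ and a translation, the kind of element $X-\zeta$ with $\zeta$ in the maximal ideal, so $\Lambda/P_{k,\epsilon}\cong\mathcal{O}[\epsilon]$ or a finite extension), it suffices to show that an $F\in M_\Lambda$ with $v_{k,\epsilon}(F)=0$ lies in $P_{k,\epsilon}M_\Lambda$. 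Concretely, $v_{k,\epsilon}(F)=0$ means $P_{k,\epsilon}\mid a_n(F)(X)$ in $\Lambda$ for every $n$; writing $a_n(F)=P_{k,\epsilon}\,b_n$ with $b_n\in\Lambda$, I would need to know that $G:=\sum_n b_n q^n$ again lies in $M_\Lambda$. This is the crux: one checks $v_{k',\epsilon'}(G)\in eM_{k',r',\mathcal{O},\epsilon'}$ for all $(k',\epsilon')$. Away from the finitely many specialization points where $P_{k,\epsilon}$ vanishes this follows because $v_{k',\epsilon'}(G)=v_{k',\epsilon'}(F)/(\epsilon'(u)u^{k'-2}-\epsilon(u)u^{k-2})$ is a scalar multiple of a classical form; the finitely many remaining points are handled by a standard density/continuity argument (the classical subspaces $eM_{k',r',\mathcal{O},\epsilon'}$ are $\mathcal{O}$-lattices cut out by the $q$-expansion principle, and $G$ being a $q$-expansion limit of classical forms is itself classical) — this is exactly the content underlying the freeness result of Hida and Ohta and is where I expect to lean hardest on the cited references.

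For surjectivity: I would argue that $v_{k,\epsilon}\colon M_\Lambda\to eM_{k,r,\mathcal{O},\epsilon}$ is surjective directly. Given a classical form $f\in eM_{k,r,\mathcal{O},\epsilon}$, Hida's theory produces an ordinary $\Lambda$-adic form interpolating it (each Hecke eigenform in the ordinary part lies on a Hida family); alternatively, combine injectivity with a rank count. Indeed, once injectivity is known, $M_\Lambda/P_{k,\epsilon}M_\Lambda$ embeds into $eM_{k,r,\mathcal{O},\epsilon}$ as an $\mathcal{O}$-module of rank equal to $\mathrm{rank}_\Lambda M_\Lambda$; Hida's control theorem (or the comparison $\mathfrak{H}_\Lambda/P_{k,\epsilon}\cong e\mathfrak{H}_{k,r,\mathcal{O}}$ together with the fact that both $M_\Lambda$ and $eM_{k,r,\mathcal{O},\epsilon}$ are faithful modules of the same rank over their respective Hecke algebras) gives that $\mathrm{rank}_{\mathcal{O}[\epsilon]} eM_{k,r,\mathcal{O},\epsilon}$ equals $\mathrm{rank}_\Lambda M_\Lambda$, forcing the embedding to be an isomorphism after inverting $\pi$; finally $\mathcal{O}$-freeness of the target and the fact that $M_\Lambda/P_{k,\epsilon}M_\Lambda$ is $\pi$-torsion-free (as $M_\Lambda$ is $\Lambda$-free and $\Lambda/P_{k,\epsilon}$ is $\mathcal{O}$-flat... caution: need $\Lambda/P_{k,\epsilon}$ a domain, which holds) upgrades this to an honest isomorphism. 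The cuspidal case is identical, replacing $eM_{k,r,\mathcal{O},\epsilon}$ by $eS_{k,r,\mathcal{O},\epsilon}$ and using that the specialization map respects the $a_0=0$ condition (the constant term of $v_{k,\epsilon}(F)$ is $a_0(F)(\epsilon(u)u^{k-2}-1)$, which vanishes for all $(k,\epsilon)$ iff $a_0(F)=0$). The main obstacle, as indicated, is the control-theorem input ensuring that the "quotient form" $G$ above is again $\Lambda$-adic and that ranks match on both sides; this is precisely what the references \cite{OhtaEC1} and \cite{OhtaES} establish, so in the write-up I would cite them for that step rather than reprove Hida's vertical control theorem.
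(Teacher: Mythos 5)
The paper itself gives no proof of this proposition: it is stated and attributed directly to \cite{OhtaEC1} (Proposition 2.5.1) and \cite{OhtaES} (Proposition 2.6.4), so there is no in-paper argument to compare against. Your sketch is in the right spirit --- it correctly identifies $v_{k,\epsilon}$ as reduction modulo $P_{k,\epsilon}$, correctly isolates the injectivity crux (showing that the coefficientwise quotient $G$ with $P_{k,\epsilon}G = F$ again lies in $M_\Lambda$), and correctly flags that the hard input is Hida's vertical control theorem, which is precisely what the cited references establish.

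There is, however, a genuine logical slip in the ``alternative'' surjectivity branch. You argue: injectivity plus a rank count gives an isomorphism after inverting $\pi$, and then ``$\mathcal{O}$-freeness of the target and $\pi$-torsion-freeness of $M_\Lambda/P_{k,\epsilon}M_\Lambda$'' upgrade this to an honest isomorphism. That inference is false: an injection of free $\mathcal{O}[\epsilon]$-modules of the same finite rank need not be surjective even though both sides are $\pi$-torsion-free and the map is an isomorphism after inverting $\pi$ --- e.g.\ $\pi\mathcal{O}\hookrightarrow\mathcal{O}$. What you actually need is that the image of $M_\Lambda/P_{k,\epsilon}M_\Lambda$ is $\pi$-saturated in $eM_{k,r,\mathcal{O},\epsilon}$, and that is equivalent to surjectivity itself; it does not follow from freeness and rank equality. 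So the rank-count branch should be dropped: the surjectivity has to come from the control theorem directly (the statement that $v_{k,\epsilon}\colon M_\Lambda\to eM_{k,r,\mathcal{O},\epsilon}$ is onto, or equivalently the dual statement $\mathfrak{H}_\Lambda/P_{k,\epsilon}\mathfrak{H}_\Lambda\xrightarrow{\sim}e\mathfrak{H}_{k,r,\mathcal{O},\epsilon}$ together with Hida duality), exactly as in the cited references. The rest of your plan --- the $q$-expansion principle for specializations away from $(k,\epsilon)$ and the reduction of the cuspidal case to the modular one via the constant-term criterion --- is fine.
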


%~~~~~~~~~
\begin{cor}\label{mar3-2015-332pm} We have \begin{align*}M_{\Lambda} &~\cong~ M_{ \mathbb{Z}_p\llbracket X\rrbracket}\otimes_{ \mathbb{Z}_p\llbracket X\rrbracket} \Lambda\\
S_{\Lambda} &~\cong~ S_{ \mathbb{Z}_p\llbracket X\rrbracket}\otimes_{ \mathbb{Z}_p\llbracket X\rrbracket} \Lambda
\end{align*} \end{cor}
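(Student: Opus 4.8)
The plan is to deduce Corollary \ref{mar3-2015-332pm} as a base-change statement from Proposition \ref{mar3-2015-324pm}, using the freeness asserted in Proposition \ref{dec10-209pm}. The essential point is that the formation of $M_\Lambda$ and $S_\Lambda$ is compatible with extending the coefficient ring $\mathcal{O}$, and that $\Lambda = \mathcal{O}\llbracket X\rrbracket$ is faithfully flat (indeed free) over $\mathbb{Z}_p\llbracket X\rrbracket$ since $\mathcal{O}$ is free over $\mathbb{Z}_p$ (being the ring of integers of a finite — or at worst a completion, but still free — extension; in any case a $q$-expansion lives in $\Lambda\llbracket q\rrbracket$ and the coefficients are controlled). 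Concretely, there is a natural $\Lambda$-linear map
\[
M_{\mathbb{Z}_p\llbracket X\rrbracket}\otimes_{\mathbb{Z}_p\llbracket X\rrbracket}\Lambda \longrightarrow M_\Lambda,\qquad F\otimes\lambda\mapsto \lambda F,
\]
obtained by reading the $q$-expansion of an element of $M_{\mathbb{Z}_p\llbracket X\rrbracket}$ inside $\Lambda\llbracket q\rrbracket$, and I would show this map is an isomorphism; the same argument applies verbatim to the cuspidal spaces.

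First I would check that the map is well defined and $\Lambda$-linear, which is immediate from the definition of $\Lambda$-adic forms as $q$-expansions: multiplying a $\Lambda$-adic form over $\mathbb{Z}_p\llbracket X\rrbracket$ by a scalar in $\Lambda$ preserves the defining specialization conditions, because $eM_{k,r,\mathcal{O},\epsilon} = eM_{k,r,\mathbb{Z}_p,\epsilon}\otimes_{\mathbb{Z}_p}\mathcal{O}$ (the classical integral spaces base-change along $\mathbb{Z}_p\to\mathcal{O}$, as recorded when $M_{k,r,\mathcal{O}}$ was defined in Section \ref{secHida}). Next, to prove injectivity and surjectivity, I would invoke Proposition \ref{mar3-2015-324pm}: specializing at the primes $P_{k,\epsilon}$ identifies both sides, after quotienting, with $eM_{k,r,\mathcal{O},\epsilon}\cong eM_{k,r,\mathbb{Z}_p,\epsilon}\otimes_{\mathbb{Z}_p}\mathcal{O}$. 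Since $M_\Lambda$ and $M_{\mathbb{Z}_p\llbracket X\rrbracket}$ are both free of finite rank over their respective Iwasawa algebras (Proposition \ref{dec10-209pm}), and the ranks agree (they can be read off from $eM_{k,r,\overline{\mathbb{Q}}_p,\epsilon}$, which is independent of the coefficient field), the source of the map is also free of the same rank over $\Lambda$. A $\Lambda$-linear map between free $\Lambda$-modules of the same finite rank that becomes an isomorphism modulo the height-one prime $P_{k,\epsilon}$ — equivalently, whose determinant is a unit at $P_{k,\epsilon}$ — is enough to conclude; alternatively, and more robustly, I would argue that the map is surjective modulo the maximal ideal $\mathfrak{m}$ of $\Lambda$ (using density of the $P_{k,\epsilon}$ or a direct $q$-expansion argument over the residue field) and then apply Nakayama to get surjectivity, after which equality of ranks forces injectivity (a surjection of free modules of equal finite rank over a Noetherian local — or even just a domain — ring is an isomorphism).

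The cleanest route, which I would actually write, is this: by Proposition \ref{mar3-2015-324pm} applied over both $\mathcal{O}$ and $\mathbb{Z}_p$, for every $(k,\epsilon)$ we have
\[
\bigl(M_{\mathbb{Z}_p\llbracket X\rrbracket}\otimes_{\mathbb{Z}_p\llbracket X\rrbracket}\Lambda\bigr)/P_{k,\epsilon}
~\cong~ eM_{k,r,\mathbb{Z}_p,\epsilon}\otimes_{\mathbb{Z}_p}\mathcal{O}
~\cong~ eM_{k,r,\mathcal{O},\epsilon}
~\cong~ M_\Lambda/P_{k,\epsilon}M_\Lambda,
\]
compatibly with the natural map. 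Since the $P_{k,\epsilon}$ for varying $(k,\epsilon)$ are distinct height-one primes of $\Lambda$ (their common zero locus is empty — they are coprime in a suitable sense, a standard fact about Iwasawa algebras), their intersection is zero, so a $\Lambda$-linear map of finitely generated $\Lambda$-modules that is an isomorphism modulo each $P_{k,\epsilon}$ and whose source and target are both free must be an isomorphism; indeed its cokernel is a finitely generated $\Lambda$-module killed by quotienting at every $P_{k,\epsilon}$, hence supported in codimension $\geq 2$, while being a quotient of free modules of equal rank forces it to be torsion, and a torsion-free-plus-Nakayama argument kills it. The analogous assertion for $S_\Lambda$ is identical. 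The main obstacle, and the point requiring the most care, is the last step: making the passage from "isomorphism after all the specializations $P_{k,\epsilon}$" to "isomorphism" fully rigorous. I expect the smoothest justification is Nakayama at the maximal ideal $\mathfrak{m}\subset\Lambda$: one shows the natural map is surjective mod $\mathfrak{m}$ by exhibiting enough $q$-expansions (e.g.\ specializing a single well-chosen $P_{k,\epsilon}$ with $k\equiv\ast$, $\epsilon$ trivial so that $P_{k,\epsilon}\in\mathfrak{m}$, and using that the classical integral Hecke modules base-change), deduce surjectivity over $\Lambda$, and then conclude it is an isomorphism because it is a surjection of free $\Lambda$-modules of the same finite rank. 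I would double-check that the $\Lambda$-rank of $M_{\mathbb{Z}_p\llbracket X\rrbracket}\otimes\Lambda$ really equals that of $M_\Lambda$ — this is where "$\mathcal{O}$ free over $\mathbb{Z}_p$" and the coefficient-field-independence of $\dim eM_{k,r,\cdot,\epsilon}$ are used — since the whole argument collapses if the ranks differ.
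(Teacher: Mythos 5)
The paper states this corollary without proof, and the argument you outline is clearly the intended one: read the isomorphism off from the freeness in Proposition~\ref{dec10-209pm} together with the control theorem in Proposition~\ref{mar3-2015-324pm}. Your ``cleanest route'' paragraph is correct and is what should actually be written: specialize at a single $P_{k,\mathbbm{1}} = X - u^{k-2}+1 \in \mathfrak{m}$ (trivial $\epsilon$, so no roots of unity and no coefficient extension are needed), deduce surjectivity of the natural map $M_{\mathbb{Z}_p\llbracket X\rrbracket}\otimes_{\mathbb{Z}_p\llbracket X\rrbracket}\Lambda\to M_\Lambda$ by Nakayama, and conclude it is an isomorphism because it is a surjection of free $\Lambda$-modules of the same finite rank over the domain $\Lambda$. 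The only points worth cleaning up are (i) the ``supported in codimension $\geq 2$, hence torsion, hence zero'' detour in the middle paragraph, which is muddled (being killed after reduction at each $P_{k,\epsilon}$ does not by itself place the support in codimension $\geq 2$) and which you already correctly discard in favor of Nakayama, and (ii) the appeal to ``$\mathcal{O}$ free over $\mathbb{Z}_p$,'' which is more than the paper grants you ($\mathcal{O}$ is only assumed to be the integer ring of a complete discretely valued subfield of $\mathbb{C}_p$, possibly infinite over $\mathbb{Z}_p$); flatness of $\mathcal{O}$ over $\mathbb{Z}_p$, automatic since $\mathcal{O}$ is torsion-free over a PID, is all that the base-change identity $eM_{k,r,\mathcal{O}} \cong eM_{k,r,\mathbb{Z}_p}\otimes_{\mathbb{Z}_p}\mathcal{O}$ and the rank comparison actually use.
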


In \cite[\S 2.3]{OhtaES} and \cite[\S 2.2]{OhtaEC1}, Ohta shows that the space of ordinary $\Lambda$-adic modular forms is isomorphic to a projective system of classical modular forms. The latter has a natural $\mathfrak{H}^*_\Lambda$-module structure, and through this isomorphism $M_\Lambda$ is endowed with an $\mathfrak{H}_\Lambda$-module structure. In particular, for all $F\in M_\Lambda$ we have
\begin{align*}
v_{k,\epsilon}(F|T_n) &~=~ v_{k,\epsilon}(F)|T_n\\
v_{k,\epsilon}(F|T_{d,d}) &~=~ v_{k,\epsilon}(F)|T_{d,d}\end{align*}

\noindent for all $k\geq 2$ and $\epsilon\in \widehat{U}_{1}$.

%~~~~~~~~~~~~~~~~~~~~~~~~~~~~~~~~~~~~~~~~~~~~~~~~~~~~~~~~
\subsection{\texorpdfstring{$\Lambda$}{Lambda}-adic Eisenstein series}

In this subsection we assume that $\mathcal{O}$ contains the values of $\theta$ and $\psi$.	Let $[\cdot]:\mathbb{Z}_p^\times \rightarrow U_1$ be the projection defined by $[a] = a\omega(a)^{-1}$, and let $s:\mathbb{Z}_p^\times\rightarrow \mathbb{Z}_p$ be the group homomorphism defined by $[a] = u^{s(a)}$. For a Dirichlet character $\varphi$ with conductor not divisible by $p^2$, set $G(X,\varphi\omega^2) = F(u^{-1}(1+X)^{-1}-1,\varphi\omega^2)$. Note that $$G(\epsilon(u)u^{k-2}-1,\varphi\omega^2) ~=~ L_p(1-k,\varphi\omega^2\epsilon) ~=~ L(1-k,(\varphi\omega^{2-k}\epsilon)_{(p)}),$$ for all $k\geq 2$ and $\epsilon\in \widehat{U}_1$, where $L(s,\chi)$ is the Dirichlet $L$-function associated to the character $\chi$ \cite[(2.3.6)]{OhtaEC1}.

For all integers $t\geq 1$ we define the following formal series in $\Lambda\llbracket q\rrbracket$:
\begin{align*}
\mathcal{E}_{\theta,\psi;t}= \delta_{\theta,\psi}(X)\left(\displaystyle{\frac{\psi(0) G(X,\theta\omega^2)}{2} + \sum_{n=1}^\infty \left(\sum_{\substack{0<d\mid n \\ p\nmid d}}\theta(d)\psi\!\left(\frac{n}{d}\right)(1+X)^{s(d)}d\right)q^{tn}}\right).
\end{align*}

\noindent We set $\mathcal{E}_{\theta,\psi}=\mathcal{E}_{\theta,\psi;1}$.

%~~~~
\begin{theorem}[\cite{OhtaCM} \S 1.4, \cite{OhtaEC1} \S2.4]\label{dec31-2013-205pm} The power series $\mathcal{E}_{\theta_0,\psi_0;t}$ is an element of $M_\Lambda$ if the following conditions are satisfied:

\begin{enumerate}[topsep=.1em,itemsep=-.2em]
\item[$\mathrm{(1)}$]~~$p\nmid t$
\item[$\mathrm{(2)}$]~~$f_{\theta} f_{\psi} t\mid Np$

\item[$\mathrm{(3)}$]~~$(f_{\psi},p)=1$

\item[$\mathrm{(4)}$]~~$(\theta_0\psi_0)(-1)=1.$
\end{enumerate}

\noindent For all $k\geq 2$ and $\epsilon\in\widehat{U}_1$, we have

\begin{center}
$\displaystyle{ v_{k,\epsilon}(\mathcal{E}_{\theta_0,\psi_0;t}) ~=~\delta_{\theta,\psi}(\epsilon(u)u^{k-2}-1)   E_k((\theta_0\epsilon\omega^{2-k})_{(p)},\psi_0;t),   }$
\end{center}

\noindent where $E_k((\theta_0\epsilon\omega^{2-k})_{(p)},\psi_0;t)$ is the classical $p$-stabilized Eisenstein series of weight $k$ and level $f_\theta f_\psi p^rt/\gcd(f_\theta,p)$ having Nebentypus $\theta_0\psi_0\epsilon\omega^{2-k}$. 

Furthermore, $M_\Lambda \otimes_\Lambda Q(\Lambda)$ is spanned over $Q(\Lambda)$ by $S_\Lambda$ and the set of Eisenstein series $\mathcal{E}_{\theta_0,\psi_0;t}$ satisfying the above conditions.

\end{theorem}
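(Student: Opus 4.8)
The plan is to prove the three assertions---membership in $M_\Lambda$, the interpolation formula, and the spanning statement---by reducing everything to classical Eisenstein series via their $q$-expansions. Fix $k\geq 2$ and a primitive $\epsilon\in\widehat{U}_1$ with $\ker\epsilon=U_r$. For an integer $d$ prime to $p$, specializing $(1+X)^{s(d)}$ at $X=\epsilon(u)u^{k-2}-1$ yields $\bigl(\epsilon(u)u^{k-2}\bigr)^{s(d)}=\epsilon([d])\,[d]^{k-2}=\epsilon(d)\,\omega(d)^{2-k}d^{k-2}$, where $\epsilon$ and $\omega$ are viewed as Dirichlet characters modulo $p^r$ and $p$ via $d\mapsto[d]$. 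Hence, for $n\geq 1$, the coefficient of $q^{tn}$ in $v_{k,\epsilon}(\mathcal E_{\theta_0,\psi_0;t})$ equals $\delta_{\theta,\psi}(\epsilon(u)u^{k-2}-1)$ times $\sum_{0<d\mid n,\ p\nmid d}(\theta_0\epsilon\omega^{2-k})(d)\,\psi_0(n/d)\,d^{k-1}$, which is precisely the $n$-th Fourier coefficient of the classical $p$-stabilized Eisenstein series $E_k\bigl((\theta_0\epsilon\omega^{2-k})_{(p)},\psi_0;t\bigr)$, the imprimitivization $(\,\cdot\,)_{(p)}$ being what cuts the divisor sum down to $p\nmid d$. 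For the constant term I would feed in the interpolation property recorded just before the statement, $G(\epsilon(u)u^{k-2}-1,\theta_0\omega^2)=L_p(1-k,\theta_0\omega^2\epsilon)=L\bigl(1-k,(\theta_0\epsilon\omega^{2-k})_{(p)}\bigr)$, and compare with the classical constant-term formula: the $p$-stabilization introduces exactly the Euler factor converting the primitive Dirichlet $L$-value into the imprimitive one, while the prefactor $\psi_0(0)/2$ reflects that the constant term is nonzero only when $\psi_0=\mathbbm{1}$. This yields the displayed identity for $v_{k,\epsilon}$.

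Granting that identity, concluding $\mathcal E_{\theta_0,\psi_0;t}\in M_\Lambda$ requires two checks. First, all Fourier coefficients must lie in $\Lambda=\mathcal O[\theta_0,\psi_0]\llbracket X\rrbracket$: this is clear for $n\geq 1$, and for the constant term one observes that when $(\theta_0,\psi_0)\neq(\omega^{-2},\mathbbm{1})$ either $\psi_0\neq\mathbbm{1}$ (so the term is zero) or $\theta_0\omega^2$ is a nontrivial character of conductor prime to $p^2$, whence $G(X,\theta_0\omega^2)=F(u^{-1}(1+X)^{-1}-1,\theta_0\omega^2)\in\mathcal O[\theta_0]\llbracket X\rrbracket$; in the exceptional case $F(\,\cdot\,,\mathbbm{1})$ has a simple pole and $\delta_{\theta,\psi}$ is chosen to cancel it, which is \cite[Lemma 7.12]{Wash}. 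Second, each $v_{k,\epsilon}(\mathcal E_{\theta_0,\psi_0;t})$ must lie in $eM_{k,r,\mathcal O,\epsilon}$: the classical form $E_k\bigl((\theta_0\epsilon\omega^{2-k})_{(p)},\psi_0;t\bigr)$ is defined over $\mathcal O[\epsilon]$, has level dividing $N_r$ by the conductor hypotheses (which in particular force $p^2\nmid f_{\theta_0}$), has Nebentypus $\theta_0\psi_0\epsilon\omega^{2-k}$ whose restriction to $U_1$ is $\epsilon$ because $\theta_0$, $\psi_0$, $\omega^{2-k}$ are trivial on $U_1$ (their conductors being prime to $p^2$), so it satisfies the defining $\sigma_\alpha$-eigenproperty, and it is a $U_p$-eigenform with eigenvalue $\psi_0(p)$, a $p$-adic unit since $p\nmid f_{\psi_0}$, hence fixed by Hida's idempotent $e$. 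The parity hypothesis $(\theta_0\psi_0)(-1)=1$ is what keeps the Eisenstein series nonzero.

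For the spanning assertion, $M_\Lambda$ is $\Lambda$-free of finite rank (Proposition \ref{dec10-209pm}), so $M_\Lambda\otimes_\Lambda Q(\Lambda)$ is a finite-dimensional $Q(\Lambda)$-vector space containing $S_\Lambda\otimes_\Lambda Q(\Lambda)$; by Proposition \ref{mar3-2015-324pm}, reduction modulo a generic $P_{k,\epsilon}$ identifies the quotient with $eM_{k,r,\mathcal O,\epsilon}/eS_{k,r,\mathcal O,\epsilon}$, which by the classical theory is spanned by the ordinary $p$-stabilized Eisenstein series of the relevant levels, and these are parametrized (the second character being forced to have conductor prime to $p$) exactly by the triples $(\theta_0,\psi_0,t)$ satisfying the hypotheses of the theorem. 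Since the $\mathcal E_{\theta_0,\psi_0;t}$ attached to distinct triples have distinct $q$-expansions---indeed distinct Hecke eigensystems, visible on their specializations---they are $Q(\Lambda)$-linearly independent, and a comparison with the dimension of the quotient forces them, together with a $\Lambda$-basis of $S_\Lambda$, to span $M_\Lambda\otimes_\Lambda Q(\Lambda)$.

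The genuinely delicate points are the constant-term bookkeeping---matching the Euler factor contributed by $p$-stabilization against the imprimitive value $L\bigl(1-k,(\,\cdot\,)_{(p)}\bigr)$---and the exceptional case $(\theta_0,\psi_0)=(\omega^{-2},\mathbbm{1})$, where the Iwasawa power series fails to be integral and one must appeal to \cite[Lemma 7.12]{Wash} to see that $\delta_{\theta,\psi}$ repairs this; verifying ordinarity of the specialized series uniformly in $k$ and $\epsilon$ also takes a little care. Everything else is a routine comparison of $q$-expansions, and this is in essence Ohta's computation in \cite{OhtaEC1,OhtaCM}, which I would follow.
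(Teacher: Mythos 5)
The paper does not give its own proof of this theorem; it is quoted directly from Ohta (\cite{OhtaCM}~\S1.4, \cite{OhtaEC1}~\S2.4), so there is no in-paper argument against which to compare. Your sketch is the standard $q$-expansion comparison that Ohta carries out, and it correctly identifies the three strands: matching nonconstant coefficients via $(1+X)^{s(d)}\big|_{X=\epsilon(u)u^{k-2}-1}=(\epsilon\omega^{2-k})(d)d^{k-2}$, matching the constant term against $L(1-k,(\theta_0\epsilon\omega^{2-k})_{(p)})$ using the stated interpolation property of $G$, and a generic-fiber rank count for the spanning claim. Two places deserve a second look before you would call this a full proof. First, the constant-term/exceptional-case bookkeeping: you assert that $\delta_{\theta,\psi}$ "cancels" the simple pole of $F(\,\cdot\,,\mathbbm{1})$, but if one reads the paper's normalizations literally (pole of $F(Y,\mathbbm{1})$ at $Y=p$, hence of $G(X,\mathbbm{1})$ at $1+X=u^{-2}$, while $\delta(X)=u^{-1}(1+X)^{-1}-u^{-2}$ vanishes at $1+X=u$), the zero and the pole do not coincide; you should verify the precise form of $\delta$ against Ohta's source rather than take the cancellation on faith. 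Second, the spanning step needs the explicit count: one must check that for a generic $P_{k,\epsilon}$ the set of triples $(\theta_0,\psi_0,t)$ satisfying (1)--(4) is in bijection with a basis of the ordinary Eisenstein subspace $eM_{k,r,\mathcal O,\epsilon}/eS_{k,r,\mathcal O,\epsilon}$, and that no weight-$2$ degeneracy (the $E_2$ relation) interferes at the chosen specialization. Neither point is a structural error---both are exactly the details Ohta supplies---but as written they are the thin spots in an otherwise correct sketch.
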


%~~~~~~~
\begin{prop}\label{dec18-2013-1155am} Let $D_\theta$ and $D_\psi$ be the largest square-free factors of $M_\theta$ and $M_\psi$, respectively, such that $\gcd(D_\theta,f_\theta p) = 1= \gcd(D_\psi,f_\psi ).$ For all integers $t\geq 1$ we have $$ \mathcal{E}_{\theta,\psi;t}~=~ \sum_{\substack{\alpha \mid D_\theta \\ \beta\mid D_\psi}}\alpha\mu(\alpha)\mu(\beta)\theta_0(\alpha)\psi_0(\beta)(1+X)^{s(\alpha)}\mathcal{E}_{\theta_0,\psi_0;\alpha\beta t}$$

\noindent where $\mu$ is the M\"{o}bius function.

\end{prop}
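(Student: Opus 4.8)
The plan is to reduce the identity to a statement about the coefficients of the classical $q$-expansions and then verify it level by level, using the specialization maps $v_{k,\epsilon}$ from Proposition~\ref{mar3-2015-324pm} together with the explicit formula for $v_{k,\epsilon}(\mathcal{E}_{\theta_0,\psi_0;t})$ in Theorem~\ref{dec31-2013-205pm}. Since $M_\Lambda$ is a free $\Lambda$-module whose specializations $M_\Lambda/P_{k,\epsilon}M_\Lambda \cong eM_{k,r,\mathcal{O},\epsilon}$ separate elements (an element of $\Lambda\llbracket q\rrbracket$ is determined by its images under all $v_{k,\epsilon}$, since infinitely many $P_{k,\epsilon}$ are coprime), it suffices to prove the asserted equality after applying $v_{k,\epsilon}$ for every $k\geq 2$ and $\epsilon\in\widehat{U}_1$. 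Concretely, the left-hand side specializes to $\delta_{\theta,\psi}(\epsilon(u)u^{k-2}-1)E_k((\theta\epsilon\omega^{2-k})_{(p)},\psi;t)$ — note here $\theta$, $\psi$ are the possibly imprimitive characters of modulus $M_\theta$, $M_\psi$ — while the right-hand side specializes to a sum over $\alpha\mid D_\theta$, $\beta\mid D_\psi$ of terms $\alpha\mu(\alpha)\mu(\beta)\theta_0(\alpha)\psi_0(\beta)(\epsilon(u)u^{k-2})^{s(\alpha)}\cdot\delta_{\theta,\psi}(\epsilon(u)u^{k-2}-1)E_k((\theta_0\epsilon\omega^{2-k})_{(p)},\psi_0;\alpha\beta t)$. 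The common factor $\delta_{\theta,\psi}(\epsilon(u)u^{k-2}-1)$ cancels (one should check the $\delta$-factor is genuinely the same on both sides, which it is since it depends only on $(\theta_0,\psi_0)$), reducing everything to a classical identity expressing the imprimitive Eisenstein series $E_k((\theta\epsilon\omega^{2-k})_{(p)},\psi;t)$ as a $\mathbb{Z}$-linear combination of the $p$-stabilized primitive-character Eisenstein series $E_k((\theta_0\epsilon\omega^{2-k})_{(p)},\psi_0;\alpha\beta t)$.

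Next I would establish that classical identity by comparing $q$-expansion coefficients. Writing $\chi=\theta\epsilon\omega^{2-k}$ (so its primitive part differs from $(\theta_0\epsilon\omega^{2-k})$ only by the prime-to-$p$ part), the $n$-th coefficient of $E_k(\chi_{(p)},\psi;t)$ for $t\mid n$ is $\sum_{0<d\mid (n/t),\ p\nmid d}\chi_{(p)}(d)\psi(n/(td))d^{k-1}$, and one wants to recognize this as the Möbius-twisted sum over $\alpha\mid D_\theta$, $\beta\mid D_\psi$. The key elementary input is that for a character $\theta$ of modulus $M_\theta$ with primitive part $\theta_0$ of conductor $f_\theta$, one has $\theta(d)=\theta_0(d)$ when $\gcd(d,M_\theta)=1$ and $\theta(d)=0$ otherwise; the indicator of $\gcd(d,M_\theta/f_\theta)=1$ (after removing the $p$-part and the part of $M_\theta$ supported on primes dividing $f_\theta$, which is what $D_\theta$ picks out) can be written via Möbius inversion as $\sum_{\alpha\mid D_\theta,\ \alpha\mid d}\mu(\alpha)$. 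Substituting this (and the analogous expression for $\psi$), reindexing the divisor sum by $d=\alpha d'$, and tracking the weight factor $d^{k-1}=\alpha^{k-1}(d')^{k-1}$ against $(\epsilon(u)u^{k-2})^{s(\alpha)}\alpha = [\alpha]^{k-2}\alpha\cdot(\text{correction})$ — here one uses $(1+X)^{s(\alpha)}\mapsto (\epsilon(u)u^{k-2})^{s(\alpha)}=\epsilon([\alpha])[\alpha]^{k-2}$ and $\alpha^{k-1}=\alpha\cdot\alpha^{k-2}=\alpha\cdot\omega(\alpha)^{k-2}[\alpha]^{k-2}$, with $\omega(\alpha)^{2-k}$ absorbed into the twist $\theta_0\omega^{2-k}$ — should produce exactly the claimed coefficient on the right. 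The constant terms must also be matched: both sides carry a $G$-value (a $p$-adic $L$-value), and the Euler-factor discrepancy between the imprimitive $L(1-k,\chi_{(p)})$ implicit on the left and the product over $\alpha,\beta$ on the right should collapse to the single term $\alpha=\beta=1$ (all higher terms contribute to the positive coefficients only, since the constant term of $\mathcal{E}_{\theta_0,\psi_0;m}$ does not depend on $m$), so the constant-term check is comparatively easy.

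The main obstacle, I expect, is the bookkeeping in the divisor-sum manipulation: one has to be careful that $D_\theta$ and $D_\psi$ are defined as the \emph{largest square-free} factors coprime to $f_\theta p$ and $f_\psi$ respectively, so the Möbius inversion only ranges over square-free $\alpha$, and one must confirm that primes dividing $M_\theta$ to a higher power than in $f_\theta$, or primes dividing both $M_\theta$ and $f_\theta$, are correctly handled (they simply force $\theta(d)=0$ or $\theta(d)=\theta_0(d)$ with no ambiguity, and do not enter the Möbius sum). A secondary subtlety is the interaction of the $p$-stabilization with the index $t$: multiplying $t$ by $\alpha\beta$ changes which $q$-powers appear, and one must check that the reindexing $n\mapsto \alpha\beta n$ on the right matches the divisor structure on the left rather than overcounting — this is where the constraint $\gcd(\alpha,f_\theta p)=1=\gcd(\beta,f_\psi)$ and $\gcd(\alpha\beta,?)$ conditions guarantee the sums are disjoint and exhaustive. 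Once the combinatorial identity is pinned down at the level of formal $q$-expansions with the variable $\epsilon(u)u^{k-2}$ treated as a formal parameter, the passage back to $\Lambda\llbracket q\rrbracket$ is immediate because the identity holds for all $k$ and $\epsilon$ and both sides lie in $M_\Lambda$, which is separated by these specializations; indeed one could even argue purely formally, checking the identity coefficient-by-coefficient in $\Lambda\llbracket q\rrbracket$ using the definitions of $\mathcal{E}_{\theta,\psi;t}$ and $\mathcal{E}_{\theta_0,\psi_0;\alpha\beta t}$ directly, which may in fact be the cleanest route and avoids invoking Theorem~\ref{dec31-2013-205pm} at all for the positive coefficients.
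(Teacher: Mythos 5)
Your proposal is correct in substance and, in the end, identifies precisely the route the paper takes: the paper verifies the identity coefficient-by-coefficient in $\Lambda\llbracket q\rrbracket$ directly from the definition of $\mathcal{E}_{\theta,\psi;t}$, which is the ``purely formal'' argument you describe in your closing sentences. The long detour in the first two paragraphs through the specialization maps $v_{k,\epsilon}$ and the classical $p$-stabilized Eisenstein series $E_k$ is unnecessary (both sides are given as explicit formal $q$-expansions, so there is nothing to reconstruct from specializations), and it is not what the paper does. The key combinatorial content is shared: the paper performs the Möbius-type cancellation recursively, removing one prime $p_i\mid D_\theta$ at a time via the two-term identity
\[
a_{nt}(\mathcal{E}_{\theta^{(i-1)},\psi;t}) - a_{nt}(\mathcal{E}_{\theta^{(i)},\psi;t}) = p_i\theta_0(p_i)(1+X)^{s(p_i)}\, a_{nt}(\mathcal{E}_{\theta^{(i-1)},\psi;p_it}),
\]
and then iterating (and likewise for $\psi$), whereas you propose the equivalent one-shot Möbius inversion $\theta(d)=\theta_0(d)\sum_{\alpha\mid\gcd(d,D_\theta)}\mu(\alpha)$ for $p\nmid d$. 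Both are fine; the paper's recursion is slightly more self-contained, yours is more compact.

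One point in your sketch is actually wrong and worth flagging. Your treatment of the constant term says the $\alpha,\beta$-sum ``should collapse to the single term $\alpha=\beta=1$.'' It does not. Since $a_0(\mathcal{E}_{\theta_0,\psi_0;m})$ is independent of $m$ (as you note), every pair $(\alpha,\beta)$ contributes equally, and the constant term on the right factors as
\[
a_0(\mathcal{E}_{\theta_0,\psi_0})\cdot\Big(\sum_{\alpha\mid D_\theta}\alpha\mu(\alpha)\theta_0(\alpha)(1+X)^{s(\alpha)}\Big)\cdot\Big(\sum_{\beta\mid D_\psi}\mu(\beta)\Big).
\]
The first factor in parentheses is precisely the Euler-factor discrepancy $G(X,\theta\omega^2)/G(X,(\theta\omega^2)_0)$, and the second is $1$ if $D_\psi=1$ and $0$ otherwise (which squares with $\psi(0)$ versus $\psi_0(0)$ on the left). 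The paper invokes both of these facts explicitly to close the constant-term case; your proposal would need to replace the ``collapse'' heuristic with this factorization.
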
 

\begin{proof} Suppose we have the following factorizations of $D_\theta$ and $D_\psi$,
\begin{align*}
D_{\theta} &~=~ p_1\cdots p_m \\
D_{\psi} &~=~ p_{1}^\prime\cdots p_{m^\prime}^\prime\,, 
\end{align*}

\noindent keeping in mind that the sets $\{p_1,\dots,p_m\}$ and $\{p_1^\prime,\dots,p_{m^\prime}^\prime\}$ may not be disjoint. To simplify the notation a bit, for $1\leq i \leq m$ and $1\leq j \leq m^\prime$ define
\begin{align*}
\theta^{(i)} &~=~ \theta_{(p_1\cdots p_i)}\\
\psi^{(j)} &~=~ \psi_{(p_1^\prime\cdots p_j^\prime)},\end{align*}

\noindent  with $\theta^{(0)}=\theta_0$ and $\psi^{(0)}=\psi_0$. We begin by considering the non-constant terms of $\mathcal{E}_{\theta,\psi;t}$. For all $n\geq 1$ and $1\leq i \leq m$

\begin{center}
$\displaystyle{  a_{nt}(\mathcal{E}_{
\theta^{(i-1)},\psi;t}) - a_{nt}(\mathcal{E}_{\theta^{(i)},\psi;t})  ~=~  p_i\theta_0(p_i)(1+X)^{s(p_i)} a_{nt}(\mathcal{E}_{\theta^{(i-1)},\psi;p_it})   }$,
\end{center}

\noindent which gives us the recursive identity

\begin{center}
$\displaystyle{  a_{nt}(\mathcal{E}_{\theta^{(i)},\psi;t})  ~=~ a_{nt}(\mathcal{E}_{\theta^{(i-1)},\psi;t}) + p_i\mu(p_i) \theta_0(p_i)(1+X)^{s(p_i)} a_{nt}(\mathcal{E}_{\theta^{(i-1)},\psi;p_i t})  }$.
\end{center}

\noindent From this identity, we obtain
\begin{align*}
a_{nt}(\mathcal{E}_{\theta,\psi;t}) ~=~  \sum_{\alpha\mid D_\theta}\alpha\mu(\alpha)\theta_0(\alpha)(1+X)^{s(\alpha)}a_{nt}(\mathcal{E}_{\theta_0,\psi;\alpha t}).
\end{align*}

\noindent Next we note that for $1\leq j\leq m^\prime$ we have

\begin{center}
$\displaystyle{  a_{n t}(\mathcal{E}_{
\theta_0,\psi^{(j-1)};\alpha t}) - a_{n t}(\mathcal{E}_{\theta_0,\psi^{(j)};\alpha t}) ~=~ \psi_0(p_j^\prime) a_{n t}(\mathcal{E}_{\theta_0,\psi^{(j-1)};\alpha tp_j^\prime})  }$.
\end{center}

\noindent Applying the same recursive argument as above we obtain the desired result for the non-constant coefficients. 

Finally, by considering the Euler factor expansion of the Kubota-Leopoldt $p$-adic $L$-function, we have
$$G(X,\theta\omega^2) ~=~ \left(\sum_{\alpha\mid D_\theta}\alpha \mu(\alpha)\theta_0(\alpha)(1+X)^{s(\alpha)}\right)\cdot G(X,(\theta\omega^2)_0),$$ and the result follows by noting that $$\sum_{\beta\mid D_\psi} \mu(\beta) ~=~ \left\{\begin{array}{rcl}1 & & D_\psi=1\\ 0 & & D_\psi>1\end{array}\right..$$\end{proof}

\noindent The following corollary is an immediate consequence of Proposition \ref{dec18-2013-1155am} and Theorem \ref{dec31-2013-205pm}.

%~~~~~~~~~~~~
\begin{cor}\label{jan3-2014-157pm} The power series $\mathcal{E}_{\theta,\psi;t}$ is an element of $M_\Lambda$ if the following conditions are satisfied:\begin{enumerate}[itemsep=0em]
\item[$\mathrm{(1)}$]~~$p\nmid t$
\item[$\mathrm{(2)}$]~~$M_{\theta} M_{\psi} t\mid Np$

\item[$\mathrm{(3)}$]~~$(M_\psi,p)=1$

\item[$\mathrm{(4)}$]~~$(\theta_0\psi_0)(-1)=1.$
\end{enumerate}

\end{cor}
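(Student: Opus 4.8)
The plan is to deduce Corollary~\ref{jan3-2014-157pm} from Proposition~\ref{dec18-2013-1155am} and Theorem~\ref{dec31-2013-205pm} by checking that, under hypotheses (1)--(4) on the pair $(\theta,\psi)$, each term $\mathcal{E}_{\theta_0,\psi_0;\alpha\beta t}$ appearing on the right-hand side of the identity in Proposition~\ref{dec18-2013-1155am} satisfies the corresponding hypotheses of Theorem~\ref{dec31-2013-205pm}. Since $M_\Lambda$ is a $\Lambda$-module and the coefficients $\alpha\mu(\alpha)\mu(\beta)\theta_0(\alpha)\psi_0(\beta)(1+X)^{s(\alpha)}$ lie in $\Lambda$, once every $\mathcal{E}_{\theta_0,\psi_0;\alpha\beta t}$ is in $M_\Lambda$ the whole sum is too, which is exactly the assertion.

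So the substance is the four bookkeeping checks, indexed by divisors $\alpha\mid D_\theta$ and $\beta\mid D_\psi$. First, $p\nmid \alpha\beta t$: this holds because $p\nmid t$ by (1), and $\alpha\mid D_\theta$, $\beta\mid D_\psi$ where by definition $\gcd(D_\theta,f_\theta p)=1$ and $\gcd(D_\psi,f_\psi)=1$ with $(M_\psi,p)=1$ forcing $p\nmid D_\psi$ as well (and $D_\theta$ coprime to $p$ by construction). Second, $f_{\theta_0}f_{\psi_0}\alpha\beta t\mid Np$: here $f_{\theta_0}=f_\theta$, $f_{\psi_0}=f_\psi$, and one needs $f_\theta f_\psi \alpha\beta t\mid Np$; this should follow from $M_\theta M_\psi t\mid Np$ in (2) together with the fact that $f_\theta\alpha\mid M_\theta$ (as $\alpha\mid D_\theta$ is a square-free factor of $M_\theta$ coprime to $f_\theta$) and $f_\psi\beta\mid M_\psi$ similarly --- one has to be a little careful that $f_\theta\alpha$ divides $M_\theta$ and not merely $M_\theta$ times something, which is ensured by the coprimality $\gcd(\alpha,f_\theta)=1$ and $\alpha\mid M_\theta$. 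Third, $(f_{\psi_0},p)=1$, i.e.\ $(f_\psi,p)=1$: immediate from $(M_\psi,p)=1$ in (3), since $f_\psi\mid M_\psi$. Fourth, $((\theta_0)_0(\psi_0)_0)(-1)=1$: but $(\theta_0)_0=\theta_0$ and $(\psi_0)_0=\psi_0$, so this is literally hypothesis (4).

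The only mildly delicate point --- and the one I would expect to be the ``main obstacle,'' though it is really just care rather than difficulty --- is the divisibility $f_\theta f_\psi\alpha\beta t\mid Np$ in the second check, because the sets of primes dividing $D_\theta$ and $D_\psi$ need not be disjoint, so $\alpha$ and $\beta$ can share prime factors. One must verify that $f_\theta f_\psi\alpha\beta$ still divides $M_\theta M_\psi$: since $\alpha\mid D_\theta$ with $D_\theta$ a square-free divisor of $M_\theta$ coprime to $f_\theta p$, we get $f_\theta\alpha\mid M_\theta$; likewise $f_\psi\beta\mid M_\psi$; multiplying gives $f_\theta f_\psi\alpha\beta\mid M_\theta M_\psi$, and then $t$ can be adjoined using (2). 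I would write this out in one or two lines and then simply invoke Theorem~\ref{dec31-2013-205pm} term by term, concluding by linearity of membership in the $\Lambda$-module $M_\Lambda$.
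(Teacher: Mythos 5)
Your proposal is correct and follows exactly the route the paper takes: the paper states that the corollary is ``an immediate consequence of Proposition~\ref{dec18-2013-1155am} and Theorem~\ref{dec31-2013-205pm},'' and your four bookkeeping checks (including the observation that $\gcd(\alpha,f_\theta)=1$ yields $f_\theta\alpha\mid M_\theta$, and likewise for $\psi$, so that $f_\theta f_\psi\alpha\beta t\mid M_\theta M_\psi t\mid Np$ even when $\alpha$ and $\beta$ share primes) are precisely the verification the paper leaves implicit.
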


\noindent It is well known that $\Lambda$-adic Eisenstein series are Hecke eigenforms. We recall the following results due to Ohta regarding their eigenvalues.

%~~~~~~~~~~
\begin{prop}[\cite{OhtaCM}, Lemma 1.4.8]\label{sep8-2014-1136am} Suppose $\mathcal{E}_{\theta,\psi;t}\in M_\Lambda$. Then \begin{enumerate}[itemsep=0em]
\item[$(i)$]$\mathcal{E}_{\theta,\psi;t}|T_{d,d} ~=~ (\theta\psi)(d)(1+X)^{s(d)} \cdot\mathcal{E}_{\theta,\psi;t}\hfill (\text{integers $d>0$ prime to }Np)$

\item[$(ii)$]$\mathcal{E}_{\theta,\psi;t}|\langle d\rangle ~=~ (\theta\psi)(d) \cdot\mathcal{E}_{\theta,\psi;t}\hfill (\text{integers $d>0$ prime to }Np) $

\item[$(iii)$]$\mathcal{E}_{\theta,\psi;t}| T_\ell ~=~ (\theta(\ell) \ell (1+X)^{s(\ell)} + \psi(\ell))\cdot\mathcal{E}_{\theta,\psi;t}\hfill (\text{primes }\ell\nmid Np)$

\item[$(iv)$]$\mathcal{E}_{\theta,\psi;t}| T_p ~=~ \psi(p)\cdot\mathcal{E}_{\theta,\psi;t}$
\end{enumerate}\medskip

\noindent If $M_\theta M_\psi = N$ or $Np$ $($consequently $t=1)$, identity $(iii)$ holds for all primes $\ell\neq p$.

\end{prop}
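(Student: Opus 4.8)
The plan is to prove Proposition \ref{sep8-2014-1136am} by reducing everything to the classical specializations $v_{k,\epsilon}$, using the fact established in Theorem \ref{dec31-2013-205pm} that these specializations are classical $p$-stabilized Eisenstein series whose Hecke eigenvalues are already known. Since $M_\Lambda$ is a free $\Lambda$-module and the maps $v_{k,\epsilon}$ separate points (as $\{\epsilon(u)u^{k-2}-1 : k\geq 2, \epsilon\in\widehat{U}_1\}$ is Zariski-dense in $\operatorname{Spec}\Lambda$), it suffices to verify each identity after applying $v_{k,\epsilon}$ for all $k\geq 2$ and all $\epsilon\in\widehat{U}_1$. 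The compatibilities $v_{k,\epsilon}(F|T_n) = v_{k,\epsilon}(F)|T_n$ and $v_{k,\epsilon}(F|T_{d,d}) = v_{k,\epsilon}(F)|T_{d,d}$ recalled at the end of Section \ref{secORD} translate the $\Lambda$-adic statement into a classical one for each specialization.

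First I would reduce to the case $t=1$ and to primitive characters. By Proposition \ref{dec18-2013-1155am}, $\mathcal{E}_{\theta,\psi;t}$ is a $\Lambda$-linear combination of the forms $\mathcal{E}_{\theta_0,\psi_0;\alpha\beta t}$, and one checks that the claimed eigenvalue formulas are insensitive to replacing $(\theta,\psi;t)$ by $(\theta_0,\psi_0;t')$ for the relevant $t'$ — the operators $T_{d,d}$ and $\langle d\rangle$ and $T_\ell$ (for $\ell\nmid Np$) act on $\mathcal{E}_{\theta_0,\psi_0;t'}$ with eigenvalues depending only on $\theta_0,\psi_0$, and the $q^{tn}\mapsto q^{t'n}$ shift does not affect them. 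So the core computation is for $\mathcal{E}_{\theta_0,\psi_0;1}$ with $\theta_0,\psi_0$ primitive. Then I would fix $k\geq 2$ and $\epsilon$, and invoke Theorem \ref{dec31-2013-205pm} to identify $v_{k,\epsilon}(\mathcal{E}_{\theta_0,\psi_0;1})$ with (a scalar times) the classical $p$-stabilized Eisenstein series $E_k((\theta_0\epsilon\omega^{2-k})_{(p)},\psi_0;1)$. The classical eigenvalue formulas for such Eisenstein series are standard: for a newform-type Eisenstein series attached to $(\chi_1,\chi_2)$ of weight $k$ one has $T_\ell$-eigenvalue $\chi_1(\ell) + \chi_2(\ell)\ell^{k-1}$ for $\ell$ prime to the level, and the $p$-stabilization (removing the Euler factor at $p$) leaves the $U_p$-eigenvalue equal to one of $\chi_1(p)$ or $\chi_2(p)$ — here $\psi_0(p)$, matching (iv). Substituting $\chi_1 = \theta_0\epsilon\omega^{2-k}$, $\chi_2 = \psi_0$, and using $\ell^{k-1} = \ell\cdot(\epsilon(u)u^{k-2})\cdot(\text{correction from }\omega)$ via the identity $\ell^{k-2} = \omega(\ell)^{k-2}[\ell]^{k-2} = \omega(\ell)^{k-2}u^{(k-2)s(\ell)}$, one recovers $\theta_0(\ell)\ell(1+X)^{s(\ell)} + \psi_0(\ell)$ after specialization $X\mapsto \epsilon(u)u^{k-2}-1$ (note $\epsilon(\ell)=1$ since $\ell\nmid p$). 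A parallel but simpler bookkeeping handles $T_{d,d}$ and $\langle d\rangle$, where the $\ell$-adic weight factor $(1+X)^{s(d)}$ and the nebentypus factor $(\theta\psi)(d)$ come apart cleanly, and for $\langle d\rangle$ only the nebentypus part survives.

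For the final sentence — that when $M_\theta M_\psi = N$ or $Np$, identity (iii) holds for all primes $\ell\neq p$, including $\ell\mid N$ — the point is that under this hypothesis $\mathcal{E}_{\theta,\psi;1}$ is itself the "new at level $N$" Eisenstein series (there is no oldform ambiguity and no extra Euler factor to stabilize at $\ell$), so its $v_{k,\epsilon}$-specialization is the Eisenstein series of exact level $f_\theta f_\psi p^r/\gcd(f_\theta,p)$, and the classical $T_\ell$-eigenvalue formula $\theta(\ell)\ell^{k-1} + \psi(\ell)$ (with the convention that $\theta(\ell)$ or $\psi(\ell)$ vanishes if $\ell$ divides the conductor) continues to hold for all $\ell\neq p$ by the standard theory of Eisenstein series with nebentypus; I would cite the classical computation for this. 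I expect the main obstacle to be purely organizational rather than conceptual: carefully matching the normalizations between Ohta's $\Lambda$-adic conventions (the $(1+X)^{s(d)}$ factors, the $\delta_{\theta,\psi}$ prefactor, the choice of $q^{tn}$ rather than $q^n$) and the classical Eisenstein eigenvalue formulas, and checking that the $\delta_{\theta,\psi}$ scalar — being independent of $n\geq 1$ and of the Hecke action on $q$-coefficients — genuinely drops out of every eigenvalue computation. Once that dictionary is in place, each identity is a one-line substitution.
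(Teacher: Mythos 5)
The paper does not prove this proposition; it imports it verbatim from Ohta \cite{OhtaCM}, Lemma 1.4.8. Your strategy of verifying the identities after applying the specialization maps $v_{k,\epsilon}$ is legitimate (the separation claim is sound: a nonzero element of $\Lambda$ vanishes at only finitely many points of the form $\epsilon(u)u^{k-2}-1$ by Weierstrass preparation), and your reduction via Proposition \ref{dec18-2013-1155am} to the primitive case is fine for (i)--(iv) since each summand there is a $T_n$-eigenform with the same eigenvalue. That said, the more direct route is to compute on $q$-expansions in $\Lambda\llbracket q\rrbracket$: the coefficients of $\mathcal{E}_{\theta,\psi;t}$ are explicit divisor sums, the action of $T_\ell$ and $T_{d,d}$ on $q$-expansions is given by the formula $a_m(f|T_n)=\sum_{d\mid\gcd(m,n)}a_{mn/d^2}(f|T_{d,d})$, and each of (i)--(iv) reduces to an elementary manipulation of those sums. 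This avoids the classical dictionary entirely, and is also the cleanest way to handle the final sentence (one checks $\sigma(m\ell)=(\theta(\ell)\ell(1+X)^{s(\ell)}+\psi(\ell))\sigma(m)$ for all $m$ using $T_{\ell,\ell}=0$ when $\ell\mid N$, which holds by direct inspection of the divisor sum whether $\ell$ divides $f_\theta$, $f_\psi$, $M_\theta/f_\theta$, or $M_\psi/f_\psi$). Your treatment of the last sentence as ``cite the classical computation'' leaves a gap that the direct computation would fill.

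There is also a concrete bookkeeping error that, taken at face value, breaks the specialization argument. You write ``note $\epsilon(\ell)=1$ since $\ell\nmid p$,'' but this is false: here $\epsilon$ is a character of $U_1=1+p\mathbb{Z}_p$, and $\epsilon(\ell)$ means $\epsilon([\ell])$ with $[\ell]=\ell\omega(\ell)^{-1}\in U_1$; since $[\ell]=u^{s(\ell)}$, one has $\epsilon(\ell)=\epsilon(u)^{s(\ell)}$, which is not $1$ for $\epsilon$ nontrivial. This factor is precisely what makes the two sides match: you need $(\epsilon\omega^{2-k})(\ell)\,\ell^{k-2}=\epsilon(u)^{s(\ell)}\,(u^{k-2})^{s(\ell)}=(\epsilon(u)u^{k-2})^{s(\ell)}$, which is the value of $(1+X)^{s(\ell)}$ at $X=\epsilon(u)u^{k-2}-1$. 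If $\epsilon(\ell)$ were $1$, the computed classical eigenvalue would only match the specialization at $\epsilon=\mathbbm{1}$. Relatedly, the intermediate identity ``$\ell^{k-1}=\ell\cdot(\epsilon(u)u^{k-2})\cdot(\text{correction from }\omega)$'' is garbled: $\ell^{k-1}$ contains no $\epsilon(u)$, and the exponent on $u^{k-2}$ should be $s(\ell)$, not $1$. Once these are corrected the rest of the dictionary you describe (including the observation that the $\delta_{\theta,\psi}$ prefactor is a $\Lambda$-scalar and so commutes with the Hecke action) does close up.
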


%~~~~~~~
\begin{lemma}[\cite{OhtaCM}, Lemma 1.4.9]\label{jun22-2015-1240pm} Suppose $\mathcal{E}_{\theta_i,\psi_i;t_i}\in M_\Lambda$ for $i=1,2$. The $T_\ell$-eigenvalues of $\mathcal{E}_{\theta_1,\psi_1;t_1}$ and $\mathcal{E}_{\theta_2,\psi_2;t_2}$ are congruent modulo $\mathfrak{m} = (\pi,X)$ for all primes $\ell\nmid Np$ if and only if at least one  of the following conditions is satisfied: \begin{enumerate}[itemsep=0em]
\item[$(1)$] $(\theta_1)_0\equiv (\theta_2)_0~(\mathrm{mod}\,\pi) \text{ and~~}(\psi_1)_0\equiv (\psi_2)_0~(\mathrm{mod}\,\pi),$

\item[$(2)$] $(\theta_1)_0\equiv (\psi_2\omega^{-1})_0~(\mathrm{mod}\,\pi) \text{ and~~}(\psi_1)_0\equiv (\theta_2\omega)_0~(\mathrm{mod}\,\pi).$\end{enumerate}

\end{lemma}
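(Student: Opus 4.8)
The plan is to work directly with the $T_\ell$-eigenvalues supplied by Proposition \ref{sep8-2014-1136am}(iii): for $\mathcal{E}_{\theta_i,\psi_i;t_i}$ the eigenvalue of $T_\ell$ at a prime $\ell\nmid Np$ is $a_\ell^{(i)}(X) = \theta_i(\ell)\ell(1+X)^{s(\ell)} + \psi_i(\ell)$. Since $\mathfrak{m}=(\pi,X)$, reduction modulo $\mathfrak{m}$ sends $1+X\mapsto 1$, $\ell\mapsto$ its residue, and a character value $\varphi(\ell)$ to its reduction; moreover $s(\ell)$ only enters through $(1+X)^{s(\ell)}\equiv 1$, and one checks $\theta_i(\ell)\equiv (\theta_i)_0(\ell)$ and $\psi_i(\ell)\equiv(\psi_i)_0(\ell)\pmod\pi$ because $\theta_i/(\theta_i)_0$ and $\psi_i/(\psi_i)_0$ are tame at $p$ and trivial modulo $\mathfrak m$ on prime-to-$p$ arguments. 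Hence the hypothesis ``$a_\ell^{(1)}\equiv a_\ell^{(2)}\pmod{\mathfrak m}$ for all $\ell\nmid Np$'' is equivalent to the identity of functions on the set of primes $\ell\nmid Np$ (equivalently, after Dirichlet density, on all residue classes coprime to the relevant modulus)
$$
(\theta_1)_0(\ell)\,\ell + (\psi_1)_0(\ell) ~\equiv~ (\theta_2)_0(\ell)\,\ell + (\psi_2)_0(\ell) \pmod{\pi}.
$$
The ``if'' direction is then immediate: under condition (1) the first terms match and the second terms match; under condition (2), using $(\psi_2\omega^{-1})_0(\ell)\equiv(\theta_1)_0(\ell)$ and the fact that $\omega(\ell)\equiv\ell\pmod p$ (hence $\pmod\pi$), we get $(\theta_1)_0(\ell)\ell\equiv(\psi_2)_0(\ell)$ and similarly $(\psi_1)_0(\ell)\equiv(\theta_2)_0(\ell)\ell$, so the two expressions swap into each other.

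For the ``only if'' direction, I would argue by linear independence of characters modulo $\pi$. Rewrite the congruence as
$$
\big((\theta_1)_0 - (\theta_2)_0\big)(\ell)\,\ell ~\equiv~ \big((\psi_2)_0 - (\psi_1)_0\big)(\ell) \pmod{\pi}
$$
for all $\ell$ in a set of primes of density $1$. Using $\ell\equiv\omega(\ell)\pmod\pi$, the left side is $\big((\theta_1\omega)_0 - (\theta_2\omega)_0\big)(\ell)$ as a function of $\ell$, so we obtain an $\overline{\mathbb F}_p$-linear relation among the reductions mod $\pi$ of the (primitive, prime-to-$p$) characters $(\theta_1\omega)_0, (\theta_2\omega)_0, (\psi_1)_0, (\psi_2)_0$, valid on all but density-zero many residue classes and hence on all of them. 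By the standard independence of distinct characters — here one must be slightly careful because these are reductions mod $\pi$ and two a priori different characters can have equal reduction — one concludes that after cancelling equal-reduction pairs the relation forces either $(\theta_1\omega)_0\equiv(\theta_2\omega)_0$ and $(\psi_1)_0\equiv(\psi_2)_0$ (the case where the ``$\theta$'' terms cancel against each other and the ``$\psi$'' terms against each other), which is exactly (1), or $(\theta_1\omega)_0\equiv(\psi_2)_0$ and $(\theta_2\omega)_0\equiv(\psi_1)_0$ (the cross-cancellation), which is exactly (2) after translating $\omega$ across. One should also dispose of the degenerate possibility that some character reduces to the same thing as another inside a single pair, e.g.\ $(\theta_1\omega)_0\equiv(\psi_1)_0$: then the relation collapses to $(\theta_2\omega)_0\equiv(\psi_2)_0$ as well, and one checks this still places us in case (1) or (2) (indeed both simultaneously, up to reduction).

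The main obstacle is precisely this last bookkeeping: character independence gives a clean dichotomy only when the four characters in play have pairwise distinct reductions mod $\pi$, and the lemma is stated with congruences ($\equiv\pmod\pi$), not equalities, so I will need to phrase the independence argument for the group of $\overline{\mathbb F}_p^\times$-valued characters of $(\mathbb Z/M\mathbb Z)^\times$ (for $M$ the lcm of the relevant conductors with $p$ removed) and carefully enumerate the coincidence cases. A convenient way to organize this: let $a,b$ be the reductions of $(\theta_1\omega)_0,(\psi_1)_0$ and $c,d$ those of $(\theta_2\omega)_0,(\psi_2)_0$; the hypothesis is $a + (-1)\!\cdot\! c = d + (-1)\!\cdot\! b$ as $\overline{\mathbb F}_p$-valued functions on primes of density one, i.e.\ $a - c = d - b$ as functions, i.e.\ $a + b = c + d$ in the free $\overline{\mathbb F}_p$-module on characters — and since that module has the characters as a basis (distinct characters remain linearly independent over any field of characteristic $\neq$ the orders involved, and $p$-parts have been stripped), the multiset $\{a,b\}$ equals the multiset $\{c,d\}$. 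Unwinding the two matchings $\{a\mapsto c, b\mapsto d\}$ and $\{a\mapsto d, b\mapsto c\}$ yields conditions (1) and (2) respectively, and the obstacle evaporates. I would present the proof in this ``multiset equality'' form, as it handles all coincidence cases uniformly.
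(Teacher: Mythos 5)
The paper does not prove this lemma; it cites it directly as \cite{OhtaCM}, Lemma 1.4.9, so there is no in-text proof to compare against. Taken on its own merits, your argument is correct and essentially the standard one. The reduction of the $T_\ell$-eigenvalue modulo $\mathfrak{m}=(\pi,X)$ is $(\theta_i)_0(\ell)\,\ell + (\psi_i)_0(\ell)$ because $(1+X)^{s(\ell)}\equiv 1 \pmod X$ and $\theta_i(\ell)=(\theta_i)_0(\ell)$ for $\ell$ coprime to $M_{\theta_i}$ (these are equal, not just congruent); then $\ell\equiv\omega(\ell)\pmod\pi$ turns the hypothesis into a congruence $(\theta_1\omega)_0+(\psi_1)_0\equiv(\theta_2\omega)_0+(\psi_2)_0\pmod\pi$ of functions on primes away from $Np$, hence (by Dirichlet) on all of $(\mathbb{Z}/Np\mathbb{Z})^\times$. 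Your ``multiset equality'' formulation via Dedekind/Artin independence of distinct characters over the residue field is the clean way to extract the dichotomy, and it correctly disposes of the degenerate coincidences (e.g.\ if the two characters on one side agree mod $\pi$, then since $p\geq 5$ the scalar $2$ is invertible and all four reductions coincide). The only minor point worth tightening if you write this up: state explicitly that the four characters all have conductor dividing $Np$ (using $p^2\nmid M_{\theta_i}$) so that a single common modulus works, and that the reduction of $\omega$ mod $\pi$ is still of exact order $p-1$, so congruences like $(\theta_1\omega)_0\equiv(\theta_2\omega)_0\pmod\pi$ descend to $(\theta_1)_0\equiv(\theta_2)_0\pmod\pi$ by cancelling the unit $\omega(n)$ pointwise.
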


%~~~~~~~~~~~~~~~~~~~~~~~~~~~~~~~~~~~~~~~~~~~~~~~~~
%~~~~~~~~~~~~~~~~~~~~~~~~~~~~~~~~~~~~~~~~~~~~~~~~~
%~~~~~~~~~~~~~~~~~~~~~~~~~~~~~~~~~~~~~~~~~~~~~~~~~
\section{The $\Lambda$-adic residue map}\label{secRES}

The primary goal of this section is to determine the image of Eisenstein series under Ohta's $\Lambda$-adic residue map. As mentioned in the introduction, this image was determined by Ohta for Eisenstein series associated to pairs of primitive, non-exceptional characters, and we would like to generalize this result to pairs of arbitrary characters. The image of Ohta's $\Lambda$-adic residue map lies in the group ring generated over $\Lambda$ by the ordinary $\Lambda$-adic cuspidal group. We begin by describing this group.

\subsection{The ordinary $\Lambda$-adic cuspidal group}

\subsubsection{Cuspidal groups}\label{feb10-2015-1243pm}
%*****************************************
%\setcounter{figure}{10}
% \mathbb{N}oCaseChange{Homo Sapiens}

Let $M$ be a positive integer. We denote the complete modular curve associated with the group $\Gamma_1(M)$ by $X_1(M)$. Let $C_M$ denote the cusps of $X_1(M)$, which we will identify with $\Gamma_1(M)\setminus \mathbb{P}^1(\mathbb{Q})$. The map $\mathrm{SL}_2(\mathbb{Z}) \rightarrow \mathbb{P}^1(\mathbb{Q})$  defined by

\begin{center}
$\displaystyle{   \begin{pmatrix}a& b \\ c & d\end{pmatrix} \mapsto \frac{a}{c}  }$
\end{center}

\noindent induces a bijection $\Gamma_1(M)\setminus \mathrm{SL}_2(\mathbb{Z})\,/\,\mathfrak{I}_\infty  \rightarrow\Gamma_1(M)\setminus \mathbb{P}^1(\mathbb{Q})$, where $\mathfrak{I}_\infty\subset \mathrm{SL}_2(\mathbb{Z})$ is the isotropy subgroup of the cusp at $\infty$. Let
\begin{align*}
\displaystyle{ A_M ~:=~ \left\{\begin{bmatrix} x \\ y\end{bmatrix} \in (\mathbb{Z}/M\mathbb{Z})^{2}:  \mathrm{gcd}(x,y)=1\right\}/\sim},
\end{align*}

\noindent where

\begin{center}
$\displaystyle{  \begin{bmatrix} x \\ y\end{bmatrix} ~\sim~ \begin{bmatrix} x^\prime \\ y^\prime\end{bmatrix} }~~\Longleftrightarrow~~ \begin{array}{l}x \equiv x^\prime~(\mathrm{mod}\,\gcd(M,y))\\
y\equiv y^\prime~(\mathrm{mod}\,M)\end{array}$
\end{center}

\noindent and set

\begin{center}
$\begin{bmatrix}a \\ c\end{bmatrix}_{M}~=~ \text{class~of~}\begin{bmatrix}a \\ c\end{bmatrix}$\text{~in~}$A_M$.
\end{center}

\noindent Then we have a natural bijection between $ \Gamma_1(M)\setminus \mathrm{SL}_2(\mathbb{Z})\,/\,\mathfrak{I}_\infty$ and $A_{M}/\{\pm 1\}$ induced by the map

\begin{center}
$\displaystyle{   \begin{pmatrix}a & b \\ c & d\end{pmatrix} ~\mapsto~\begin{bmatrix}a\\c\end{bmatrix}_M~\mathrm{mod}\,\{\pm 1\}   }$.
\end{center}

\noindent We identify $C_M$ with $A_M/\{\pm 1\}$ through this bijection. To simplify notation, let$$\begin{bmatrix}a\\c\end{bmatrix}_M^{\prime}~=~\begin{bmatrix}a\\c\end{bmatrix}_M~\mathrm{mod}\,\{\pm 1\}.$$

For any two coprime integers $M_1$ and $M_2$ satisfying $M=M_1M_2$, there are bijections $$\Gamma_1(M)\setminus \mathrm{SL}_2(\mathbb{Z})\,/\,\mathfrak{I}_\infty ~\xrightarrow{\sim}~ A_{M}/\{\pm 1\} ~\xrightarrow{\sim}~ (A_{M_1}\times A_{M_2})/\{\pm 1\}$$ induced by the maps $$\begin{pmatrix}a & b \\ c & d\end{pmatrix} ~\mapsto~\begin{bmatrix}a\\c\end{bmatrix}_M^\prime ~\mapsto~ \left(\begin{bmatrix}a\\c\end{bmatrix}_{M_1}, \begin{bmatrix}a\\c\end{bmatrix}_{M_2}\right)^{\!\!\prime}.$$ Unfortunately, this decomposition does not hold with respect to $C_M$, that is, in general

\begin{center}
$\displaystyle{  (A_{M_1}\times A_{M_2})/\{\pm1\}~\neq~ (A_{M_1}/\{\pm 1\}) \times (A_{M_2}/\{\pm1\})  }$.
\end{center}

\noindent For this reason we will often work directly with $A_M$ and then reduce modulo $\{\pm 1\}$ to obtain an element of $C_M$. 

Finally, for a ring $R$, let $R[A_{M}]$ denote the free $R$-module generated by $A_{M}$. By the decomposition above, we have

\begin{center}
$\displaystyle{  R[A_M]~\cong~ R[A_{M_1}]\otimes_R R[A_{M_2}]  }$.
\end{center}

\noindent We then define $R[C_M]$ as the quotient of $R[A_M]$ by the $R$-submodule generated by the set $\{\mathfrak{a} - (-1)\mathfrak{a}: \mathfrak{a}\in A_M\}.$

%~~~~~~~~~~~
\subsubsection{Hecke operators acting on cuspidal groups}\label{feb14-2014-1243pm}

Let $r\geq 1$. To simplify the notation a bit, set $A_r = A_{N_r}$ and $C_r=C_{N_r}$. In this section we will consider the action of Hecke operators on $\mathcal{O}[A_{r}]$ and $\mathcal{O}[C_{r}]$.

Let $n\geq 1$. Recall that the Hecke operator $T_n$ was defined in terms of the double coset $$\Gamma_r\begin{pmatrix} 1 & 0 \\ 0 & n\end{pmatrix}\Gamma_r ~:=~ \coprod_{i}\Gamma_r \alpha_i.$$ We define the action of $T_n$ on $\mathcal{O}[A_{r}]$ by 
\begin{align}\label{jun8-2015-1258pm}
T_n \begin{bmatrix}a \\c \end{bmatrix}_{N_r}  &~=~   \sum_i  \alpha_i \begin{bmatrix}a\\c \end{bmatrix}_{N_r}.
\end{align}

\noindent Similarly, for any $d\in (\mathbb{Z}/N_r\mathbb{Z})^\times$ we define \begin{align*}
\langle d\rangle \begin{bmatrix}a \\c \end{bmatrix}_{N_r}  &~=~   \begin{bmatrix}d^\prime a\\d c \end{bmatrix}_{N_r},
\end{align*}

\noindent where $d^\prime$ is an integer such that $dd^\prime\equiv 1~(\mathrm{mod}\,N_r)$. From the definition we see that the action of $(\mathbb{Z}/N_r\mathbb{Z})^\times\cong (\mathbb{Z}/N\mathbb{Z})^\times \times (\mathbb{Z}/p^r\mathbb{Z})^\times$ via the diamond operator is compatible with the decomposition $A_{r}=A_{N}\times A_{p^r}$.

We remark that our notation for the operator defined by (\ref{jun8-2015-1258pm}) differs from that of Ohta in \cite{OhtaEC1} and \cite{OhtaCM}, where this operator is denoted by $T_n^*$. The reason for this difference in notation stems from the fact that Ohta identifies the cuspidal group $\mathcal{O}[C_r]$ with its dual group $\mathrm{Hom}(\mathcal{O}[C_r],\mathcal{O})$ via the perfect pairing $$ \mathcal{O}[C_r]\times \mathcal{O}[C_r]\rightarrow \mathcal{O}~:\left(\sum_{\mathfrak{c}\in C_r}a_\mathfrak{c}\mathfrak{c},\sum_{\mathfrak{c}\in C_r}b_\mathfrak{c}\mathfrak{c}\right)\mapsto \sum_{\mathfrak{c}\in C_r}a_\mathfrak{c}b_\mathfrak{c}.$$ One can show that under this identification the action of the adjoint operator $T_n^*$ is given by the double coset defining $T_n$ \cite[Proposition 3.4.12]{OhtaEC1}.

The above operators induce operators on $\mathcal{O}[C_{r}]$ via the projection mapping $\mathcal{O}[A_{r}]\twoheadrightarrow \mathcal{O}[C_{r}]$, which we will denote by the same symbols. Set $A_{r}^\mathrm{ord}=eA_{r}$ and $C_{r}^\mathrm{ord}=eC_{r}$.

%---------------
\begin{prop}[\cite{OhtaEC1} Prop. 4.3.4, \cite{OhtaCM} (2.2.3)]\label{feb5-2014-114pm} Let $$D_{r}~=~  \left\{\begin{bmatrix}a\\ c\end{bmatrix}_{N_r} \in A_{r}:p\mid c \right\}.$$ Then $\mathcal{O}[A_{r}^\mathrm{ord}]\cong \mathcal{O}[A_{r}]/\mathcal{O}[D_{r}]$.

\end{prop}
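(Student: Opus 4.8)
The plan is to realize the asserted isomorphism as the composite $e\mathcal{O}[A_r]\hookrightarrow\mathcal{O}[A_r]\twoheadrightarrow\mathcal{O}[A_r]/\mathcal{O}[D_r]$ and to show it is bijective by analyzing the action of $T_p$ (the operator of (\ref{jun8-2015-1258pm})) on the ``$p$-depth'' filtration of $\mathcal{O}[A_r]$. Since $\mathcal{O}[A_r]$ is a finite free $\mathcal{O}$-module on which $T_p$ acts $\mathcal{O}$-linearly, the usual construction of Hida's idempotent furnishes $e=\lim_n T_p^{n!}$ together with a $T_p$-stable decomposition $\mathcal{O}[A_r]=e\mathcal{O}[A_r]\oplus(1-e)\mathcal{O}[A_r]$, on whose summands $T_p$ is invertible, respectively topologically nilpotent. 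It therefore suffices to prove: (a) $\mathcal{O}[D_r]$ is $T_p$-stable and $T_p$ is topologically nilpotent on it; and (b) $T_p$ induces an automorphism of $\mathcal{O}[A_r]/\mathcal{O}[D_r]$. Granting these, one checks the elementary identity $e\mathcal{O}[A_r]\cap\mathcal{O}[D_r]=e\mathcal{O}[D_r]$; then (a) forces this intersection to vanish, while (b) makes the induced idempotent on $\mathcal{O}[A_r]/\mathcal{O}[D_r]$ the identity, so that $e\mathcal{O}[A_r]$ maps isomorphically onto $\mathcal{O}[A_r]/\mathcal{O}[D_r]$, which is exactly the statement $\mathcal{O}[A_r^{\mathrm{ord}}]\cong\mathcal{O}[A_r]/\mathcal{O}[D_r]$.

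To establish (a) and (b) I would first record the explicit action of $T_p$ on a single cusp. Using $\Gamma_r\left(\begin{smallmatrix}1&0\\0&p\end{smallmatrix}\right)\Gamma_r=\coprod_{i=0}^{p-1}\Gamma_r\left(\begin{smallmatrix}1&i\\0&p\end{smallmatrix}\right)$ and (\ref{jun8-2015-1258pm}), one computes $\left(\begin{smallmatrix}1&i\\0&p\end{smallmatrix}\right)\begin{bmatrix}a\\c\end{bmatrix}_{N_r}$ by forming the fraction $\tfrac{a+ic}{pc}$, reducing it to lowest terms, and reading off the resulting class in $A_r$. The outcome is: when $p\nmid c$, exactly one value of $i$ yields a cusp still satisfying $p\nmid c$ and the remaining $p-1$ values yield cusps with $p\mid c$; when $p\mid c$, all $p$ terms have $p\mid c$, the $p$-adic valuation of the reduced denominator strictly increasing until one reaches an ``$\infty$-type'' cusp (one with $p^r\mid c$), at which stage all $p$ coset representatives act identically on the class and produce a single cusp with multiplicity $p$. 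This immediately gives $T_p\mathcal{O}[D_r]\subseteq\mathcal{O}[D_r]$; filtering $\mathcal{O}[D_r]$ by the valuation of the denominator and iterating shows $T_p^{\,r}\mathcal{O}[D_r]\subseteq p\,\mathcal{O}[D_r]$, hence $T_p^{\,rk}\mathcal{O}[D_r]\subseteq p^{k}\mathcal{O}[D_r]$ and $e\mathcal{O}[D_r]=0$, which is (a). The same computation shows that modulo $\mathcal{O}[D_r]$ the operator $T_p$ carries the class of a cusp with $p\nmid c$ to the class of a single such cusp; one then checks that the resulting self-map of the finite set of classes with $p\nmid c$ is a bijection — with respect to $A_r=A_N\times A_{p^r}$ it is, up to the reduction step, a diamond-type permutation on the prime-to-$p$ component — so $T_p$ acts invertibly on $\mathcal{O}[A_r]/\mathcal{O}[D_r]$, which is (b).

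The main obstacle is the combinatorial bookkeeping in this $T_p$-computation. Reducing $\tfrac{a+ic}{pc}$ to lowest terms is a ``global'' operation that does not respect the decomposition $A_r=A_N\times A_{p^r}$, so some care is needed to track its simultaneous effect on both components; and the $\infty$-type cusps, precisely where the factor of $p$ responsible for nilpotence appears and where the coprimality of the chosen representatives is used, must be treated separately. Once the action of $T_p$ on cusps is pinned down, (a), (b), and hence the proposition follow formally; this is the computation carried out in \cite[\S4.3]{OhtaEC1}.
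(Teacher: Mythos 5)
Your proposal is correct and follows the argument of the cited reference \cite[Prop.~4.3.4]{OhtaEC1}; the paper itself states this result as a citation to Ohta rather than proving it. Reducing to the two claims that $T_p$ is topologically nilpotent on $\mathcal{O}[D_r]$ (via the $p$-depth filtration, with the multiplicity-$p$ collapse occurring at the cusps whose $c$-component vanishes in $A_{p^r}$) and that $T_p$ induces an automorphism of $\mathcal{O}[A_r]/\mathcal{O}[D_r]$ (acting as $\begin{bmatrix}a\\c\end{bmatrix}\mapsto\begin{bmatrix}p^{-1}a\\c\end{bmatrix}$ on cusps with $p\nmid c$) is precisely the computation carried out there.
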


Consider the set $$A_{r}^0 ~=~ \left\{\left( \begin{bmatrix}a\\ c\end{bmatrix}_{N}, \begin{bmatrix}0 \\ \omega(c)\end{bmatrix}_{p^r}\right)\in A_{r}: \begin{array}{l}
0 < c < Np,~\gcd(c,p)=1 \\ 
0 \leq a < \gcd(N,c)\end{array} \right\}.$$

\begin{prop}\label{jan31-2014-355pm} $\{\mathfrak{a}\in A_{r}:e\mathfrak{a}\neq 0\}=\{\sigma_\gamma\mathfrak{a}: \gamma \in U_1/U_r,\,\mathfrak{a}\in A_{r}^0 \}.$

\end{prop}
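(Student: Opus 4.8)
The plan is to unwind the equivalence relation defining $A_r$ together with the description of $A_r^{\mathrm{ord}}$ provided by Proposition~\ref{feb5-2014-114pm}, and then show that every surviving class has a unique representative in $A_r^0$ up to the diamond-type translation by $\sigma_\gamma$ for $\gamma\in U_1/U_r$. First I would invoke Proposition~\ref{feb5-2014-114pm}: since $\mathcal{O}[A_r^{\mathrm{ord}}]\cong\mathcal{O}[A_r]/\mathcal{O}[D_r]$, a class $\mathfrak{a}=\begin{bmatrix}a\\c\end{bmatrix}_{N_r}$ has $e\mathfrak{a}\ne 0$ precisely when $\mathfrak{a}\notin D_r$, i.e. when $p\nmid c$. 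So the left-hand set is exactly $\{\mathfrak{a}\in A_r : p\nmid c\}$, and using the decomposition $A_r=A_N\times A_{p^r}$ we may write such an $\mathfrak{a}$ as $\left(\begin{bmatrix}a\\c\end{bmatrix}_N,\begin{bmatrix}a\\c\end{bmatrix}_{p^r}\right)$ with $c$ a unit modulo $p$.

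Next I would analyze the $p$-part. Since $\gcd(c,p)=1$, the relation $\sim$ at level $p^r$ forces $\begin{bmatrix}a\\c\end{bmatrix}_{p^r}$ to depend only on $c\bmod p^r$ (the top entry is free modulo $\gcd(p^r,c)=1$, hence entirely free), so $\begin{bmatrix}a\\c\end{bmatrix}_{p^r}=\begin{bmatrix}0\\c\end{bmatrix}_{p^r}$. Writing $c\equiv \omega(c)u^{s(c)}\pmod{p^r}$ via the decomposition $\mathbb{Z}_p^\times = \mu_{p-1}\times U_1$, and recalling that $\sigma_\gamma$ for $\gamma = [d]\in U_1/U_r$ acts (compatibly with the diamond operator, by the congruence \eqref{feb14-2014-308pm}) on the $p$-component of a cusp class by scaling $c$ by $\gamma$, one sees that the $U_1/U_r$-orbit of $\begin{bmatrix}0\\c\end{bmatrix}_{p^r}$ contains the unique "normalized" representative $\begin{bmatrix}0\\\omega(c)\end{bmatrix}_{p^r}$. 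This is precisely the second coordinate appearing in the definition of $A_r^0$, and it pins down $c$ modulo $p$ uniquely (via its Teichmüller part).

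It remains to handle the prime-to-$p$ part $\begin{bmatrix}a\\c\end{bmatrix}_N$ and check that the ranges $0<c<Np$, $\gcd(c,p)=1$, $0\le a<\gcd(N,c)$ cut out exactly one representative per orbit. The integer $c$ with $0<c<Np$ and $p\nmid c$ is determined by the pair $(c\bmod N, c\bmod p)$, the latter fixed by the normalization above; and for fixed $c$, the relation $\sim$ at level $N$ says the top entry only matters modulo $\gcd(N,c)$, so $0\le a<\gcd(N,c)$ gives a unique representative. Conversely, applying $\sigma_\gamma$ moves the $p$-part off the normalized form unless $\gamma$ is trivial, and one checks $\sigma_\gamma$ acts trivially on the $N$-part, so distinct elements of $A_r^0$ lie in distinct orbits and all have $e\mathfrak{a}\ne 0$. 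Both inclusions then follow.

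The main obstacle I expect is bookkeeping the equivalence relation $\sim$ simultaneously at the two levels $N$ and $p^r$ — in particular being careful that the top-entry freedom is governed by $\gcd(M,y)$ with $M=N$ versus $M=p^r$, and confirming that $\sigma_\gamma$ (defined by the matrix congruence \eqref{feb14-2014-308pm}) really does act on cusp classes by the expected scaling $c\mapsto\gamma c$ on the $p$-component while fixing the $N$-component. Once the action of $\sigma_\gamma$ on $A_r$ is made explicit and matched with the diamond-operator action on $A_{p^r}$ (which is the source of the compatibility remarked just before Proposition~\ref{feb5-2014-114pm}), the rest is a routine uniqueness-of-representatives argument.
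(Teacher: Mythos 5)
Your proof is correct and follows essentially the same path as the paper's: you invoke Proposition~\ref{feb5-2014-114pm} to reduce to $p\nmid c$, use the decomposition $A_r \cong A_N \times A_{p^r}$ to see the $p^r$-top-entry is free, and then normalize the $p^r$-component via the Teichm\"uller factorization $c=[c]\omega(c)$ and the action of $\sigma_{[c]}$, exactly as in the paper (which writes $\mathfrak{a}=\sigma_{[c]}\cdot(\begin{bmatrix}a\\c\end{bmatrix}_N,\begin{bmatrix}0\\\omega(c)\end{bmatrix}_{p^r})$). The paper is terser, but your added bookkeeping about ranges and the distinctness of orbits just makes explicit what the paper leaves to the reader.
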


\begin{proof} From the definition of $A_r^0$ it is clear that the elements of $\{\sigma_\gamma\mathfrak{a}: \gamma \in U_1/U_r,\,\mathfrak{a}\in A_{r}^0 \}$ are distinct. Furthermore, by Proposition \ref{feb5-2014-114pm} and the fact that Hida's idempotent $e$ commutes with diamond operators, we know  that $$\{\sigma_\gamma\mathfrak{a}: \gamma \in U_1/U_r,\mathfrak{a}\in A_{r}^0 \}~\subset~ \{\mathfrak{a}\in A_{r}:e\mathfrak{a}\neq 0\}.$$ Let $\mathfrak{a}\in A_{r}$ with $e\mathfrak{a}\neq0$. Then once again by Proposition \ref{feb5-2014-114pm} we know $$\mathfrak{a} ~=~ \left(\begin{bmatrix}a \\ c\end{bmatrix}_{N},\begin{bmatrix}0 \\ c\end{bmatrix}_{p^r}\right)~ =~ \sigma_{[c]}\cdot\left(  \begin{bmatrix}a \\ c\end{bmatrix}_{N},\begin{bmatrix}0 \\ \omega(c)\end{bmatrix}_{p^r}\right),$$ where $\sigma_{[c]}$ is as defined in (\ref{feb14-2014-308pm}).
\end{proof}

\noindent We define $C_{r}^0 = A_{r}^0/\{\pm1\}$.

%~~~~~~~
\subsubsection{The ordinary \texorpdfstring{$\Lambda$}{Lambda}-adic cuspidal group}\label{feb18-2014-628pm}

For all $s\geq r\geq 1$, the map
\begin{align*}
\begin{bmatrix} a\\ c\end{bmatrix}_{N_s}^\prime \mapsto  \begin{bmatrix} a\\ c\end{bmatrix}_{N_r}^\prime
\end{align*}

\noindent induces a surjection $\mathcal{O}[C_s^\mathrm{ord}]=\mathcal{O}[U_1/U_s][C_s^0]\twoheadrightarrow \mathcal{O}[U_1/U_r][C_r^0]=\mathcal{O}[C_r^\mathrm{ord}]$. Furthermore, from Subsection \ref{feb14-2014-1243pm} we see that the Hecke action commutes with these surjections. We define the $\Lambda$-adic cuspidal group by $$C_\Lambda ~=~\varprojlim_{r\geq 1} \mathcal{O}[C_{r}^\mathrm{ord}] ~=~\varprojlim_{r\geq 1} \mathcal{O}[U_1/U_r][C_{r}^0].$$ From its definition we see that $C_\Lambda$ is a $\mathfrak{H}_\Lambda$-module.

%~~~~~~~~~
%~~~~~~~~~
%~~~~~~~~~
%~~~~~~~
\subsection{Residues of \texorpdfstring{$\Lambda$}{Lambda}-adic Eisenstein series}\label{feb3-2015-215pm}

For the remainder of this section we will assume that $\mathcal{O}$ contains the values of $\theta$ and $\psi$. Let $\mathcal{O}_\infty$ denote the ring of integers of a complete subfield of $\mathbb{C}_p$ containing \emph{all roots of unity}. We set $\Lambda_\infty=\mathcal{O}_\infty\llbracket X\rrbracket$.

In \cite{OhtaCM}, Ohta constructs the following exact sequence of $\mathfrak{H}_{\Lambda_\infty}$-modules
\begin{align*}
\begin{CD}
0 @>>>  S_{\Lambda_\infty} 
@>>> M_{\Lambda_\infty}@>\mathrm{Res}_{\Lambda}>> C_{\Lambda_\infty} @>>> 0,
\end{CD}
\end{align*}

\noindent where $\mathrm{Res}_\Lambda$ is the $\Lambda$-adic residue map of level $N$, defined explicitly by
\begin{align*}
\mathrm{Res}_{\Lambda}(F) &= \varprojlim_{r\geq 1} \left(\, \frac{1}{p^{r-1}}\sum_{\mathfrak{c}\in C_{r}}\left(\sum_{\epsilon\in \widehat{U_1/U_r}}\mathrm{Res}_{\mathfrak{c}}\!\left(v_{2,\epsilon}(F)|T_p^{-r}|w_{N_r}^{-1}\right)\right)\cdot e\mathfrak{c}\right),
\end{align*} with $\mathrm{Res}_\mathfrak{c}(f)$ denoting the residue of the differential $\omega_f = f\frac{dq}{q}$ at the cusp $\mathfrak{c}$. Our primary goal for the remainder of this section is to prove the following proposition.

%~~~~~~
\begin{prop}\label{feb3-2014-705pm} Suppose $\mathcal{E}_{\theta_0,\psi_0;t}\in M_\Lambda$. Then $$\mathrm{Res}_\Lambda(\mathcal{E}_{\theta_0,\psi_0;t}) ~=~ A_{\theta,\psi}\cdot  \mathfrak{e}_{\theta_0,\psi_0;t}$$ for an explicitly determined $\mathfrak{e}_{\theta_0,\psi_0;t}\in C_\Lambda$ with $\mathfrak{e}_{\theta_0,\psi_0;t}\not\in \mathfrak{m}C_\Lambda$. We set $\mathfrak{e}_{\theta_0,\psi_0}=\mathfrak{e}_{\theta_0,\psi_0;1}$.
 
\end{prop}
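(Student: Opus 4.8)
The strategy is to compute $\mathrm{Res}_\Lambda(\mathcal{E}_{\theta_0,\psi_0;t})$ by evaluating the defining limit cusp-by-cusp, using the explicit description of the ordinary cusps $A_r^0$ from Proposition~\ref{jan31-2014-355pm} and the transformation law of $p$-stabilized Eisenstein series under $w_{N_r}$. First I would reduce to a purely local computation of residues: for a fixed weight $k=2$ (or more safely, argue uniformly in $k\geq2$ and then specialize), the specialization $v_{2,\epsilon}(\mathcal{E}_{\theta_0,\psi_0;t})$ is, up to $\delta_{\theta,\psi}$, the classical Eisenstein series $E_2((\theta_0\epsilon\omega^{2-k})_{(p)},\psi_0;t)$ appearing in Theorem~\ref{dec31-2013-205pm}. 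The residues of such Eisenstein series at cusps are classical and computable in terms of Gauss sums and values $L(1-k,\cdot)$ of Dirichlet $L$-functions; I would invoke (or re-derive from the $q$-expansion at each cusp, obtained by conjugating by $w_{N_r}^{-1}$) the standard formula. The operator $T_p^{-r}$ simply contributes the $U_p$-eigenvalue $\psi_0(p)^{-r}$ (by Proposition~\ref{sep8-2014-1136am}(iv)), which combines with the normalization $1/p^{r-1}$ to produce a compatible system as $r\to\infty$.

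The second phase is to identify the $p$-adic interpolation. After assembling the residue at each ordinary cusp $\mathfrak{c}\in C_r^0$ and summing over $\epsilon\in\widehat{U_1/U_r}$, the sum over characters $\epsilon$ is an orthogonality sum that isolates a single residue and turns the values $L(1-k,(\theta_0\epsilon\omega^{2-k})_{(p)})$ into values of the Kubota--Leopoldt $p$-adic $L$-function $L_p(1-k,\theta\omega^2\epsilon)$, i.e.\ into the power series $G(X,\theta\omega^2) = F(u^{-1}(1+X)^{-1}-1,\theta\omega^2)$ defined in the $\Lambda$-adic Eisenstein series section. The relation between $G(X,\theta\omega^2)$ and $F(u^{-1}(1+X)^{-1}-1,\xi_2^{-1})$ — noting $\xi_2^{-1}=(\theta\psi^{-1})_2^{-1}$ and the Euler-factor bookkeeping — together with the Gauss sum factors $\prod_{\ell\mid f_\theta f_\psi,\,\ell\nmid f_\xi}((1+X)^{s(\ell)}-\xi(\ell)\ell^{-2})$ coming from the imprimitive-to-primitive comparison (parallel to Proposition~\ref{dec18-2013-1155am}), yields precisely the constant $A_{\theta,\psi}\in\Lambda_\xi$. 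The element $\mathfrak{e}_{\theta_0,\psi_0;t}\in C_\Lambda$ is then whatever combination of cusps $e\mathfrak{c}$ remains after factoring out $A_{\theta,\psi}$; it is explicit because all the cusp-dependence has been made explicit in the residue computation.

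The final point, $\mathfrak{e}_{\theta_0,\psi_0;t}\notin\mathfrak{m}C_\Lambda$, I would establish by exhibiting one ordinary cusp at which the residue of $\mathcal{E}_{\theta_0,\psi_0;t}$, divided by $A_{\theta,\psi}$, has a unit coefficient. The natural candidate is a cusp near $\infty$ (or near $0$, depending on which side $w_{N_r}^{-1}$ moves things), where the residue of an Eisenstein series is, up to sign and an evident unit, the leading constant term. Concretely, since one compares against the constant term $\tfrac{\psi_0(0)G(X,\theta\omega^2)}{2}$, and since the corresponding Gauss sum / ratio is a $p$-adic unit, the coefficient of that distinguished cusp in $\mathfrak{e}_{\theta_0,\psi_0;t}$ lies in $\Lambda^\times$, giving the non-vanishing mod $\mathfrak{m}$.

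\textbf{Main obstacle.} The technical heart — and the step most likely to be delicate — is the bookkeeping in the character-orthogonality sum over $\epsilon\in\widehat{U_1/U_r}$ combined with the $w_{N_r}^{-1}$-conjugation: keeping track of the conductors, the distinction between $\theta$, $\theta_0$, $\theta_{p-2}$, and $\xi_1,\xi_2$, the Teichm\"uller twists $\omega^{2-k}$, and the exceptional-case correction factor $\delta_{\theta,\psi}(X)$ (which is exactly the Euler factor at $p$ that $L_p$ is missing when $(\theta_0,\psi_0)=(\omega^{-2},\mathbbm 1)$). Getting every Gauss sum and every power of $p$ to land so that the cusp coefficients are genuinely integral (so that $\mathfrak{e}_{\theta_0,\psi_0;t}$ really lies in $C_\Lambda$, not just in $C_\Lambda\otimes Q(\Lambda)$) and so that $A_{\theta,\psi}$ comes out exactly as stated, with no spurious unit ambiguity, is where the real work lies; the surjectivity/exactness of Ohta's sequence guarantees integrality of $\mathrm{Res}_\Lambda(\mathcal{E})$ a priori, but matching it to the claimed closed form $A_{\theta,\psi}\cdot\mathfrak{e}$ requires the full explicit computation.
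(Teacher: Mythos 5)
Your overall strategy — restrict to ordinary cusps via Proposition~\ref{jan31-2014-355pm}, multiply classical constant terms (from Ohta's Gauss-sum formula) by cusp widths, handle $T_p^{-r}$ via the $U_p$-eigenvalue $\psi_0(p)$, and reassemble the family of $L$-values into $A_{\theta,\psi}$ — matches the paper's proof. The nontrivial point where your description becomes imprecise, and where I'd flag a real gap, is the step you call ``an orthogonality sum that isolates a single residue.'' In the paper's computation the crucial double sum runs over \emph{both} $\epsilon\in\widehat{U_1/U_r}$ \emph{and} $\gamma\in U_1/U_r$, weighted by $\epsilon^{-1}(\gamma)$ and with the diamond operator $\sigma_\gamma$ acting on the cusp; you omit the $\gamma$-sum and the diamond action entirely. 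The correct mechanism is not that the character sum isolates a term, but that
\[
\lambda\cdot\mathfrak{e}_{\infty,d_t,d_Q}^{x,y}
~=~\varprojlim_{r\geq 1}\left(\frac{1}{p^{r-1}}\sum_{\epsilon\in\widehat{U_1/U_r}}\sum_{\gamma\in U_1/U_r}\epsilon^{-1}(\gamma)\,\lambda\big(\epsilon(u)-1\big)\,\big(\sigma_\gamma\cdot e\,\mathfrak{c}_{r,d_t,d_Q}^{x,y}\big)\right)
\]
for any $\lambda\in\Lambda_\infty$: the Fourier coefficients $\epsilon^{-1}(\gamma)\lambda(\epsilon(u)-1)/p^{r-1}$ realize the image of $\lambda$ in the group ring $\mathcal{O}_\infty[U_1/U_r]$ acting on $C_\Lambda$ through the diamond operators. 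It is precisely the identity $A_{\theta,\psi}(\epsilon(u)-1)=\mathcal{L}_\epsilon$ (where $\mathcal{L}_\epsilon$ collects $\delta_{\theta,\psi}$, the Euler product, and $L_p(-1,\xi_2^{-1}\epsilon)$), together with the $\epsilon$-dependent Gauss-sum factor $\epsilon(f_\theta/f_\xi d_t x)$ being absorbed by a shift $(1+X)^{s(-f_\theta/f_\xi d_t x)}$, that lets one peel off $A_{\theta,\psi}$ times a genuine element of $C_\Lambda$. A secondary small inaccuracy: $\psi_0(p)^{-r}$ does not combine with $1/p^{r-1}$ to produce compatibility; rather $\psi_0(p)^{-r}$ cancels exactly against the $\psi_0(p^r)$ emerging from the ratio of Gauss sums, and the $1/p^{r-1}$ is consumed by the group-ring Fourier inversion above. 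Your final argument for $\mathfrak{e}_{\theta_0,\psi_0;t}\notin\mathfrak{m}C_\Lambda$ is correct in outline and agrees with the paper: one reads off that the coefficients of the (distinct) cusps $\mathfrak{e}_{\infty,d_t,d_Q}^{x,y}$ are units in $\Lambda$, being products of roots of unity, the unit $C$, integers coprime to $p$, and powers of $(1+X)$.
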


\noindent Let us determine exactly what computing this residue will entail. To simplify notation set 
$$E_{2}(\epsilon;t)~=~v_{2,\epsilon}(\mathcal{E}_{\theta_0,\psi_0;t})~=~\delta_{\theta,\psi}(\epsilon(u)-1)  \cdot E_2((\theta_0\epsilon)_{(p)},\psi_0;t)$$

\noindent with $E_{2}(\epsilon):=E_{2}(\epsilon;1)$. Using the fact that $E_2(\epsilon;t)$ is a $T_p$-eigenform with eigenvalue $\psi_0(p)$, and $E_{2}(\epsilon;t)|w_{N_r}^{-1} = (1/t)\cdot E_{2}(\epsilon)|w_{N_r/t}^{-1}$, we have
\begin{align*}
\mathrm{Res}_\Lambda(\mathcal{E}_{\theta_0,\psi_0;t})~=~\displaystyle{\varprojlim_{r\geq 1} \left(\frac{\psi_0(p)^{-r}}{tp^{r-1}}\sum_{\substack{\mathfrak{c}\in C_{r}\\\epsilon\in \widehat{U_1/U_r}}}\mathrm{Res}_{\mathfrak{c}}\left(E_{2}(\epsilon)|w_{N_r/t}^{-1}\right)\cdot e\mathfrak{c} \right)}.
\end{align*}

\noindent Because the above sum is over those cusps $\mathfrak{c}\in C_{r}$ satisfying $e\mathfrak{c}\neq 0$,  Proposition \ref{jan31-2014-355pm} tells us that the above can be written as
\begin{align*}
\displaystyle{ \varprojlim_{r\geq 1} \left(\frac{\psi_0(p)^{-r}}{tp^{r-1}}\sum_{\substack{\mathfrak{c}\in C_{r}^{0}\\\epsilon\in \widehat{U_1/U_r}\\\gamma \in U_1/U_r}}\!\!\epsilon^{-1}(\gamma)\cdot\mathrm{Res}_{w_{N_r/t}( \mathfrak{c})}\left(E_{2}(\epsilon)\right)\cdot  \left(\sigma_\gamma\cdot e\mathfrak{c}\right) \right),}
\end{align*}

\noindent where we have used the fact that diamond operators commute with the idempotent $e$, and the identity $f|w_{N_r/t}^{-1}|\sigma_\gamma = f|\sigma_\gamma^{-1}|w_{N_r/t}^{-1}$ holds for all $f\in M_2(\Gamma_1(N_r/t))_{\mathbb{C}_p}$. Our task is then to determine the residue of $E_2(\epsilon)$ at the cusps $w_{N_r/t}(\mathfrak{c})$ for $\mathfrak{c}\in C_{r}^0$. The following definition and proposition give us a simple means of doing so.

%~~~~~~~
\begin{definition} Let $\gamma \in \mathrm{SL}_2(\mathbb{Z})$ correspond to the cusp $\mathfrak{c}\in C_{r}$. The minimal choice of $W>0$ such that $$\begin{pmatrix}1 & W \\ 0 & 1\end{pmatrix}  ~\in~ \gamma^{-1}\Gamma_r\gamma$$ is called the width of the cusp $\mathfrak{c}$.

\end{definition}

%~~~
\begin{prop}[\cite{OhtaEC1}, Section 4.5]\label{jan28-2015-1038am} Let $\Gamma$ be a congruence subgroup of $\mathrm{SL}_2(\mathbb{Z})$ and let $f\in M_2(\Gamma)$. Then $\mathrm{Res}_\mathfrak{c}(f) = W_\mathfrak{c} \cdot a_0(f|_\mathfrak{c})$, where $W_\mathfrak{c}$ denotes the width of the cusp $\mathfrak{c}$ and $a_0(f|_\mathfrak{c})$ is the constant term of $f$ at $\mathfrak{c}$.

\end{prop}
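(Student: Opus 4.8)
The plan is to reduce the computation of the residue $\mathrm{Res}_\mathfrak{c}(f)$ to a computation at the cusp $\infty$, and there to compute the residue of the differential $\omega_f = f\,\frac{dq}{q}$ directly in terms of a local coordinate. First I would fix $\gamma\in\mathrm{SL}_2(\mathbb{Z})$ representing the cusp $\mathfrak{c}$, so that $f|_\gamma$ is a modular form (of weight $2$, with trivial automorphy factor in our normalization) for the conjugated congruence subgroup $\gamma^{-1}\Gamma\gamma$. By definition of the width $W_\mathfrak{c}$, the stabilizer of $\infty$ in $\gamma^{-1}\Gamma\gamma$ is generated (up to $\pm 1$) by $\left(\begin{smallmatrix}1 & W_\mathfrak{c}\\ 0 & 1\end{smallmatrix}\right)$, so $f|_\gamma$ is invariant under $z\mapsto z+W_\mathfrak{c}$ and admits a $q_W$-expansion in the variable $q_W = e^{2\pi i z/W_\mathfrak{c}}$, whose zeroth coefficient is by definition $a_0(f|_\mathfrak{c})$.

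The key step is the following local computation. The residue of the meromorphic differential $\omega_f$ at $\mathfrak{c}$ is, by definition, the residue of the pulled-back differential $\gamma^*\omega_f = (f|_\gamma)\,2\pi i\,dz$ at $\infty$, computed with respect to a local coordinate on the compactified curve at that cusp. The uniformizing coordinate at $\infty$ on the curve attached to $\gamma^{-1}\Gamma\gamma$ is $q_W = e^{2\pi i z/W_\mathfrak{c}}$, so $2\pi i\,dz = W_\mathfrak{c}\,\frac{dq_W}{q_W}$, and hence
\begin{align*}
\gamma^*\omega_f ~=~ (f|_\gamma)\cdot W_\mathfrak{c}\,\frac{dq_W}{q_W} ~=~ W_\mathfrak{c}\left(\sum_{n\geq 0} a_n(f|_\mathfrak{c})\,q_W^{n}\right)\frac{dq_W}{q_W}.
\end{align*}
Taking the residue at $q_W = 0$ picks out the coefficient of $\frac{dq_W}{q_W}$, namely $W_\mathfrak{c}\cdot a_0(f|_\mathfrak{c})$, which is exactly the claimed formula. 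One should be slightly careful about the weight-$2$ normalization of $|_\gamma$: with the convention $(f|_2\gamma)(z) = \det(\gamma)(cz+d)^{-2} f(\gamma z)$ and $\det\gamma = 1$, one has $\omega_{f}$ pulling back to $(f|_2\gamma)\,2\pi i\,dz$ on the nose since the factor $(cz+d)^{-2}$ is precisely what is needed for $f(z)\,dz$ to be $\mathrm{SL}_2(\mathbb{Z})$-equivariant as a differential; this is the one place where the computation must be carried out with care but it is routine.

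The main obstacle — though it is more of a bookkeeping point than a genuine difficulty — is matching conventions: ensuring that the "residue at a cusp" as used elsewhere in the paper (residue of $\omega_f = f\,\frac{dq}{q}$) agrees with the complex-analytic residue of $\gamma^*\omega_f$ at $\infty$ on the quotient curve, and that the factor of $2\pi i$ and the width $W_\mathfrak{c}$ appear on the correct side. Since this proposition is quoted from \cite[Section 4.5]{OhtaEC1}, I would either cite that reference directly or include the short computation above, noting that the factor $W_\mathfrak{c}$ is exactly the ratio between the local parameter $q_W$ at $\mathfrak{c}$ and the "naive" parameter $q = e^{2\pi i z}$ used to write $f\,\frac{dq}{q}$. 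No deeper input is needed.
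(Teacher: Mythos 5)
Your proof is correct, and it supplies the standard local computation that the paper itself omits (the paper simply cites \cite[Section 4.5]{OhtaEC1} without reproducing the argument). The reduction to $\infty$ via a choice of $\gamma\in\mathrm{SL}_2(\mathbb{Z})$, the identification $2\pi i\,dz = W_\mathfrak{c}\,\frac{dq_W}{q_W}$ with $q_W = e^{2\pi i z/W_\mathfrak{c}}$ the uniformizer at the cusp, and the observation that the weight-$2$ automorphy factor is exactly what makes $f\,dz$ transform as a differential, are all exactly the right ingredients, and the residue then reads off as $W_\mathfrak{c}\cdot a_0(f|_\mathfrak{c})$. The only point worth flagging is that $q_W$ is the honest local parameter precisely when the cusp is regular; this is automatic here since the groups in play are $\Gamma_1(Np^r)$ with $Np^r\geq p\geq 5$, for which no irregular cusps occur, so your argument applies without modification.
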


In Subsection \ref{feb1-2014-357pm} we will compute the constant term of $E_2(\epsilon)$ at the cusp $w_{N_r/t}(\mathfrak{c})$ for all $\mathfrak{c}\in C_{r}^0$. We will then determine the width of these cusps in Subsection \ref{jul3-2013-1106am}. In Subsection \ref{jul3-2013-1108am} we will put all of this together to prove Proposition \ref{feb3-2014-705pm}. Finally,  in Subsection \ref{jul3-2013-1109am} we will consider the image of Eisenstein series associated to imprimitive characters.

%~~~~~~~~~~~~~~~~~~~~~~~~~~~~~~~
\subsubsection{The constant term of Eisenstein series at the cusps}\label{feb1-2014-357pm}

For this subsection we fix an $\epsilon\in \widehat{U}_{1}$ and define $r\geq 1$ to be the integer satisfying $\ker(\epsilon) = U_r$. We begin by recalling the following result due to Ohta.

%%%%%%%%%
\begin{prop}[\cite{OhtaCM}, Prop. 2.5.5, Cor. 2.5.7]\label{jul8-2013-1125am} Let $\mathfrak{c}\in C_{r}$ with $$\mathfrak{c} ~=~ \begin{bmatrix}a \\ c\end{bmatrix}_{N_r}^\prime.$$

\noindent If $f_{\theta_0\epsilon} \mid c$, then the constant term of $E_2(\epsilon)$ at the cusp $\mathfrak{c}$ is given by 
\begin{multline*}
\frac{1}{2}\frac{g(\xi\epsilon^{-1})}{g((\theta_0\epsilon)^{-1})}\left(\frac{f_{\theta\epsilon}}{f_{\xi\epsilon^{-1}}} \right)^2\hspace{-5pt}\psi_0\!\left(-\frac{c}{f_{\theta\epsilon}}\right)(\theta_0\epsilon)^{-1}(a)\,\delta_{\theta,\psi}(\epsilon(u)-1)\\
\cdot \left( \prod_{\substack{\ell \mid f_{\theta\epsilon} f_{\psi} \\\ell\nmid f_{\xi\epsilon^{-1}}  }}\left(1-(\xi\epsilon^{-1})(\ell)\ell^{-2}\right) \right) L_p(-1,\xi_2^{-1}\epsilon)
\end{multline*}

\noindent where $g(\chi)$ is the Gauss sum of the character $\chi$. If $f_{\theta_0\epsilon} \nmid c$, then the constant term is $0$.

\end{prop}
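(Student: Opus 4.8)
The plan is to prove this by the classical method: realize $E_2(\epsilon)$ inside the span of the ``vector'' weight-$2$ Eisenstein series $G_2^{\vec v}$, whose transformation under $\mathrm{SL}_2(\mathbb{Z})$ is the tautological $G_2^{\vec v}|_2\gamma=G_2^{\vec v\gamma}$, and then evaluate the resulting finite character sum at the cusp $\mathfrak{c}$.

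First I would reduce to these standard series. By Theorem~\ref{dec31-2013-205pm}, up to the unit $\delta_{\theta,\psi}(\epsilon(u)-1)$, the form $E_2(\epsilon)$ is the classical $p$-stabilized Eisenstein series $E_2((\theta_0\epsilon)_{(p)},\psi_0)$ on a congruence group of level $M:=f_\theta f_\psi p^r/\gcd(f_\theta,p)$; this is holomorphic of weight $2$ because $\theta_0\psi_0$ is even and $(\theta_0\epsilon,\psi_0)$ is a nontrivial pair (should the borderline weight-$2$ case require care, one passes through Hecke's trick, forming the real-analytic series $E_2(z,s;\chi,\chi')$, continuing to $s=0$, and checking that the non-holomorphic term drops out). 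For $\vec v=(v_1,v_2)\in(\mathbb{Z}/M\mathbb{Z})^2$ one takes $G_2^{\vec v}$ normalized so that its $q$-expansion at $\infty$ has constant term vanishing unless $v_1\equiv 0\pmod M$ and otherwise equal to an explicit partial-zeta value depending only on $v_2\bmod M$; a Gauss-sum computation then writes $E_2((\theta_0\epsilon)_{(p)},\psi_0)$ as $\sum_{\vec v}(\theta_0\epsilon)^{-1}(v_2)\psi_0(v_1)G_2^{\vec v}$ with $v_1$ running mod $f_{\psi_0}$ and $v_2$ mod $f_{\theta_0\epsilon}$, then inflated to level $M$. The inflation at the primes $\ell\mid f_\theta f_\psi$ with $\ell\nmid f_{\xi\epsilon^{-1}}$, and at $p$, is exactly what will later produce the Euler product $\prod_\ell(1-(\xi\epsilon^{-1})(\ell)\ell^{-2})$ and reflect the $p$-stabilization $E_2(\epsilon)|T_p=\psi_0(p)E_2(\epsilon)$.

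Next I would evaluate at $\mathfrak{c}$. Choosing $\gamma_\mathfrak{c}=\begin{pmatrix}a&b\\c&d\end{pmatrix}\in\mathrm{SL}_2(\mathbb{Z})$ representing the cusp $\mathfrak{c}=\begin{bmatrix}a\\c\end{bmatrix}_{N_r}^\prime$ and applying $|_2\gamma_\mathfrak{c}$ to the linear combination using $G_2^{\vec v}|\gamma_\mathfrak{c}=G_2^{\vec v\gamma_\mathfrak{c}}$, one reads off the constant term at $\infty$: only vectors $\vec v$ with $v_1a+v_2c\equiv 0\pmod M$ contribute, and for those the contribution depends on $v_1b+v_2d$. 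Solving the constraint for $v_1$ in terms of $v_2$ produces the factor $\psi_0(-c/f_{\theta\epsilon})$; orthogonality of Dirichlet characters in the remaining sum over $v_2$ then forces vanishing unless $f_{\theta_0\epsilon}\mid c$, and when $f_{\theta_0\epsilon}\mid c$ collapses the sum to give $(\theta_0\epsilon)^{-1}(a)$, the Gauss sum $g((\theta_0\epsilon)^{-1})$ in the denominator, the conductor-rescaling factor $(f_{\theta\epsilon}/f_{\xi\epsilon^{-1}})^2$ (the exponent being the weight $2$), and a partial $L$-value for $\xi\epsilon^{-1}$ which, upon completing its Euler product away from $f_{\xi\epsilon^{-1}}$, carries the numerator $g(\xi\epsilon^{-1})$ and the factors $\prod_{\ell\mid f_{\theta\epsilon}f_\psi,\ \ell\nmid f_{\xi\epsilon^{-1}}}(1-(\xi\epsilon^{-1})(\ell)\ell^{-2})$. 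The $\{\pm1\}$-ambiguity in $\gamma_\mathfrak{c}$ acts on the answer by $(\theta_0\epsilon\psi_0)(-1)=1$, so the result is well defined.

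Finally, the surviving special value is $\tfrac12$ times a primitive Dirichlet $L$-value $L(-1,\chi)$, with $\chi$ the primitive character attached to $\xi\epsilon^{-1}$; rewriting $\chi$ as $(\xi_2^{-1}\omega^{2-k}\epsilon)_{(p)}$ at $k=2$ and applying the interpolation identity $G(\epsilon(u)u^{k-2}-1,\varphi\omega^2)=L_p(1-k,\varphi\omega^2\epsilon)=L(1-k,(\varphi\omega^{2-k}\epsilon)_{(p)})$ recorded just before this subsection gives $L_p(-1,\xi_2^{-1}\epsilon)$; reinstating $\delta_{\theta,\psi}(\epsilon(u)-1)$ and collecting constants yields the displayed formula, the case $f_{\theta_0\epsilon}\nmid c$ having been settled by the orthogonality above. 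I expect the main obstacle to be the bookkeeping in the middle step: normalizing the Gauss sums so that the primitive-to-imprimitive passage yields precisely $g(\xi\epsilon^{-1})/g((\theta_0\epsilon)^{-1})$ together with the conductor ratio, pinning down the exact twisting constants $\psi_0(-c/f_{\theta\epsilon})$ and $(\theta_0\epsilon)^{-1}(a)$ and the cancellation of the $\{\pm1\}$-ambiguity, and matching the imprimitive Euler factors at $\ell\mid f_{\theta\epsilon}f_\psi$ — all while carrying the $p$-stabilization through. An efficient alternative, essentially Ohta's, is to compute the constant term only at $\infty$, where it is the known $\tfrac12$-multiple of the $L$-value, and propagate it to a general cusp via the Atkin--Lehner operator $w_M$ and the diamond operators, which packages the same character sum more transparently.
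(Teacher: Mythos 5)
The paper supplies no proof of this proposition: it is quoted verbatim from Ohta \cite{OhtaCM} (Prop.~2.5.5 and Cor.~2.5.7) and then used as a black box, so there is no in-paper argument against which to check your sketch. Judged on its own merits, your plan is a reasonable reconstruction. Expanding the $p$-stabilized Eisenstein series over the tautologically transforming series $G_2^{\vec v}$, applying $|_2\gamma_{\mathfrak{c}}$, and collapsing the resulting character sums is a standard route to such constant-term formulas, and you rightly flag both the weight-$2$ convergence issue (Hecke's trick) and the fact that essentially the entire content of the statement lives in the normalization bookkeeping you defer. The ``efficient alternative'' you mention at the end --- computing the constant term at $\infty$ and propagating it to the other cusps via $w_M$ and the diamond operators --- is in fact closer to Ohta's actual derivation. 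To turn the sketch into a proof you would need to execute the deferred computation with care: the vanishing criterion $f_{\theta_0\epsilon}\mid c$ arises from the solvability of the linear constraint on $\vec v$ together with the support of the inducing characters, not just from orthogonality in a single coordinate as your wording suggests; the conductor ratio $(f_{\theta\epsilon}/f_{\xi\epsilon^{-1}})^2$ and the Gauss-sum ratio $g(\xi\epsilon^{-1})/g((\theta_0\epsilon)^{-1})$ must come out with exactly this normalization, which depends on how one sets up the primitive-to-imprimitive passage; and the replacement of the primitive classical $L(-1,\cdot)$-value by $L_p(-1,\xi_2^{-1}\epsilon)$ has to be seen to absorb precisely the Euler factor at $p$ that your $p$-stabilization introduces. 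Those identities are where the proof actually lives, and the proposal does not yet verify them.
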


Let $N = \tilde{f}_\theta PQ t$, where $\tilde{f}_\theta = f_\theta/\gcd(f_\theta,p)$ and
\begin{align*}
P ~:=~ \prod_{\substack{\ell \mid f_{\psi}\\ \ell\,\mathrm{prime}  }}     \ell^{\mathrm{ord}_\ell(N/\tilde{f}_{\theta} t)}.
\end{align*}

\noindent Let $\mathfrak{c}\in C_{r}^0$  with 
\begin{align*}
\displaystyle{  \mathfrak{c}~=~\begin{bmatrix}  a \\ c\end{bmatrix}_{N_r}^\prime ~=~ \left(\begin{bmatrix}  a \\ c\end{bmatrix}_{N}, \begin{bmatrix}  0 \\ \omega(c)\end{bmatrix}_{p^r}\right)^\prime.}
\end{align*}

\noindent By the definition of $C_{r}^0$, we know that $0\leq a<\gcd(N,c)$ and $0<c<N_1$ with $\gcd(c,ap)=1$. The cusp we are interested in is $w_{N_r/t}(\mathfrak{c})$, which is given by 

\begin{center}$\displaystyle{ w_{N_r/t}(\mathfrak{c}) ~=~\begin{bmatrix}-c/\Delta \\ aN_r/\Delta t \end{bmatrix}_{N_r}^\prime   ~=~\begin{bmatrix}-c/\Delta \\ a\tilde{f}_\theta PQp^r/\Delta \end{bmatrix}_{N_r}^\prime  }$\end{center}

\noindent where $\Delta := \text{gcd}(aN_r/t,c)=\text{gcd}(\tilde{f}_\theta PQ,c)$. 
By Proposition \ref{jul8-2013-1125am}, in order for the constant term of $E_2(\epsilon)$ to be non-zero at the cusp $w_{N_r/t}(\mathfrak{c})$, the following conditions must be satisfied:

\begin{enumerate}[itemsep=0em]

\item[(1)] $\displaystyle{f_{\theta_0\epsilon}~\mid~ a\tilde{f}_\theta PQp^r/\Delta}$

\item[(2)]
$\displaystyle{    \psi_0\left(a\tilde{f}_\theta PQp^r/f_{\theta\epsilon} \Delta\right) ~\neq~ 0.}$

\item[(3)]
$\displaystyle{    (\theta_0\epsilon)^{-1}\left(c/\Delta\right)~\neq~ 0.}$

\end{enumerate}

We want to unravel these conditions in order to get characterizations of $a,~c$ and $\Delta$. Let us begin with $\Delta$. Note that since $f_{\theta\epsilon}=\tilde{f}_\theta p^r$ and $\text{gcd}(c,ap)=1$, condition (1) is equivalent to $\Delta \mid PQ$. Furthermore, by expanding condition (2), $$\displaystyle{\psi_0(a)\,\psi_0\!\left(\frac{PQ}{\Delta}\right)~\neq~ 0}$$ we see that we must also have $P\mid \Delta$. Hence, $\Delta = Pd_Q$ for some $d_Q\mid Q$.

Next we consider $c$. First we write $c = \gcd(N,c)\cdot x$ for some $x$ satisfying $0<x<N_1/\gcd(N,c)$ with $\gcd(x,N_1/\gcd(N,c))=1$. The quotient $\gcd(N,c)/\Delta = \gcd(N,c)/Pd_Q$ must be a factor of $t$, which we denote by $d_t$. Then $c = d_td_QPx$. Moreover, by condition $(3)$ the quotient $c/\Delta = d_tx$ must be prime to $f_\theta p$. Since $\gcd(x,f_\theta p)\mid \gcd(x,N_1/Pd_Qd_t)=1$, we must have $\gcd(d_t,f_{\theta} p)=1$.

Finally, we consider $a$. By definition we know that $0\leq a < P d_t d_Q$. Furthermore, we have $\gcd(a,Pd_td_Q)\mid \gcd(a,Pd_td_Qy)=\gcd(a,c)=1$.

Putting all of this together, we see that if the constant term of $E_2(\epsilon)$ is to be non-zero at $w_{N_r/t}(\mathfrak{c})$, the cusp $\mathfrak{c}\in C_r^0$ can be written as
\begin{align}\label{feb1-2014-239pm}
\displaystyle{ \mathfrak{c} ~=~\left(\begin{bmatrix}y \\ d_td_QP x\end{bmatrix}_{N},  \begin{bmatrix}0 \\ \omega(d_td_QP x) \end{bmatrix}_{p^r}\right)^\prime  }
\end{align}

\noindent where
\begin{enumerate}[itemsep=0em]

\item[(1)] $d_t\mid t$ and $d_Q \mid Q$ with $\gcd(d_t,f_\theta p)=1$,

\item[(2)] $0<x<N_1/d_td_QP$ with $\gcd(x,N_1/d_td_QP)=1$,

\item[(3)] $0<y<d_td_QP$ with $\gcd(y,d_td_QPx)=1$.

\end{enumerate}

\noindent To simplify subsequent notation, we will denote the set of tuples $(d_t,d_Q,x,y)$ satisfying the above conditions for a given $t$ by $\mathscr{S}_t$.

Having characterized the cusps in $C_r^0$ at which the constant term of the Eisenstein series $E_2(\epsilon)$ can be non-trivial, we will now use Proposition \ref{jul8-2013-1125am} to determine the constant term of $E_2(\epsilon)$ at these cusps.

%~~~~~~~~~~~~~~~~~
\begin{lemma}\label{feb3-2014-446pm} Suppose $\theta = \chi\omega^i$, with $p\nmid M_\chi$. Let $t\geq 1$ be coprime to $p$ with $f_\theta f_\psi t\nmid Np$. If $\mathfrak{c}\in C_{r}^0$ is the cusp associated to the tuple $(d_t,d_Q,x,y)\in\mathscr{S}_t$ by $(\ref{feb1-2014-239pm})$, then the constant term of $E_2(\epsilon)$ at $w_{N_r/t}(\mathfrak{c})$ is $$C\,\epsilon\left(\frac{f_\theta }{f_{\xi}d_tx}\right)\psi_0\!\left(\frac{yQp^r}{d_Q}\right)\theta_0^{-1}\!(d_tx)\,  \delta_{\theta,\psi}(\epsilon(u) - 1) \left(\prod_{\substack{\ell\,\mid\, f_\theta f_\psi \\ \ell\,\nmid\, f_{\xi}}}(1-(\xi\epsilon^{-1})(\ell)\ell^{-2})\right)L_p(-1,\xi_2^{-1}\epsilon)$$

\noindent where $C$ is a $p$-adic unit in some finite cyclotomic extension of $\mathbb{Q}_p$ depending only on $\theta_0$ and $\psi_0$.

\end{lemma}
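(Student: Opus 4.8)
The plan is to specialize Ohta's formula for the constant term of $E_2(\epsilon)$ at a general cusp (Proposition \ref{jul8-2013-1125am}) to the specific cusp $w_{N_r/t}(\mathfrak{c})$, using the explicit parametrization of $\mathfrak{c}$ by a tuple $(d_t,d_Q,x,y)\in\mathscr{S}_t$ worked out in the paragraphs above via $(\ref{feb1-2014-239pm})$. First I would record the numerator and denominator data: the cusp $w_{N_r/t}(\mathfrak{c})$ has lower entry $c' := a\tilde f_\theta PQ p^r/\Delta = y\tilde f_\theta Q p^r/d_Q$ (after substituting $a=y$, $\Delta = Pd_Q$, $P\mid\Delta$) and upper entry $a' := -c/\Delta = -d_t x$. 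Since $f_{\theta_0\epsilon} = \tilde f_\theta p^r$ divides $c'$, Proposition \ref{jul8-2013-1125am} applies and yields the constant term as the product of: the Gauss-sum ratio $\tfrac12 g(\xi\epsilon^{-1})/g((\theta_0\epsilon)^{-1})$ times $(f_{\theta\epsilon}/f_{\xi\epsilon^{-1}})^2$; the value $\psi_0(-c'/f_{\theta\epsilon})$; the value $(\theta_0\epsilon)^{-1}(a')$; the factor $\delta_{\theta,\psi}(\epsilon(u)-1)$; the Euler-factor product over $\ell\mid f_{\theta\epsilon}f_\psi$, $\ell\nmid f_{\xi\epsilon^{-1}}$; and $L_p(-1,\xi_2^{-1}\epsilon)$.

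Next I would simplify each factor. For the Euler product, since $\epsilon$ has $p$-power conductor, the primes $\ell$ dividing $f_{\theta\epsilon}f_\psi$ but not $f_{\xi\epsilon^{-1}}$ are exactly the primes $\ell\mid f_\theta f_\psi$, $\ell\nmid f_\xi$ (the prime $p$ contributes nothing new once $\epsilon\ne\mathbbm 1$, and for $\epsilon=\mathbbm 1$ one checks directly), and $\xi\epsilon^{-1}$ agrees with $\xi$ on these $\ell$; this gives the asserted product $\prod_{\ell\mid f_\theta f_\psi,\ \ell\nmid f_\xi}(1-(\xi\epsilon^{-1})(\ell)\ell^{-2})$. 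For the character values: write $\theta=\chi\omega^i$ with $p\nmid M_\chi$, so $\theta_0 = (\chi\omega^{i})_0$ and on $p'$-integers $(\theta_0\epsilon)^{-1}(a') = \theta_0^{-1}(-d_t x)\epsilon^{-1}(a')$; absorbing $\theta_0^{-1}(-1)$ into the unit constant, this is $\theta_0^{-1}(d_t x)\cdot\epsilon^{-1}(-d_t x)$. Then $\psi_0(-c'/f_{\theta\epsilon}) = \psi_0(-y\tilde f_\theta Qp^r/(d_Q\tilde f_\theta)) = \psi_0(-1)\psi_0(yQp^r/d_Q)$, again folding $\psi_0(-1)$ into the constant. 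The Gauss-sum/conductor prefactor $\tfrac12 g(\xi\epsilon^{-1})g((\theta_0\epsilon)^{-1})^{-1}(f_{\theta\epsilon}/f_{\xi\epsilon^{-1}})^2$ must be shown to equal $C$ times a quantity depending only on $\epsilon$ through $\epsilon(f_\theta/(f_\xi d_t x))$ and an overall $p$-adic unit; this is where the known behavior of Gauss sums under twisting by a character of $p$-power conductor — $g(\xi\epsilon^{-1})$ and $g((\theta_0\epsilon)^{-1})$ each factor as (Gauss sum of the prime-to-$p$ part) times (Gauss sum of $\epsilon^{\pm1}$, a $p$-adic unit) times an explicit $\epsilon$-value — comes in, and after cancellation the $\epsilon$-dependence collapses to $\epsilon(f_\theta/f_\xi)$ times $\epsilon^{-1}(d_t x)$ which combines with the leftover $\epsilon^{-1}(d_t x)$ from step (3)... wait, more carefully, the residual $\epsilon$-factors from the Gauss sums together with $\epsilon^{-1}(-d_tx)$ reorganize into $\epsilon(f_\theta/(f_\xi d_t x))$, matching the statement. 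Finally I would note $L_p(-1,\xi_2^{-1}\epsilon)$ is already in the desired form and that $C$, being a product of prime-to-$p$ Gauss sums, conductors, and sign values, is a $p$-adic unit in a finite cyclotomic extension depending only on $\theta_0,\psi_0$.

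The main obstacle I anticipate is the bookkeeping in the Gauss-sum simplification: tracking exactly which $\epsilon$-values survive from $g(\xi\epsilon^{-1})/g((\theta_0\epsilon)^{-1})$ and verifying that they combine with the $(\theta_0\epsilon)^{-1}(a')$ and $\psi_0$-factors to produce precisely $\epsilon(f_\theta/(f_\xi d_t x))\psi_0(yQp^r/d_Q)\theta_0^{-1}(d_tx)$ with all remaining constants $p$-adic units — in particular one must be careful that $\epsilon$ evaluated at various prime-to-$p$ integers makes sense (it doesn't directly, so these must cancel or be reinterpreted via $\epsilon\circ[\,\cdot\,]$, i.e. the $s(\cdot)$ maps), which forces one to be precise about how $\epsilon$ is being applied to non-$p$-power arguments. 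A secondary point requiring care is the degenerate case $\epsilon=\mathbbm 1$ ($r=1$) where $f_{\theta_0\epsilon}$ could fail to capture a factor of $p$ and the Euler product over $\ell=p$ needs separate inspection; I would handle it by direct substitution. Modulo this, the argument is a direct, if lengthy, specialization of Proposition \ref{jul8-2013-1125am}.
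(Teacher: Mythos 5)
Your proposal follows the paper's proof essentially step by step: specialize Proposition \ref{jul8-2013-1125am} to the cusp $w_{N_r/t}(\mathfrak{c})$, factor the Gauss sums using multiplicativity over coprime conductors so that the $p$-part sums $g((\omega^i\epsilon)^{-1})$ cancel and the leftover $\epsilon$-values reorganize (together with $(\theta_0\epsilon)^{-1}(-d_tx)$) into $\epsilon(f_\theta/(f_\xi d_t x))$, and collect the $\epsilon$-independent prime-to-$p$ Gauss sum and conductor factors into the unit $C$. The concerns you flag — the meaning of $\epsilon$ at prime-to-$p$ arguments, which the paper resolves via the fact that $\epsilon$ has $p$-power order so $\epsilon(a)=\epsilon([a])$, and the bookkeeping in the Gauss-sum cancellation — are exactly the points the paper's computation handles, so the route is the same.
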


\begin{proof} We have
\begin{align*}
\displaystyle{ w_{N_r/t}(\mathfrak{c}) ~=~ \begin{bmatrix}-d_tx\\ y\tilde{f}_\theta Qp^r/d_Q\end{bmatrix}_{N_r}^\prime}.
\end{align*}

\noindent By Proposition \ref{jul8-2013-1125am} the constant coefficient of $E_2(\epsilon)$ at the cusp $w_{N_r/t}(\mathfrak{c})$ is given by
\begin{multline}\label{jun26-2013-352pm}
\displaystyle{ \frac{1}{2}\frac{g(\xi\epsilon^{-1})}{g((\theta_0\epsilon)^{-1})} \left(\frac{f_{\theta\epsilon}}{f_{\xi\epsilon^{-1}}}\right)^2\!\!\!\psi_0\!\left(\frac{-yQ}{d_Q}\right)(\theta_0\epsilon)^{-1}(-d_tx)}\\
\displaystyle{\cdot \left(\prod_{\substack{\ell\mid f_{\theta\epsilon}f_{\psi}\\ \ell\nmid f_{\xi\epsilon^{-1}}}}(1-(\xi\epsilon^{-1})(\ell)\ell^{-2})\right)L_p(-1,\xi_2^{-1}\epsilon)   }.
\end{multline}

Let us consider the first term of (\ref{jun26-2013-352pm}) involving Gauss sums. If $\eta$ and $\varphi$ are two primitive Dirichlet characters with $\gcd(f_\eta,f_\varphi)=1$, we have $g(\eta\varphi) = g(\eta)g(\varphi)\eta(f_{\varphi})\varphi(f_{\eta}).$ Consequently, $$ \frac{g(\xi\epsilon^{-1})}{g((\theta_0\epsilon)^{-1})} ~=~\frac{g((\psi\chi^{-1})_0)}{g(\chi_0^{-1})} \cdot\frac{g((\omega^i\epsilon)^{-1})}{g((\omega^i\epsilon)^{-1})}\cdot \frac{(\psi\chi^{-1})_0(p^r)}{\chi_0^{-1}(p^r)} \cdot\frac{(\omega^i\epsilon)^{-1}\left(f_{\psi\chi^{-1}}\right)}{(\omega^i\epsilon)^{-1}(f_\chi)}$$$$=~ \frac{g((\psi\chi^{-1})_0)}{g(\chi_0^{-1})}\cdot\psi_0(p^r)\cdot \omega^i\left(\frac{f_\chi}{f_{\psi\chi^{-1}}}\right)\cdot \epsilon\left(\frac{f_\chi}{f_{\psi\chi^{-1}}}\right).$$

\noindent The second term of (\ref{jun26-2013-352pm}) can be written as $$\left(\frac{f_{\theta\epsilon}}{f_{\xi\epsilon^{-1}}}\right)^2 ~=~ \left(\frac{f_\chi\cdot f_{\omega^i\epsilon}   }{f_{\psi\chi^{-1}}\cdot f_{\omega^i\epsilon}}\right)^2~=~  \left(\frac{f_\chi }{f_{\psi\chi^{-1}}}\right)^2.$$

\noindent Recalling that $\gcd(f_\psi d_tx,p)=1$, the terms involving $(\theta_0\epsilon)^{-1}$ and $\psi_0$ can be written as, $$\psi_0\!\left(\frac{-yQ}{d_Q}\right)(\theta_0\epsilon)^{-1}(-d_tx) ~=~\psi_0\!\left(\frac{yQ}{d_Q}\right)\theta_0^{-1}(d_tx)\epsilon^{-1}(d_tx),$$

\noindent where we are using the fact that $(\theta_0\psi_0)(-1)=1=\epsilon(-1)$. Putting all of this together, we see that the first half of (\ref{jun26-2013-352pm}) can be written as $$C\cdot \epsilon\left(\frac{f_\theta }{f_{\xi}d_tx}\right)\cdot \theta_0^{-1}(d_tx)\cdot \psi_0\left(\frac{yQp^r}{d_Q}\right)$$

\noindent where $$C ~:=~\frac{1}{2}\cdot\frac{g((\psi\chi^{-1})_0)}{g(\chi_0^{-1})}\cdot \omega^i\!\left(\frac{f_\theta}{f_{\xi}}\right)\cdot\left(\frac{f_\theta}{f_{\xi}}\right)^2$$

\noindent is a $p$-adic unit in some finite cyclotomic extension of $\mathbb{Q}_p$ that depends only on $\theta_0$ and $\psi_0$.

Finally, since $f_{\theta\epsilon} f_{\psi} = f_\theta f_\psi p^r$ and $f_{\xi\epsilon}=f_{\xi}p^{r}$, we have $\{\ell~\mathrm{prime}: \ell \mid f_{\theta\epsilon}f_\psi, \ell\nmid f_{\xi\epsilon} \} = \{\ell~\mathrm{prime}: \ell \mid f_\theta f_\psi, \ell\nmid f_\xi \}$
\end{proof}

%------------------------------------------------
\subsubsection{Width of the cusps}\label{jul3-2013-1106am}

In this subsection we would like to determine the width of the cusp $w_{N_r/t}(\mathfrak{c})$, where $\mathfrak{c}$ is associated to the tuple $(d_t,d_Q,x,y)\in\mathscr{S}_t$, i.e.$$\mathfrak{c} ~=~ \begin{bmatrix}y \\ d_td_QPx  \end{bmatrix}_{N_r}^\prime.$$

\noindent Let $\gamma\in \mathrm{SL}_2(\mathbb{Z})$ correspond to $w_{N_r/t}(\mathfrak{c})$, that is,

\begin{center}$\gamma ~=~ \begin{pmatrix}-d_tx & * \\ y\tilde{f}_\theta Qp^r/ d_Q & * \end{pmatrix} $.\end{center}

\noindent Let $W$ be the width of the cusp $w_{N_r/t}(\mathfrak{c})$. Then by definition
\begin{align*}
\begin{pmatrix}1  - d_tx\left(\frac{y\tilde{f}_\theta Qp^r}{d_Q}\right)W &*\\
-\left(\frac{y\tilde{f}_\theta Qp^r}{d_Q}\right)^2W  &1 + d_tx\left(\frac{y\tilde{f}_\theta Qp^r}{d_Q}\right)W   \end{pmatrix} ~=~   \gamma \begin{pmatrix}1 & W \\ 0 & 1\end{pmatrix} \gamma^{-1}  ~\in~ \Gamma_r.
\end{align*}

\noindent Therefore, we must have $$d_tx\left(\frac{y\tilde{f}_\theta Qp^r}{d_Q}\right) W ~\equiv~ 0~(\mathrm{mod}\,N_r)\hspace{.25in}\text{and}\hspace{.25in}\left(\frac{y\tilde{f}_\theta Qp^r}{d_Q}\right)^2 W ~\equiv~ 0~(\mathrm{mod}\,N_r).$$ However, since $\gamma\in \mathrm{SL}_2(\mathbb{Z})$ it must be the case that $$\gcd\left(d_tx,\frac{y\tilde{f}_\theta Qp^r}{d_Q}\right)~=~1,$$ which implies

\begin{center}$\displaystyle{\left(\frac{y\tilde{f}_\theta Qp^r}{d_Q}\right) W  ~\equiv~ 0~(\mathrm{mod}\,N_r)}$.\end{center}

\noindent The smallest value of $W$ satisfying the above congruence is $W= t d_QP/\text{gcd}(y,t)$, which is a $p$-adic unit dependent on the cusp $\mathfrak{c}$.

%----------------------
\subsubsection{Proof of Proposition $\text{\ref{feb3-2014-705pm}}$}\label{jul3-2013-1108am}

Recall that the level $r\geq 1$ component of the projective limit defining $\mathrm{Res}_\Lambda(\mathcal{E}_{\theta_0,\psi_0;t})$ is given by
\begin{align}\label{sep29-2014-950am}
\frac{\psi_0(p)^{-r}}{tp^{r-1}}\sum_{\substack{\mathfrak{c}\in C_{r}^{0}\\\epsilon\in \widehat{U_1/U_r}\\\gamma \in U_1/U_r}}\epsilon^{-1}(\gamma)\,\mathrm{Res}_{w_{N_r/t}( \mathfrak{c})}\!\left(E_{2}(\epsilon)\right)  \left(\sigma_\gamma\cdot e\mathfrak{c}\right) .
\end{align}

\noindent By Proposition \ref{jan28-2015-1038am}, the above residue is the product of the constant term of $E_{2}(\epsilon)$ at the cusp $w_{N_r/t}( \mathfrak{c})$ and the width of this cusp. In Subsection \ref{feb1-2014-357pm} it was shown that the constant term of $E_{2}(\epsilon)$ at the cusp $w_{N_r/t}(\mathfrak{c})$ for $\mathfrak{c}\in C_r^0$ is zero unless $\mathfrak{c}$ is of the form $$\mathfrak{c}_{r,d_t,d_Q}^{x,y} ~:=~  \left(\begin{bmatrix}y \\ d_td_QP x\end{bmatrix}_{N} ,  \begin{bmatrix}0 \\ \omega(d_td_QP x) \end{bmatrix}_{p^r}\right)^\prime$$ for some tuple $(d_t,d_Q,x,y)\in\mathscr{S}_t$. Therefore, (\ref{sep29-2014-950am}) can be written as
\begin{align}\label{feb3-2015-1048am}
\frac{\psi_0(p)^{-r}}{tp^{r-1}}\sum_{\substack{\epsilon\in \widehat{U_1/U_r}\\\gamma \in U_1/U_r\\(d_t,d_Q,x,y)\in \mathscr{S}_t}}\epsilon^{-1}(\gamma)\,\mathrm{Res}_{w_{N_r/t}\left(\mathfrak{c}_{r,d_t,d_Q}^{x,y}\right)}\!\big(E_{2}(\epsilon)\big) \left(\sigma_\gamma\cdot e\mathfrak{c}_{r,d_t,d_Q}^{x,y}\right).
\end{align}

\noindent By Lemma \ref{feb3-2014-446pm}, the constant term of $E_2(\epsilon)$ at the cusp $w_{N_r/t}\big(\mathfrak{c}_{r,d_t,d_Q}^{x,y}\big)$ is $$C\cdot\epsilon\left(\frac{f_\theta }{f_{\xi}d_tx}\right)\psi_0\!\left(\frac{yQp^r}{d_Q}\right)\theta_0^{-1}\!(d_tx)\,  \delta_{\theta,\psi}(\epsilon(u) - 1) \left(\prod_{\substack{\ell\,\mid\, f_\theta f_\psi \\ \ell\,\nmid\, f_{\xi}}}(1-(\xi\epsilon^{-1})(\ell)\ell^{-2})\right)L_p(-1,\xi_2^{-1}\epsilon)$$

\noindent where $C$ is a $p$-adic unit depending only on $\theta_0$ and $\psi_0$. Furthermore, in subsection \ref{jul3-2013-1106am} we showed that the width of the cusp $w_{N_r/t}\big(\mathfrak{c}_{r,d_t,d_Q}^{x,y}\big)$ is $td_QP/\gcd(y,t)$. Therefore,  (\ref{feb3-2015-1048am}) can be written as
\begin{multline}\label{feb3-2015-1124am}
CP\!\sum_{(d_t,d_Q,x,y)\in\mathscr{S}_t}\frac{d_Q}{\gcd(y,t)}\,\psi_0\!\left(\frac{yQ}{d_Q}\right) \theta_0^{-1}(d_tx) \\
\cdot \left(\frac{1}{p^{r-1}}\sum_{\substack{\epsilon\in \widehat{U_1/U_r}\\\gamma \in U_1/U_r}}\epsilon^{-1}(\gamma)\,\epsilon\left(-\frac{f_\theta }{f_{\xi}d_tx}\right)\mathcal{L}_\epsilon \left(\sigma_\gamma\cdot e\mathfrak{c}_{r,d_t,d_Q}^{x,y}\right)\right).
\end{multline}

\noindent where $$\mathcal{L}_\epsilon ~:=~ \delta_{\theta,\psi}(\epsilon(u) - 1) \left(\prod_{\substack{\ell\,\mid\, f_\theta f_\psi \\ \ell\,\nmid\, f_{\xi}}}(1-(\xi\epsilon^{-1})(\ell)\ell^{-2})\right)L_p(-1,\xi_2^{-1}\epsilon).$$

For each tuple $(d_t,d_Q,x,y)\in\mathscr{S}_t$ define $$\mathfrak{e}_{\infty,d_t,d_Q}^{x,y} ~=~\varprojlim_r e\mathfrak{c}_{r,d_t,d_Q}^{x,y}~\in~ C_{\Lambda}.$$ 

\noindent If $\lambda\in \Lambda_\infty$, we have $$\lambda \cdot \mathfrak{e}_{\infty,d_t,d_y}^{x,y} ~=~ \varprojlim_{r\geq 1}\left(\frac{1}{p^{r-1}}\sum_{\substack{\epsilon\in \widehat{U_1/U_r}\\\gamma \in U_1/U_r}}\epsilon^{-1}(\gamma)\lambda(\epsilon(u)-1) \left(\sigma_\gamma\cdot e\mathfrak{c}_{r,d_t,d_Q}^{x,y}\right)\right).$$

\noindent (cf. e.g. proof of \cite[Lemma 2.4.2]{OhtaES}). Noting that $$
A_{\theta,\psi}(\epsilon(u)-1) ~=~ \mathcal{L}_\epsilon\hspace{.3in}\text{and}\hspace{.3in}
\epsilon(u)^{s(-f_\theta/f_\xi d_tx)} ~=~ \epsilon\left(-\frac{f_\theta}{f_\xi d_tx}\right)$$

\noindent for all $\epsilon\in \widehat{U}_{1}$, we have $$(1+X)^{s(-f_\theta/f_\xi d_tx)}  \cdot A_{\theta,\psi}\cdot \mathfrak{e}_{\infty,d_t,d_Q}^{x,y}$$$$=\varprojlim_r \left(\frac{1}{p^{r-1}}\sum_{\substack{\epsilon\in \widehat{U_1/U_r}\\\gamma \in U_1/U_r}}\epsilon^{-1}(\gamma)\,\epsilon\left(-\frac{f_\theta}{f_\xi d_tx}\right)\mathcal{L}_\epsilon \left(\sigma_\gamma\cdot e\mathfrak{c}_{r,d_t,d_Q}^{x,y}\right)\right). $$

\noindent Putting this together with (\ref{feb3-2015-1124am}) we see that $$\mathrm{Res}_\Lambda(\mathcal{E}_{\theta_0,\psi_0;t}) ~=~ A_{\theta,\psi}\cdot \mathfrak{e}_{\theta_0,\psi_0;t}$$ where $$\mathfrak{e}_{\theta_0,\psi_0;t} :=CP\sum_{(d_t,d_Q,x,y)\in\mathscr{S}_t}\hspace{-.2in}\frac{d_Q\cdot(1+X)^{s(-f_\theta/f_\xi d_t x)}}{\gcd(y,t)}\,\,\psi_0\!\left(\frac{yQ}{d_Q}\right)\, \theta_0^{-1}(d_tx)\cdot\mathfrak{e}_{\infty,d_t,d_Q}^{x,y}.$$

\noindent All that remains to be shown is that $\mathfrak{e}_{\theta_0,\psi_0;t}\not\in \mathfrak{m}C_\Lambda$. To see this, note that the cusps $\mathfrak{e}_{\infty,d_t,d_Q}^{x,y}$ are distinct for all tuples $(d_t,d_Q,x,y)\in\mathscr{S}_t$ and $$CP\cdot \frac{d_Q\cdot(1+X)^{s(-f_\theta/f_\xi d_t x)}}{\gcd(y,t)}\,\,\psi_0\!\left(\frac{yQ}{d_Q}\right)\, \theta_0^{-1}(d_tx)~\in~\Lambda^\times.$$

%----------------------
\subsubsection{Residue of Eisenstein series associated to imprimitive characters}\label{jul3-2013-1109am}

Using Propositions $\ref{feb3-2014-705pm}$ and \ref{dec18-2013-1155am}, we can determine the image of $\Lambda$-adic Eisenstein series associated to imprimitive characters under the $\Lambda$-adic residue map.

\begin{theorem}\label{jun22-2015-947am} Suppose $\mathcal{E}_{\theta,\psi;t}\in M_\Lambda$. Then $$\mathrm{Res}_\Lambda(\mathcal{E}_{\theta,\psi;t}) ~=~ A_{\theta,\psi}\cdot \mathfrak{e}_{\theta,\psi;t}$$ where $$ \mathfrak{e}_{\theta,\psi;t} ~:=~\sum_{\substack{\alpha \mid D_\theta \\ \beta \mid D_\psi}}\alpha\mu(\alpha)\mu(\beta)\theta_0(\alpha)\psi_0(\beta)(1+X)^{s(\alpha)}\cdot \mathfrak{e}_{\theta_0,\psi_0;\alpha\beta t}~\in~C_\Lambda.$$ Furthermore, $\mathfrak{e}_{\theta,\psi;t}\not\in \mathfrak{m} C_\Lambda$.

\end{theorem}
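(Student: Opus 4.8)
The plan is to deduce the formula from the corresponding primitive-character statements by linearity, and then to extract the non-vanishing assertion from the explicit description of the cusps produced in the proof of Proposition~\ref{feb3-2014-705pm}.

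For the formula: by Proposition~\ref{dec18-2013-1155am} we have $\mathcal{E}_{\theta,\psi;t}=\sum_{\alpha\mid D_\theta,\,\beta\mid D_\psi}\alpha\mu(\alpha)\mu(\beta)\theta_0(\alpha)\psi_0(\beta)(1+X)^{s(\alpha)}\mathcal{E}_{\theta_0,\psi_0;\alpha\beta t}$ in $\Lambda\llbracket q\rrbracket$, and I would first observe that every summand $\mathcal{E}_{\theta_0,\psi_0;\alpha\beta t}$ actually lies in $M_\Lambda$: since $\alpha\mid D_\theta$ is squarefree and coprime to $f_\theta p$ while $\beta\mid D_\psi$ is squarefree and coprime to $f_\psi$, we get $f_\theta\alpha\mid M_\theta$ and $f_\psi\beta\mid M_\psi$, hence $f_{\theta_0}f_{\psi_0}(\alpha\beta t)=(f_\theta\alpha)(f_\psi\beta)t\mid M_\theta M_\psi t\mid Np$, and the remaining hypotheses of Theorem~\ref{dec31-2013-205pm} ($p\nmid\alpha\beta t$, $(f_{\psi_0},p)=1$, $(\theta_0\psi_0)(-1)=1$) are inherited from $\mathcal{E}_{\theta,\psi;t}\in M_\Lambda$. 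Thus the displayed identity holds in $M_\Lambda$. Because $\theta_0$ (resp.\ $\psi_0$) is the primitive character attached to $\theta$ (resp.\ $\psi$), the data $f_\theta$, $f_\psi$, $\xi=(\theta^{-1}\psi)_0$ and $\delta_{\theta,\psi}$ are unaffected by passing to $(\theta_0,\psi_0)$, so $A_{\theta_0,\psi_0}=A_{\theta,\psi}$. Applying the $\Lambda$-linear map $\mathrm{Res}_\Lambda$ and invoking Proposition~\ref{feb3-2014-705pm} term by term, the common factor $A_{\theta,\psi}$ comes out in front and one obtains $\mathrm{Res}_\Lambda(\mathcal{E}_{\theta,\psi;t})=A_{\theta,\psi}\cdot\mathfrak{e}_{\theta,\psi;t}$ with $\mathfrak{e}_{\theta,\psi;t}$ as stated.

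For $\mathfrak{e}_{\theta,\psi;t}\notin\mathfrak{m}C_\Lambda$: substitute into the formula for $\mathfrak{e}_{\theta,\psi;t}$ the explicit expansion of each $\mathfrak{e}_{\theta_0,\psi_0;\alpha\beta t}$ in terms of the ordinary cusps $\mathfrak{e}_{\infty,d_t,d_Q}^{x,y}$ obtained in the proof of Proposition~\ref{feb3-2014-705pm}, and reduce modulo $\mathfrak{m}=(\pi,X)$. All factors $(1+X)^{s(\cdot)}$ become $1$, and all scalar coefficients — both the outer $\alpha\mu(\alpha)\mu(\beta)\theta_0(\alpha)\psi_0(\beta)$ (a $p$-adic unit, as $\alpha,\beta$ are prime to $p$ and to $f_\theta,f_\psi$) and the inner ones from Proposition~\ref{feb3-2014-705pm} — become units. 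Since distinct ordinary cusps are $\Lambda$-linearly independent in $C_\Lambda$, it is enough to exhibit one ordinary cusp whose aggregate coefficient in $\overline{\mathfrak{e}_{\theta,\psi;t}}$ is nonzero. I would take such a cusp from the $(\alpha,\beta)=(1,1)$ term, chosen using the characterization in \S\ref{feb1-2014-357pm} (the set $\mathscr{S}_t$ and the constant-term computation of Lemma~\ref{feb3-2014-446pm}) so that its $c$-coordinate singles out the level $t$ among all levels $\alpha\beta t$ — e.g.\ a cusp with $d_t,d_Q$ maximal and $x$ coprime to $D_\theta D_\psi$ — and then verify that no tuple attached to a level $\alpha\beta t$ with $\alpha\beta>1$ produces this cusp, using that enlarging the level changes the factors $P_{(s)},Q_{(s)}$ and the admissible range of $d_t$. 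Its coefficient is then the single unit contribution from $(\alpha,\beta)=(1,1)$, and the conclusion follows.

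I expect this last verification to be the main obstacle: one must make the comparison of the cusp-supports of the various $\mathfrak{e}_{\theta_0,\psi_0;\alpha\beta t}$ precise enough that the chosen cusp receives a contribution from exactly one pair $(\alpha,\beta)$, since in general these supports do overlap. Should the direct bookkeeping become unwieldy, an alternative is to specialize via Proposition~\ref{mar3-2015-324pm} to a weight $k\ge 2$ and a character $\epsilon\in\widehat U_1$ and establish the non-vanishing at the level of classical modular forms, where it reduces to the statement that the constant term of the classical $p$-stabilized imprimitive Eisenstein series at a suitable cusp is a $p$-adic unit times the corresponding Dirichlet $L$-value; this, however, repackages rather than removes the combinatorics.
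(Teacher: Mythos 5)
Your derivation of the formula $\mathrm{Res}_\Lambda(\mathcal{E}_{\theta,\psi;t})=A_{\theta,\psi}\,\mathfrak{e}_{\theta,\psi;t}$ is correct and is exactly the paper's route: apply Proposition~\ref{dec18-2013-1155am} to expand $\mathcal{E}_{\theta,\psi;t}$, note $A_{\theta_0,\psi_0}=A_{\theta,\psi}$, and invoke Proposition~\ref{feb3-2014-705pm} term by term. Your check that each $\mathcal{E}_{\theta_0,\psi_0;\alpha\beta t}$ lies in $M_\Lambda$ is a useful detail the paper leaves implicit.

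The gap is in the non-vanishing argument, and it is in exactly the place you flag as ``the main obstacle.'' Your plan is to isolate a cusp attached to the $(\alpha,\beta)=(1,1)$ summand, taking $d_t,d_Q$ maximal in $\mathscr{S}_t$. This is the \emph{wrong end} from which to isolate. For a larger pair $(\alpha,\beta)$ the decomposition $N=\tilde f_\theta P'Q'(\alpha\beta t)$ shrinks $Q'$ by a factor of (the prime-to-$f_\psi$ part of) $\alpha\beta$ while simultaneously \emph{enlarging} the admissible range of $d_t'$, since one only needs $d_t'\mid\alpha\beta t$; in the generic case $\gcd(\alpha\beta,f_\theta p)=1$ the maximal admissible $c$-coordinate $d_t'\,d_Q'\,P'\,x'$ for the larger level is unchanged, so your distinguished cusp reappears in those summands and its coefficient is not a single unit. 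The paper's choice goes in the opposite direction: it takes the cusp $\mathfrak{e}_{\infty,D_0,1}^{1,1}$, with $d_t=D_0$ the \emph{largest factor of $D_\theta D_\psi t$ prime to $f_\theta p$} and $d_Q=1$. Since for every $(\alpha,\beta)$ one has $d_t\mid \alpha\beta t$ and $\gcd(d_t,f_\theta p)=1$, the value $D_0$ is the absolute ceiling of $d_t$ over \emph{all} parametrizations, attained only when $\alpha\beta$ is as large as it can be; this is what makes the contribution to this particular cusp come from a single summand, namely $(\alpha,\beta)=(D_\theta,D_\psi)$ with $(d_t,d_Q,x,y)=(D_0,1,1,1)$, and the explicit coefficient $CP\,D_\theta\,\mu(D_\theta)\mu(D_\psi)\,\psi_0(Q)\,\theta_0^{-1}(D_0)\,(1+X)^{s(-D_\theta f_\theta/f_\xi D_0)}$ is then visibly a unit. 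So the framework of your proposal (distinct ordinary cusps, one unit coefficient suffices) is the paper's, but the distinguished cusp must be chosen from the maximal summand, not the minimal one; with the $(1,1)$ choice the required uniqueness fails. Your fallback suggestion of specializing via Proposition~\ref{mar3-2015-324pm} is, as you anticipate, not a shortcut: it reproduces the same bookkeeping on constant terms of classical $p$-stabilized Eisenstein series.
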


\begin{proof} That the image of $\mathcal{E}_{\theta,\psi;t}$ under the $\Lambda$-adic residue map is given by the above sum is an immediate consequence of Propositions $\ref{feb3-2014-705pm}$ and \ref{dec18-2013-1155am}. To show $\mathfrak{e}_{\theta,\psi;t}\not\in \mathfrak{m} C_\Lambda$ we first note that,$$\mathfrak{e}_{\theta,\psi;t} ~:=~ CP\hspace{-.3in}\sum_{\substack{\alpha \mid D_\theta \\ \beta \mid D_\psi \\ (d_t,d_Q,x,y)\in\mathscr{S}_{\alpha\beta t}}}\!\!\!\!\!\frac{\alpha d_Q\mu(\alpha)\mu(\beta)(1+X)^{s(-\alpha f_\theta/f_\xi d_t x)}}{\gcd(y,\alpha\beta t)}\,\psi_0\!\left(\frac{yQ}{d_Q}\right) \theta_0^{-1}(d_tx)\cdot\mathfrak{e}_{\infty,d_t,d_Q}^{x,y}$$ 

\noindent (despite the notation, the integer $Q$ is dependent on $\alpha$ and $\beta$).  Recall that the cusps $\mathfrak{e}_{\infty,d_t,d_Q}^{x,y}$ are distinct for all tuples $(d_t,d_Q,x,y)\in \mathscr{S}_t$. Let $D_0$ denote the largest factor of $D_\theta D_\psi t$ that is prime to $f_\theta p$. Then the coefficient of $\mathfrak{e}_{\infty,D_0,1}^{1,1}$ in the above sum is given by $$ CP D_\theta \mu(D_\theta)\mu(D_\psi)\cdot \psi_0(Q)\theta_0^{-1}(D_0) \cdot (1+X)^{s(-D_\theta f_\theta/f_\xi D_0)} ~\in~ \Lambda^\times.$$\end{proof}

%~~~~~~~~~~~~~~~~~~~~~~~~~~~~~~~~~~~~~~~~~~~~~
%~~~~~~~~~~~~~~~~~~~~~~~~~~~~~~~~~~~~~~~~~~~~~
%~~~~~~~~~~~~~~~~~~~~~~~~~~~~~~~~~~~~~~~~~~~~~
%~~~~~~~~~~~~~~~~~~~~~~~~~~~~~~~~~~~~~~~~~~~~~
\section{Hida duality}\label{mar4-2015-911pm}\label{secDUAL}

The main result of this section will be a refinement of Hida's duality theorem. In order to state this refinement we must first use the results of the previous section to construct elements of $M_\Lambda$ arising from congruences between $\Lambda$-adic Eisenstein series and ordinary $\Lambda$-adic cusp forms.

%~~~~~~~~~
\begin{theorem}[\cite{HidaBB}, \S7.3 Theorem 5]\label{feb6-2015-1235pm} We have a perfect pairing of $\Lambda$-modules $$S_\Lambda\times \mathfrak{h}_\Lambda \rightarrow \Lambda : (F,H)\mapsto a_1(F|H)$$
\end{theorem}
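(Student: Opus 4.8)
The plan is to prove the theorem by showing that the $\Lambda$-bilinear map $\langle F,H\rangle := a_1(F|H)$ induces an isomorphism $\alpha\colon S_\Lambda\xrightarrow{\ \sim\ }\mathrm{Hom}_\Lambda(\mathfrak{h}_\Lambda,\Lambda)$; since $S_\Lambda$ and $\mathfrak{h}_\Lambda$ are free of finite rank over $\Lambda$ (Propositions \ref{dec10-209pm} and \ref{mar19-2015-143pm}), the second adjoint map $\beta\colon\mathfrak{h}_\Lambda\to\mathrm{Hom}_\Lambda(S_\Lambda,\Lambda)$ is then automatically an isomorphism as well (it is the composite of $\alpha^{\vee}$ with the biduality isomorphism $\mathfrak{h}_\Lambda\xrightarrow{\sim}\mathrm{Hom}_\Lambda(\mathrm{Hom}_\Lambda(\mathfrak{h}_\Lambda,\Lambda),\Lambda)$), so the pairing is perfect. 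The pairing is well defined into $\Lambda$ because $\mathfrak{h}_\Lambda$ stabilizes $S_\Lambda\subset\Lambda\llbracket q\rrbracket$, whence $F|H\in S_\Lambda$ and $a_1(F|H)\in\Lambda$. Injectivity of $\alpha$ is immediate: $a_1(F|T_n)=a_n(F)$ for $n\ge1$ and $a_0(F)=0$ for a cusp form, so $\langle F,\,\cdot\,\rangle=0$ forces every $q$-expansion coefficient of $F$ to vanish, hence $F=0$. For injectivity of $\beta$, if $H\in\mathfrak{h}_\Lambda$ kills the pairing then, using that Hecke operators commute, $a_m(F|H)=a_1\big((F|H)|T_m\big)=a_1\big((F|T_m)|H\big)=0$ for all $m\ge1$ and all $F$; together with $a_0(F|H)=0$ this gives $F|H=0$ for every $F$, and since $\mathfrak{h}_\Lambda$ acts faithfully on $S_\Lambda$ we conclude $H=0$. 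In particular $\mathrm{rank}_\Lambda S_\Lambda=\mathrm{rank}_\Lambda\mathfrak{h}_\Lambda=:d$.

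It remains to prove $\alpha$ is surjective. As $\mathrm{coker}(\alpha)$ is finitely generated over the local ring $\Lambda$, Nakayama's lemma reduces this to surjectivity of $\bar\alpha:=\alpha\otimes_\Lambda\Lambda/\mathfrak{m}$, where $\mathfrak{m}=(\pi,X)$ and $\mathbb{F}:=\Lambda/\mathfrak{m}=\mathcal{O}/\pi$. Because $S_\Lambda$ and $\mathfrak{h}_\Lambda$ are $\Lambda$-free, $\bar\alpha$ is a map $S_\Lambda/\mathfrak{m}S_\Lambda\to\mathrm{Hom}_{\mathbb{F}}(\mathfrak{h}_\Lambda/\mathfrak{m}\mathfrak{h}_\Lambda,\mathbb{F})$ of $\mathbb{F}$-vector spaces each of dimension $d$, so it is enough to show $\bar\alpha$ is injective. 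Applying Proposition \ref{mar3-2015-324pm} with $(k,\epsilon)=(2,\mathbbm{1})$ (so $P_{k,\epsilon}=X$) and then reducing modulo $\pi$ gives a $q$-expansion compatible identification $S_\Lambda/\mathfrak{m}S_\Lambda\cong eS_{2,1,\mathcal{O}}\otimes_{\mathcal{O}}\mathbb{F}$, under which $\bar\alpha(\bar F)$ sends the image of $T_n$ to $\overline{a_n(F)}\in\mathbb{F}$. Thus an element $\bar F$ of $\ker\bar\alpha$ has vanishing $q$-expansion in $eS_{2,1,\mathcal{O}}\otimes_{\mathcal{O}}\mathbb{F}$, and the \emph{mod $p$ $q$-expansion principle}, i.e. that $eS_{2,1,\mathcal{O}}\otimes_{\mathcal{O}}\mathbb{F}\hookrightarrow\mathbb{F}\llbracket q\rrbracket$, forces $\bar F=0$.

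The one nonformal ingredient is this mod $p$ $q$-expansion principle for ordinary cusp forms of weight $2$ and level $\Gamma_1(Np)$; I expect verifying it to be the main obstacle, though it follows from the $q$-expansion principle for the associated modular curve over $\mathbb{Z}_p$ (the hypothesis $p\ge5$ and the restriction to the ordinary locus, where the Tate curve lies, making the argument go through despite bad reduction at $p$). Alternatively, one can bypass it by invoking the classical integral perfect duality $eS_{k,r,\mathcal{O},\epsilon}\times e\mathfrak{h}_{k,r,\mathcal{O},\epsilon}\to\mathcal{O}$ at each finite level together with the control isomorphism $\mathfrak{h}_\Lambda/P_{k,\epsilon}\mathfrak{h}_\Lambda\cong e\mathfrak{h}_{k,r,\mathcal{O},\epsilon}$: given $\phi\in\mathrm{Hom}_\Lambda(\mathfrak{h}_\Lambda,\Lambda)$, set $F_\phi:=\sum_{n\ge1}\phi(T_n)q^n$, check from these two facts that $v_{k,\epsilon}(F_\phi)$ is a classical cusp form for every $(k,\epsilon)$ — hence $F_\phi\in S_\Lambda$ — and then deduce $\alpha(F_\phi)=\phi$ from the fact that $\alpha(F_\phi)$ and $\phi$ agree on every $T_n$ and modulo every arithmetic prime $P_{k,\epsilon}$, whose intersection in $\Lambda$ is zero. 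Either way, all remaining steps are formal.
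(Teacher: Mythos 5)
The paper itself gives no proof here: the theorem is quoted from Hida's book \cite{HidaBB}, \S 7.3, Theorem 5, whose argument is precisely your ``alternative'' route — establish the perfect $\mathcal{O}$-duality between $eS_{k,r,\mathcal{O},\epsilon}$ and $e\mathfrak{h}_{k,r,\mathcal{O},\epsilon}$ at each finite level and pass to the $\Lambda$-adic statement by control. So the comparison is between your reconstruction and Hida's, not the paper's.

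Your primary route (injectivity of both adjoint maps, equality of ranks, Nakayama, a mod~$p$ $q$-expansion principle) is sound, and the ingredient you flag as the ``main obstacle'' is in fact automatic given this paper's definitions. Since $S_{k,r,\mathbb{Z}}:=S_{k,r}\cap\mathbb{Z}\llbracket q\rrbracket$, the quotient $\mathbb{Z}\llbracket q\rrbracket/S_{k,r,\mathbb{Z}}$ is torsion-free — if $f\in\mathbb{Z}\llbracket q\rrbracket$ and $nf\in S_{k,r,\mathbb{Z}}$ then $f\in S_{k,r}\cap\mathbb{Z}\llbracket q\rrbracket=S_{k,r,\mathbb{Z}}$ — so $S_{k,r,\mathbb{Z}}/p\hookrightarrow\mathbb{F}_p\llbracket q\rrbracket$. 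Tensoring with $\mathcal{O}$ and restricting to the direct summand $eS_{2,1,\mathcal{O}}$ (the idempotent $e$ commutes with reduction mod~$\pi$ on a direct summand of a free module) yields $eS_{2,1,\mathcal{O}}\otimes_{\mathcal{O}}\mathbb{F}\hookrightarrow\mathbb{F}\llbracket q\rrbracket$ with no appeal to the geometry of the mod~$p$ fiber of $X_1(Np)$; the saturation of the $\mathbb{Z}$-lattice does all the work. Two smaller points ought to be made explicit rather than asserted: (i) the faithfulness of the $\mathfrak{h}_\Lambda$-action on $S_\Lambda$, which you invoke for injectivity of $\beta$, follows from Proposition~\ref{mar3-2015-324pm} (if $H$ annihilates $S_\Lambda$ then it annihilates every $S_\Lambda/P_{k,\epsilon}S_\Lambda\cong eS_{k,r,\mathcal{O},\epsilon}$, so $H\in\bigcap_{k,\epsilon}P_{k,\epsilon}\mathfrak{h}_\Lambda=0$ by $\Lambda$-freeness); and (ii) the identification $S_\Lambda/\mathfrak{m}S_\Lambda\cong eS_{2,1,\mathcal{O}}\otimes\mathbb{F}$ is compatible with $q$-expansions because the control isomorphism is realized by the specialization map $v_{2,\mathbbm{1}}$ — which is needed to know that $\bar F\in\ker\bar\alpha$ really has vanishing image in $\mathbb{F}\llbracket q\rrbracket$. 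With these two remarks supplied, either of your routes is a complete proof.
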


%~~~~~~~~~~~~~~~~~~~~
\begin{prop}\label{mar3-2015-406pm} Suppose $\mathcal{E}_{\theta,\psi;t}\in M_\Lambda$. Then ${\Lambda_{\infty}}(\mathfrak{e}_{\theta,\psi;t})$ is a free ${\Lambda_{\infty}}$-module. \end{prop}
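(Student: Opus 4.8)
The plan is to show that the cyclic $\Lambda_\infty$-module $\Lambda_\infty(\mathfrak{e}_{\theta,\psi;t}) = \Lambda_\infty\cdot\mathfrak{e}_{\theta,\psi;t}\subset C_{\Lambda_\infty}$ is free by proving that $\mathfrak{e}_{\theta,\psi;t}$ generates a torsion-free submodule; since $\Lambda_\infty$ is a domain, a cyclic torsion-free module is automatically free of rank one. Equivalently, I must show that $\mathrm{Ann}_{\Lambda_\infty}(\mathfrak{e}_{\theta,\psi;t}) = 0$, i.e.\ that $\mathfrak{e}_{\theta,\psi;t}$ is not killed by any nonzero element of $\Lambda_\infty$. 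The natural route is to exhibit $\mathfrak{e}_{\theta,\psi;t}$ as part of a $\Lambda_\infty$-basis of a free submodule, or at least to produce a $\Lambda_\infty$-linear functional on $C_{\Lambda_\infty}$ (or on a relevant quotient) that sends $\mathfrak{e}_{\theta,\psi;t}$ to a unit — the latter would immediately give both freeness and the splitting.

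First I would recall from the proof of Theorem \ref{jun22-2015-947am} that $\mathfrak{e}_{\theta,\psi;t}$ is an explicit $\Lambda$-linear combination of the basis cusps $\mathfrak{e}_{\infty,d_t,d_Q}^{x,y}$, which are distinct elements of $C_{\Lambda_\infty}$, and that the coefficient of the particular cusp $\mathfrak{e}_{\infty,D_0,1}^{1,1}$ is a unit in $\Lambda^\times\subseteq\Lambda_\infty^\times$ (this is precisely the computation establishing $\mathfrak{e}_{\theta,\psi;t}\notin\mathfrak{m}C_\Lambda$). Now $C_{\Lambda_\infty}$ is a projective limit of the free $\mathcal{O}_\infty$-modules $\mathcal{O}_\infty[C_r^{\mathrm{ord}}] = \mathcal{O}_\infty[U_1/U_r][C_r^0]$, and one checks that $C_{\Lambda_\infty}$ is itself a free $\Lambda_\infty$-module with basis indexed by $C_r^0$-orbits (the $U_1/U_r$-action being absorbed into $\Lambda_\infty = \mathcal{O}_\infty\llbracket U_1\rrbracket$), so the $\mathfrak{e}_{\infty,d_t,d_Q}^{x,y}$ — being the images of the $\sigma_\gamma$-orbits of the cusps $\mathfrak{c}_{r,d_t,d_Q}^{x,y}$ — are part of a $\Lambda_\infty$-basis of $C_{\Lambda_\infty}$. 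Projecting onto the coordinate corresponding to $\mathfrak{e}_{\infty,D_0,1}^{1,1}$ then gives a $\Lambda_\infty$-linear map $C_{\Lambda_\infty}\to\Lambda_\infty$ carrying $\mathfrak{e}_{\theta,\psi;t}$ to a unit of $\Lambda_\infty$; composing with the inclusion $\Lambda_\infty(\mathfrak{e}_{\theta,\psi;t})\hookrightarrow C_{\Lambda_\infty}$ shows this submodule is a direct summand isomorphic to $\Lambda_\infty$, hence free.

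The only real subtlety — and the step I expect to require the most care — is justifying that the elements $\mathfrak{e}_{\infty,d_t,d_Q}^{x,y}$ genuinely extend to a $\Lambda_\infty$-basis of $C_{\Lambda_\infty}$ rather than merely being distinct: one must verify that $C_{\Lambda_\infty} = \varprojlim_r \mathcal{O}_\infty[U_1/U_r][C_r^0]$ is free over $\Lambda_\infty = \varprojlim_r \mathcal{O}_\infty[U_1/U_r]$ with basis the (compatible system of) $C_r^0$-indexed elements, and that the transition maps on the $C_r^0$-part are the identity, so that no cancellation can occur in the limit. Once that structural fact is in hand — it follows formally from the description of $C_r^{\mathrm{ord}}$ in Subsection \ref{feb18-2014-628pm}, since $C_r^0$ is a fixed finite set independent of $r$ and the surjections $\mathcal{O}_\infty[C_s^{\mathrm{ord}}]\twoheadrightarrow\mathcal{O}_\infty[C_r^{\mathrm{ord}}]$ are $\mathcal{O}_\infty[U_1/U_r]$-linear and split the $C_r^0$-grading — the coordinate-projection argument finishes the proof. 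Alternatively, if one prefers to avoid analyzing the full module structure of $C_{\Lambda_\infty}$, one can argue directly: suppose $\lambda\cdot\mathfrak{e}_{\theta,\psi;t} = 0$ for some $\lambda\in\Lambda_\infty$; reading off the $\mathfrak{e}_{\infty,D_0,1}^{1,1}$-component and using that its coefficient is a unit forces $\lambda = 0$, which again yields $\mathrm{Ann}_{\Lambda_\infty}(\mathfrak{e}_{\theta,\psi;t}) = 0$ and hence freeness of the cyclic module.
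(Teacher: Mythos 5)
Your proof is correct, but it takes a genuinely different and more laborious route than the paper's. The paper reasons entirely upstairs in $M_{\Lambda_\infty}$: if $\lambda\cdot\mathfrak{e}_{\theta,\psi;t}=0$ for some $\lambda\in\Lambda_\infty$, then $\mathrm{Res}_\Lambda(\lambda\mathcal{E}_{\theta,\psi;t}) = A_{\theta,\psi}\cdot(\lambda\mathfrak{e}_{\theta,\psi;t}) = 0$, so $\lambda\mathcal{E}_{\theta,\psi;t}\in S_{\Lambda_\infty}$; but that forces $\lambda=0$, since by Theorem \ref{dec31-2013-205pm} the Eisenstein series span a $Q(\Lambda_\infty)$-complement to $S_{\Lambda_\infty}\otimes_{\Lambda_\infty}Q(\Lambda_\infty)$ inside $M_{\Lambda_\infty}\otimes_{\Lambda_\infty}Q(\Lambda_\infty)$. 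That is the entire proof in the paper.

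Your argument instead works downstairs in $C_{\Lambda_\infty}$, using the explicit expansion of $\mathfrak{e}_{\theta,\psi;t}$ in the cusp classes $\mathfrak{e}_{\infty,d_t,d_Q}^{x,y}$ and the unit coefficient of $\mathfrak{e}_{\infty,D_0,1}^{1,1}$ established in the proof of Theorem \ref{jun22-2015-947am}. This is sound, and you correctly flag the one point that needs justification, namely that $C_{\Lambda_\infty}$ is $\Lambda_\infty$-free with a basis indexed by $C_r^0$; this does follow from the description in Subsection \ref{feb18-2014-628pm}, because $C_r^0$ is independent of $r$ and the transition maps $\mathcal{O}_\infty[U_1/U_s][C_s^0]\twoheadrightarrow\mathcal{O}_\infty[U_1/U_r][C_r^0]$ act by reduction on the group-ring factor and by the identity on the $C_r^0$ factor. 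Your version even produces a $\Lambda_\infty$-linear projection exhibiting $\Lambda_\infty(\mathfrak{e}_{\theta,\psi;t})$ as a direct summand of $C_{\Lambda_\infty}$, which is slightly stronger than mere freeness. The trade-off is that you must re-derive structural information about the cuspidal group that the paper's one-line argument sidesteps by working with $\mathcal{E}_{\theta,\psi;t}$ rather than its residue; when all you need is to rule out a nonzero annihilator, lifting the question to $M_{\Lambda_\infty}$ and invoking the Eisenstein/cusp decomposition is the more economical move.
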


\begin{proof} If $\lambda \cdot \mathfrak{e}_{\theta,\psi;t}=0$ for some $\lambda\in \Lambda_\infty$, we would have $\lambda\cdot \mathcal{E}_{\theta,\psi;t}\in S_{\Lambda_\infty}$, a contradiction.\end{proof}

%~~~~~~~~~~~~~~~~~~~
\begin{prop}[\cite{OhtaEC2}, Lemma 2.1.1]\label{mar3-2015-407pm} The $\mathbb{Z}_p\llbracket X\rrbracket$-algebra ${\Lambda_{\infty}}$ is faithfully flat. \end{prop}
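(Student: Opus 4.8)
Proof plan for Proposition \ref{mar3-2015-407pm} ($\Lambda_\infty = \mathcal{O}_\infty\llbracket X\rrbracket$ is faithfully flat over $\mathbb{Z}_p\llbracket X\rrbracket$).

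The plan is to reduce the statement to the corresponding fact about the coefficient rings $\mathcal{O}_\infty$ and $\mathbb{Z}_p$. First I would observe that a power series ring is flat over its analogue in the base in a transparent way: there is a canonical isomorphism $\Lambda_\infty \cong \mathcal{O}_\infty \,\widehat{\otimes}_{\mathbb{Z}_p}\, \mathbb{Z}_p\llbracket X\rrbracket$ (completed tensor product with respect to the $(p,X)$-adic topology), and more to the point one has a filtered-colimit description: $\mathcal{O}_\infty$ is the ring of integers of a subfield of $\mathbb{C}_p$ containing all roots of unity, hence $\mathcal{O}_\infty = \varinjlim_n \mathcal{O}_n$ where $\mathcal{O}_n$ runs over the rings of integers of finite subextensions. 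Each $\mathcal{O}_n$ is a finite free $\mathbb{Z}_p$-module (it is a complete DVR finite over $\mathbb{Z}_p$), so $\mathcal{O}_n\llbracket X\rrbracket$ is a finite free $\mathbb{Z}_p\llbracket X\rrbracket$-module, in particular free and hence faithfully flat. Taking the colimit over $n$, $\Lambda_\infty = \varinjlim_n \mathcal{O}_n\llbracket X\rrbracket$ is a filtered colimit of faithfully flat $\mathbb{Z}_p\llbracket X\rrbracket$-algebras.

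Next I would invoke the standard permanence properties: a filtered colimit of flat modules is flat, so $\Lambda_\infty$ is flat over $\mathbb{Z}_p\llbracket X\rrbracket$. For \emph{faithful} flatness it remains to check that $\mathfrak{n}\Lambda_\infty \neq \Lambda_\infty$ for every maximal ideal $\mathfrak{n}$ of $\mathbb{Z}_p\llbracket X\rrbracket$; since $\mathbb{Z}_p\llbracket X\rrbracket$ is local with maximal ideal $(p,X)$, it suffices to show $(p,X)\Lambda_\infty \neq \Lambda_\infty$, i.e. $\Lambda_\infty/(p,X)\Lambda_\infty \neq 0$. But $\Lambda_\infty/(p,X)\Lambda_\infty \cong \mathcal{O}_\infty/p\mathcal{O}_\infty$, which is nonzero because $\mathcal{O}_\infty$ is a domain in which $p$ is not a unit (it sits inside the valuation ring of $\mathbb{C}_p$ and $p$ is not invertible there). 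Equivalently, one can note that each $\mathcal{O}_n\llbracket X\rrbracket$ is faithfully flat over $\mathbb{Z}_p\llbracket X\rrbracket$, and a filtered colimit of faithfully flat algebras that is itself flat, with the nonvanishing of the fiber over the closed point, is faithfully flat.

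I do not expect any serious obstacle here — the result is essentially formal once the colimit description of $\mathcal{O}_\infty$ is in hand. The one point that deserves a line of care is justifying the colimit description itself, i.e. that ``the ring of integers of a complete subfield of $\mathbb{C}_p$ containing all roots of unity'' is the union (or its completion is the completion of the union) of the rings of integers of finite subfields; here one should be slightly careful about completion, since $\mathcal{O}_\infty$ as stated is complete, whereas $\varinjlim_n \mathcal{O}_n$ need not be. This is handled by passing to $(p,X)$-adic completions on both sides and using that completion is flat (indeed, for Noetherian or more generally for these explicit power series rings, $\mathbb{Z}_p\llbracket X\rrbracket$-flatness is preserved under $(p,X)$-adic completion of a flat module of bounded ``width''), or alternatively by citing the reference \cite{OhtaEC2} directly, as the authors do. For the purposes of this paper it is cleanest simply to record the reduction to $\mathcal{O}_\infty$ being faithfully flat over $\mathbb{Z}_p$ — which is immediate, $\mathcal{O}_\infty$ being a torsion-free, hence flat, $\mathbb{Z}_p$-module with $\mathcal{O}_\infty/p\mathcal{O}_\infty\neq 0$ — and then observe that base-changing along $\mathbb{Z}_p \to \mathbb{Z}_p\llbracket X\rrbracket$ and completing preserves faithful flatness.
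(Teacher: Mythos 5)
Your plan has a genuine gap. The identity $\Lambda_\infty = \varinjlim_n \mathcal{O}_n\llbracket X\rrbracket$ is false for two independent reasons: first, $\mathcal{O}_\infty$ is by definition the ring of integers of a \emph{complete} subfield of $\mathbb{C}_p$, so it is the $p$-adic completion of $\varinjlim_n\mathcal{O}_n$ rather than the colimit itself (you acknowledge this); second --- and this you do not address --- forming power series rings does not commute with filtered colimits, since if $a_n\in\mathcal{O}_n\setminus\mathcal{O}_{n-1}$ then $\sum_n a_n X^n$ lies in no $\mathcal{O}_n\llbracket X\rrbracket$. Thus $\Lambda_\infty$ is strictly larger than $\varinjlim_n\mathcal{O}_n\llbracket X\rrbracket$, and the permanence statement ``a filtered colimit of flat modules is flat'' does not apply. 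Your proposed repair --- pass to $(p,X)$-adic completions and invoke that completion preserves flatness --- is exactly where the mathematical content would have to sit, and it is left unjustified: $I$-adic completion of an infinitely generated flat module over a Noetherian ring does \emph{not} in general yield a flat module, and the parenthetical appeal to ``bounded width'' is not a definition, let alone an argument. The final suggested reduction (base-change $\mathcal{O}_\infty$ along $\mathbb{Z}_p\to\mathbb{Z}_p\llbracket X\rrbracket$, then complete) runs into the identical difficulty, since $\mathcal{O}_\infty\llbracket X\rrbracket$ is again the completion of $\mathcal{O}_\infty\otimes_{\mathbb{Z}_p}\mathbb{Z}_p\llbracket X\rrbracket$ rather than the base change itself.

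For the record, the paper merely cites Ohta for this proposition, so there is no in-text proof to compare with; but a correct argument avoiding colimits entirely is available. Since $\mathbb{Z}_p\llbracket X\rrbracket$ is a regular local ring of Krull dimension $2$, by d\'evissage on finitely generated modules it suffices to check $\mathrm{Tor}^{\mathbb{Z}_p\llbracket X\rrbracket}_i(\mathbb{Z}_p\llbracket X\rrbracket/\mathfrak{p},\Lambda_\infty)=0$ for every prime $\mathfrak{p}$ and $i\geq 1$. For $\mathfrak{p}$ of height at most $1$ this holds because $\Lambda_\infty$ is an integral domain containing $\mathbb{Z}_p\llbracket X\rrbracket$, hence torsion-free. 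For the maximal ideal $(p,X)$, the Koszul complex on $(p,X)$ resolves the residue field, and the required Koszul homology vanishes because $(p,X)$ is a regular sequence on $\Lambda_\infty$: $p$ is a nonzerodivisor on the domain $\Lambda_\infty$, and $X$ is a nonzerodivisor on $\Lambda_\infty/p\Lambda_\infty\cong(\mathcal{O}_\infty/p\mathcal{O}_\infty)\llbracket X\rrbracket$. Faithfulness then follows as you say, from $\Lambda_\infty/(p,X)\Lambda_\infty\cong\mathcal{O}_\infty/p\mathcal{O}_\infty\neq 0$ and the fact that $\mathbb{Z}_p\llbracket X\rrbracket$ is local with maximal ideal $(p,X)$.
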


Using Propositions \ref{mar3-2015-406pm} and \ref{mar3-2015-407pm} along with Corollary \ref{mar3-2015-332pm}, we see that there exists an $F\in M_\Lambda$ mapping to $\mathfrak{e}_{\theta,\psi;t}$ under the $\Lambda$-adic residue map. We will now construct a canonical element of $M_\Lambda$ with this property. By Proposition \ref{dec10-209pm}, we know that $S_\Lambda$ is a free and finitely generated $\Lambda$-module. Let $\{F_1,\dots,F_s\}$ be a $\Lambda$-basis for $S_\Lambda$. By Theorem \ref{feb6-2015-1235pm}, we know that $\mathfrak{h}_\Lambda$ has a $\Lambda$-basis $\{B_1,\dots,B_s\}$ satisfying $$a_1(F_i|B_j)~=~\left\{\begin{array}{rcl}1 & & i=j\\ 0 & & i\neq j.\end{array}  \right.$$ For each $i$, let $\mathfrak{B}_i$ be \emph{any} element of $\mathfrak{H}_\Lambda$ that projects to $B_i$ under the surjection $\mathfrak{H}_\Lambda\twoheadrightarrow \mathfrak{h}_\Lambda$.

%~~~~~~~~~~~~~~~~~~~
\begin{definition} Let $F\in M_\Lambda$ be any element satisfying $\mathrm{Res}_\Lambda(F)=\mathfrak{e}_{\theta,\psi;t}$, and define $$\mathcal{F}_{\theta,\psi;t} ~=~ F - \sum_{i=1}^s a_1(F|\mathfrak{B}_i)F_i.$$ 

\end{definition}

\noindent Because any two elements of $M_\Lambda$ mapping to $\mathfrak{e}_{\theta,\psi;t}$ will differ by a cusp form, our definition of $\mathcal{F}_{\theta,\psi;t}$ is independent of our choice of $F$.

Next, we record several properties of the form $\mathcal{F}_{\theta,\psi;t}$. First we note that $\mathcal{F}_{\theta,\psi;t}\not\in \mathfrak{m}M_\Lambda$ by Theorem \ref{jun22-2015-947am}. Next, note that $\mathrm{Res}_\Lambda(A_{\theta,\psi}\mathcal{F}_{\theta,\psi;t} - \mathcal{E}_{\theta,\psi;t})=0,$ which implies \begin{align}\label{feb7-2015-1116am}
\mathcal{F}_{\theta,\psi;t} &~=~ \frac{\mathcal{E}_{\theta,\psi;t} + \mathcal{G}_{\theta,\psi;t}}{A_{\theta,\psi}}
\end{align}
\noindent for a unique $\mathcal{G}_{\theta,\psi;t}\in S_\Lambda$. In fact, we can describe $\mathcal{G}_{\theta,\psi;t}$ explicitly. By construction we have $$a_1(\mathcal{F}_{\theta,\psi;t}|\mathfrak{B}_j) ~=~ a_1(F|\mathfrak{B}_j) - \sum_{i=1}^s a_1(F|\mathfrak{B}_i)a_1(F_i|\mathfrak{B}_j) ~=~ 0$$ for $1\leq j \leq s.$ Therefore, by (\ref{feb7-2015-1116am}) we have $$\mathcal{G}_{\theta,\psi;t} ~=~ \sum_{i=1}^m a_1(\mathcal{E}_{\theta,\psi;t}|\mathfrak{B}_i)F_i.$$

We can say even more about the Hecke action on $\mathcal{F}_{\theta,\psi;t}$. Since $\mathrm{Res}_\Lambda$ is a $\mathfrak{H}_\Lambda$-module homomorphism, (\ref{feb7-2015-1116am}) and Proposition \ref{sep8-2014-1136am} imply that for all primes $\ell\nmid N$, \begin{align*}
\mathcal{F}_{\theta,\psi;t}|T_\ell ~=~ \left\{\begin{array}{rcl}(\theta(\ell)\ell(1+X)^{s(\ell)} + \psi(\ell))\cdot\mathcal{F}_{\theta,\psi;t} + F_\ell & & \ell\nmid Np\\
\psi(\ell)\cdot \mathcal{F}_{\theta,\psi;t} + F_\ell & & \ell  = p\end{array}\right.,
\end{align*} where $F_\ell\in S_\Lambda$ and the subscript denotes the fact that this cusp form may depend on $\ell$.

With the forms $\mathcal{F}_{\theta,\psi;t}$ in hand, we are now ready to state the main result of this section. For any $\Lambda$-module $M$, we set $M_{Q(\Lambda)}=M\otimes_\Lambda Q(\Lambda)$ and denote the $\Lambda$-dual of $M$ by $\dual{M}$.

%~~~~~~~~
\begin{theorem}\label{jun22-2015-148pm} Let $\mathcal{V}$ be a free $\Lambda$-submodule of $M_\Lambda$ that contains $S_\Lambda$ and is stable under the action of $\mathfrak{H}_\Lambda$. Then $$\mathcal{V}_{Q(\Lambda)}~=~\langle \mathcal{E}_{\theta_1,\psi_1;t_1},\dots,\mathcal{E}_{\theta_m,\psi_m;t_m},F_1,\dots,F_s\rangle_{Q(\Lambda)}$$ where the Eisenstein  series $\mathcal{E}_{\theta_i,\psi_i;t_i}\in M_\Lambda$ are distinct. Define $\mathfrak{H}(\mathcal{V})$ to be the $\Lambda$-subalgebra of $\mathrm{End}_\Lambda(\mathcal{V})$ generated by the Hecke operators $\{T_n:n\geq 1\}$ and recall that
\begin{align*}
\mathcal{V}_0 ~:=~ \{F\in \mathcal{V}_{Q(\Lambda)} : a_n(F)\in \Lambda~\text{for all }n\geq1\}.
\end{align*}

\noindent If the following conditions are satisfied for all integers $i$ and $j$ with $1\leq i<j\leq m$:
\begin{enumerate}\itemsep0em\item[$(i)$] $(\theta_i)_0\nequiv (\theta_j)_0~(\mathrm{mod}\,\pi) \text{ or~~}(\psi_i)_0\nequiv (\psi_j)_0~(\mathrm{mod}\,\pi),$

\item[$(ii)$] $(\theta_i)_0\nequiv (\psi_j\omega^{-1})_0~(\mathrm{mod}\,\pi) \text{ or~~}(\psi_i)_0\nequiv (\theta_j\omega)_0~(\mathrm{mod}\,\pi),$\end{enumerate}

\noindent then we have $\mathcal{V}_0 = \langle \mathcal{F}_{\theta_1,\psi_1;t_1},\dots,\mathcal{F}_{\theta_m,\psi_m;t_m},F_1,\dots,F_s\rangle_{\Lambda} \subset M_\Lambda$ and the pairing $$\mathcal{V}_0\times \mathfrak{H}(\mathcal{V})\rightarrow \Lambda:(F,H)\mapsto a_1(F|H)$$ is perfect.

\end{theorem}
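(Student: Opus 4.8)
The plan is to leverage Hida's duality theorem (Theorem \ref{feb6-2015-1235pm}) for the cuspidal part and build up the Eisenstein part using the forms $\mathcal{F}_{\theta_i,\psi_i;t_i}$ and the congruence structure encoded in Lemma \ref{jun22-2015-1240pm}. First I would establish that $\mathcal{V}_{Q(\Lambda)}$ has the claimed description: by Theorem \ref{dec31-2013-205pm}, $M_\Lambda\otimes_\Lambda Q(\Lambda)$ is spanned by $S_\Lambda$ together with all admissible Eisenstein series $\mathcal{E}_{\theta_0,\psi_0;t}$, which are linearly independent over $Q(\Lambda)$ since they have distinct systems of $T_\ell$-eigenvalues (as eigenvalues in $\Lambda$, not merely modulo $\mathfrak{m}$); since $\mathcal{V}$ is $\mathfrak{H}_\Lambda$-stable and contains $S_\Lambda$, decomposing $\mathcal{V}_{Q(\Lambda)}$ into generalized eigenspaces forces it to be spanned by $S_\Lambda$ and a subset of the $\mathcal{E}_{\theta_i,\psi_i;t_i}$.

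Next I would show $\mathcal{V}_0 = \langle \mathcal{F}_{\theta_1,\psi_1;t_1},\dots,\mathcal{F}_{\theta_m,\psi_m;t_m},F_1,\dots,F_s\rangle_\Lambda$. The inclusion $\supseteq$ is clear: each $\mathcal{F}_{\theta_i,\psi_i;t_i}\in M_\Lambda\subseteq\mathcal{V}_{Q(\Lambda)}$ (it lies in $\mathcal{V}_{Q(\Lambda)}$ because by \eqref{feb7-2015-1116am} it is a $Q(\Lambda)$-combination of $\mathcal{E}_{\theta_i,\psi_i;t_i}$ and cusp forms), and it has integral higher coefficients, and similarly for the $F_i$. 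For $\subseteq$, take $F\in\mathcal{V}_0$ and write $F = \sum_i c_i \mathcal{E}_{\theta_i,\psi_i;t_i} + (\text{cusp part})$ with $c_i\in Q(\Lambda)$; using \eqref{feb7-2015-1116am} rewrite this as $\sum_i (c_iA_{\theta_i,\psi_i})\mathcal{F}_{\theta_i,\psi_i;t_i} + (\text{new cusp part in }S_{\Lambda,Q(\Lambda)})$. Applying $\mathrm{Res}_\Lambda$ kills the cusp part and gives $\mathrm{Res}_\Lambda(F) = \sum_i c_iA_{\theta_i,\psi_i}\,\mathfrak{e}_{\theta_i,\psi_i;t_i}$, which lies in $C_\Lambda$; I then need that the $\mathfrak{e}_{\theta_i,\psi_i;t_i}$ are such that this forces each $c_iA_{\theta_i,\psi_i}\in\Lambda$ — here the key input is Theorem \ref{jun22-2015-947am}, that $\mathfrak{e}_{\theta_i,\psi_i;t_i}\notin\mathfrak{m}C_\Lambda$, combined with the hypotheses $(i)$ and $(ii)$, which via Lemma \ref{jun22-2015-1240pm} guarantee the reductions $\overline{\mathfrak{e}}_{\theta_i,\psi_i;t_i}$ in $C_\Lambda/\mathfrak{m}C_\Lambda$ are linearly independent (no two Eisenstein series are congruent mod $\pi$), so no cancellation of leading terms can occur. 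Once each $c_iA_{\theta_i,\psi_i}\in\Lambda$, the form $F - \sum_i(c_iA_{\theta_i,\psi_i})\mathcal{F}_{\theta_i,\psi_i;t_i}$ is a cusp form in $S_{\Lambda,Q(\Lambda)}$ with integral higher coefficients, hence (by perfectness of Hida's cuspidal pairing, or simply freeness of $S_\Lambda$) lies in $S_\Lambda = \langle F_1,\dots,F_s\rangle_\Lambda$.

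Finally, for perfectness of the pairing $\mathcal{V}_0\times\mathfrak{H}(\mathcal{V})\to\Lambda$, I would argue as follows. Since both $\mathcal{V}_0$ and $\mathfrak{H}(\mathcal{V})$ are finitely generated $\Lambda$-modules and the pairing is $\Lambda$-bilinear, it suffices to check the induced maps $\mathcal{V}_0\to\dual{\mathfrak{H}(\mathcal{V})}$ and $\mathfrak{H}(\mathcal{V})\to\dual{\mathcal{V}_0}$ are isomorphisms; by freeness of $\mathcal{V}_0$ (it has the explicit basis just produced) it is enough to show one of them is an isomorphism after tensoring with $Q(\Lambda)$ and then that it is surjective integrally, or to check it modulo $\mathfrak{m}$ and invoke Nakayama. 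For the $Q(\Lambda)$-statement: $\mathcal{V}_{0,Q(\Lambda)} = \mathcal{V}_{Q(\Lambda)}$ decomposes as Eisenstein eigenlines plus $S_{\Lambda,Q(\Lambda)}$, the Hecke algebra $\mathfrak{H}(\mathcal{V})_{Q(\Lambda)}$ correspondingly splits, and the pairing restricted to each Eisenstein eigenline is perfect because $a_1(\mathcal{F}_{\theta_i,\psi_i;t_i}) = a_1(\mathcal{E}_{\theta_i,\psi_i;t_i})/A_{\theta_i,\psi_i}$ is a unit multiple of the identity after normalization and $\mathcal{F}_{\theta_i,\psi_i;t_i}$ is a Hecke eigenform modulo $S_\Lambda$ with the stated eigenvalues, while on $S_{\Lambda,Q(\Lambda)}$ it is Hida's pairing; the cross terms vanish by the eigenvalue separation. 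To upgrade to integral perfectness I would use that $a_1(\mathcal{F}_{\theta_i,\psi_i;t_i}|T_n)$ for varying $n$ recovers, modulo the cuspidal Hecke algebra, the full system of eigenvalues of $\mathcal{E}_{\theta_i,\psi_i;t_i}$, so the "Eisenstein row" of the pairing matrix, together with Hida's cuspidal pairing being perfect and $\mathcal{F}_{\theta_i,\psi_i;t_i}\notin\mathfrak{m}M_\Lambda$, shows the pairing matrix is invertible over $\Lambda$ — concretely, reduce the whole pairing modulo $\mathfrak{m}$ and check the resulting $\mathbb{F}$-bilinear form on $(\mathcal{V}_0/\mathfrak{m})\times(\mathfrak{H}(\mathcal{V})/\mathfrak{m})$ is non-degenerate, using that the images $\overline{\mathcal{F}}_{\theta_i,\psi_i;t_i}$ are nonzero and, by hypotheses $(i)$–$(ii)$ and Lemma \ref{jun22-2015-1240pm}, pairwise non-congruent so their Hecke eigensystems are distinct mod $\mathfrak{m}$, and the cuspidal block is non-degenerate mod $\mathfrak{m}$ by Theorem \ref{feb6-2015-1235pm}. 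The main obstacle I anticipate is precisely this last integrality/non-degeneracy-mod-$\mathfrak{m}$ step: one must rule out that an Eisenstein direction collapses into the cuspidal part modulo $\mathfrak{m}$ or that two Eisenstein directions become proportional, and this is exactly where conditions $(i)$ and $(ii)$ are doing the essential work via Lemma \ref{jun22-2015-1240pm}; carefully checking that these congruence conditions suffice to separate all the relevant mod-$\mathfrak{m}$ eigensystems (including against the cusp forms, which requires knowing the $\overline{\mathcal{F}}_{\theta_i,\psi_i;t_i}$ are genuinely Eisenstein mod $\mathfrak{m}$, i.e. have the Eisenstein eigenvalue at $T_\ell$, not just some cuspidal eigenvalue) is the delicate heart of the argument.
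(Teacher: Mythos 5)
Your argument for the containment $\mathcal{V}_0 \subseteq \langle \mathcal{F}_{\theta_1,\psi_1;t_1},\dots,\mathcal{F}_{\theta_m,\psi_m;t_m},F_1,\dots,F_s\rangle_{\Lambda}$ has a genuine gap, and it occurs at the step where you assert that $\mathrm{Res}_\Lambda(F) = \sum_i c_iA_{\theta_i,\psi_i}\,\mathfrak{e}_{\theta_i,\psi_i;t_i}$ ``lies in $C_\Lambda$.'' For $F\in \mathcal{V}_0$ you only know $a_n(F)\in\Lambda$ for $n\geq 1$; the whole content of the theorem is precisely that this forces $a_0(F)\in\Lambda$, i.e.\ $F\in M_\Lambda$. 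Before you know that, $\mathrm{Res}_\Lambda(F)$ is only defined by extension of scalars and lives a priori in $C_{\Lambda}\otimes_\Lambda Q(\Lambda)$, not in $C_\Lambda$. The residue map sees constant terms at cusps other than $\infty$, and there is no reason those are $\Lambda$-integral when the constant term at $\infty$ is not. So the claim $\mathrm{Res}_\Lambda(F)\in C_\Lambda$ is essentially equivalent to the conclusion you are trying to prove, and the argument is circular. (Given that claim, your deduction that each $c_iA_{\theta_i,\psi_i}\in\Lambda$ via linear independence mod $\mathfrak{m}$ of the $\overline{\mathfrak{e}_i}$ is fine, and your treatment of $\mathcal{V}_{Q(\Lambda)}$ and of perfectness follows the standard lines the paper also invokes.)

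The paper avoids this by staying entirely on the side of the $q$-expansion at $\infty$, where integrality of the $a_n$, $n\geq 1$, is given. Writing $F=\sum_i (P_i/Q_i)\mathcal{F}_{\theta_i,\psi_i;t_i} + \sum_i(P_i'/Q_i')F_i$, it fixes $j$ and uses conditions $(i)$, $(ii)$ together with Lemma \ref{jun22-2015-1240pm} to build an explicit Hecke operator $H_j\in\bigcap_{i\neq j}\mathrm{Ann}_{\mathfrak{H}_\Lambda}(\mathcal{E}_{\theta_i,\psi_i;t_i})$ acting as a unit on $\mathcal{F}_{\theta_j,\psi_j;t_j}$ modulo $S_\Lambda$. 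Because $\mathcal{V}_0$ is $\mathfrak{H}_\Lambda$-stable and the cuspidal coordinates of $F|H_j$ are recovered by the integral functionals $a_1(\,\cdot\,|\mathfrak{B}_i)$, one gets $Q_j\mid a_n(\mathcal{F}_{\theta_j,\psi_j;t_j})$ for all $n\geq 1$. This, combined with $\mathcal{F}_{\theta_j,\psi_j;t_j}\notin\mathfrak{m}M_\Lambda$, finishes the case $\psi_j\neq\mathbbm{1}$ (where $a_0=0$, so some $a_n$ with $n\geq 1$ must be a unit). A second, genuinely delicate point which your route also misses: when $\psi_j=\mathbbm{1}$, it is possible that $a_0(\mathcal{F}_{\theta_j,\psi_j;t_j})$ is the only unit coefficient, and the paper must rule out $a_n(\mathcal{F}_{\theta_j,\psi_j;t_j})\in\mathfrak{m}$ for all $n\geq 1$ by a separate argument involving the $T_p$-action on weight-$2$ and weight-$0$ specializations. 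Any correct proof must confront this case explicitly; your residue-based argument, even if its integrality claim could be repaired, does not address why the Eisenstein direction cannot collapse into the constant when $\psi=\mathbbm{1}$.
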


\begin{proof} That the pairing is perfect follows from a well known argument of Hida (\cite[\S7.3, Theorem 5]{HidaBB}), so we will restrict our efforts to proving the above characterization of $\mathcal{V}_0$. Clearly $$\langle \mathcal{F}_{\theta_1,\psi_1;t},\dots,\mathcal{F}_{\theta_m,\psi_m;t},F_1,\dots, F_s\rangle_{\Lambda}~\subset~ \mathcal{V}_0.$$

\noindent Suppose $F\in \mathcal{V}_0$. Then $F$ can be written as $$F ~=~ \sum_{i=1}^m\frac{P_i}{Q_i}\mathcal{F}_{\theta_i,\psi_i;t_i} + \sum_{i=1}^s \frac{P_i^\prime}{Q_i^\prime} F_i$$ for some $P_i,Q_i,P_i^\prime,Q_i^\prime\in \Lambda$ with $\gcd(P_i,Q_i)=1=\gcd(P_i^\prime,Q_i^\prime)$ and $Q_i,Q_i^\prime \neq 0$. Let $1\leq j \leq m$. By $(i)$, $(ii)$, and Lemma \ref{jun22-2015-1240pm}, for each $i\neq j$ with $1\leq i \leq m$, there exists a prime $\ell_i\nmid Np$ such that $$\left(\theta_j(\ell_i)\ell_i(1+X)^{s(\ell_i)} - \psi_j(\ell_i)\right) - \left(\theta_i(\ell_i)\ell_i(1+X)^{s(\ell_i)} - \psi_i(\ell_i)\right) ~\in~\Lambda^\times$$

\noindent Define $$H_{j} ~=~\prod_{\substack{i=1\\i\neq j}}^m T_{\ell_i} - \left(\theta_i(\ell_i)\ell_i(1+X)^{s(\ell_i)} - \psi_i(\ell_i)\right)~\in~ \bigcap_{\substack{i=1\\i\neq j}}^m \mathrm{Ann}_{\mathfrak{H}_\Lambda}(\mathcal{E}_{\theta_i,\psi_i;t_i}).$$ Then $\mathcal{F}_{\theta_i,\psi_i;t_i}|H_j \in S_\Lambda$ if $i\neq j$, while $\mathcal{F}_{\theta_j,\psi_j;t_j}|H_j = U_j \mathcal{F}_{\theta_j,\psi_j;t_j} + F_j$ for some $F_j \in S_\Lambda$ and $U_j\in \Lambda^\times$. Therefore, $$F|H_j ~=~ U_j\frac{P_j}{Q_j}\mathcal{F}_{\theta_j,\psi_j;t_j} + \sum_{i=1}^s \frac{P_i^{\prime\prime}}{Q_i^{\prime\prime}} F_i,$$

\noindent where $P_i^{\prime\prime},Q_{i}^{\prime\prime}\in\Lambda$ with $\gcd(P_i^{\prime\prime},Q_i^{\prime\prime})=1$ and $Q_i^{\prime\prime}\neq0$. Furthermore, since $\mathcal{V}_0$ is stable under the action of $\mathfrak{H}_\Lambda$, for $1\leq i\leq s$ we have $P_i^{\prime\prime}/Q_i^{\prime\prime}= a_1(F|H_j|\mathfrak{B}_i)\in\Lambda.$ This implies that $Q_j\mid a_n(\mathcal{F}_{\theta_j,\psi_j;t_j})$ for all $n\geq1$. Since $\mathcal{F}_{\theta_j,\psi_j;t_j}\not\in \mathfrak{m}M_\Lambda$, if $\psi_j\neq \mathbbm{1}$ it must be the case $Q_j\in  \Lambda^\times$. Suppose $\psi_j=\mathbbm{1}$ and $Q_j \not\in \Lambda^\times$. Then $a_0(\mathcal{F}_{\theta_j,\psi_j;t_j})\in\Lambda^\times$, while $a_n(\mathcal{F}_{\theta_j,\psi_j;t_j})\in \mathfrak{m}$ for all $n\geq 1$. Furthermore, we know that for all $n\geq 1$, $$a_{np}(\mathcal{F}_{\theta_j,\psi_j;t_j}) ~=~a_n(\mathcal{F}_{\theta_j,\psi_j;t_j}|T_p) ~=~ a_n(\mathcal{F}_{\theta_j,\psi_j;t_j}) + a_n(F_p)$$ for some $F_p\in S_\Lambda$. Since $a_n(\mathcal{F}_{\theta_j,\psi_j;t_j})\in \mathfrak{m}$ for all $n\geq 1$, it must be the case that $a_n(F_p)\in \mathfrak{m}$ for all $n\geq 1$ as well. We will now show that such a form cannot exist.

By the $q$-expansion map, we have an embedding, $$\bigoplus_{k=0}^\infty M_k(\Gamma_1)_{\mathcal{O}}\hookrightarrow \mathcal{O}\llbracket q\rrbracket,$$ \cite[\S1]{Hida86a}. Let us denote the image of this map by $M_\infty$. It is well known that there is a natural action of $T_p$ that preserves the space $M_\infty$ \cite[\S1]{Hida86a}. Specifically, if $f\in M_\infty$ with $$f ~=~ \sum_{k=0}^\infty f_k~\subset~ \mathcal{O}\llbracket q\rrbracket$$ where $f_k\in M_k(\Gamma_1)_{\mathcal{O}}$, then $$f|T_p ~:=~ \sum_{k=0}^\infty f_k|T_p~\subset~ \mathcal{O}\llbracket q\rrbracket.$$

Let 
\begin{align*}
f_2&~=~v_{2,\mathbbm{1}}(\mathcal{F}_{\theta_j,\psi_j;t_j})\in M_2(Np)_{\mathcal{O}}^\mathrm{ord} \\
f_0&~=~a_0(v_{2,\mathbbm{1}}(\mathcal{F}_{\theta_j,\psi_j;t_j}))\in M_0(Np)_{\mathcal{O}}.
\end{align*}

\noindent We know that $f_0 \equiv f_2~(\mathrm{mod}\,\pi).$ Furthermore, since the Hecke action commutes with the specialization map $v_{2,\mathbbm{1}}$, we know that $$f_2|T_p ~=~ v_{2,\mathbbm{1}}(\mathcal{F}_{\theta_j,\psi_j;t_j}|T_p) ~=~ f_2 + v_{2,\mathbbm{1}}(F_p)~  \equiv f_2~(\mathrm{mod}\,\pi).$$ Hence, $$ (p-1)f_0 ~\equiv_\pi~ pf_0 - f_2 ~\equiv_\pi~ f_0|T_p - f_2|T_p ~\equiv_\pi~ (f_0-f_2)|T_p~\equiv~0 ~(\mathrm{mod}\,\pi).$$ However, $(p-1) f_0\in\mathcal{O}^\times$, which gives us our contradiction. Hence, $Q_j\in \Lambda^\times$. Since $j$ was arbitrary, we have the result. \end{proof}

\subsection{Universal ordinary cuspidal Hecke algebra modulo Eisenstein ideal}
%*****************************************
%\setcounter{figure}{10}
% \mathbb{N}oCaseChange{Homo Sapiens}

For this subsection we suppose $\mathcal{E}_{\theta,\psi}\in M_\Lambda$. Furthermore, we assume that $M_\theta M_\psi = N$ or $Np$, making $\mathcal{E}_{\theta,\psi}$ a normalized common eigenform for $\mathfrak{H}_\Lambda$.

%~~~~~~
\begin{prop}\label{sep10-2014-304pm} Let $I_{\theta,\psi}$ denote the image of $\mathrm{Ann}_{\mathfrak{H}_\Lambda}(\mathcal{E}_{\theta,\psi})$ in $\mathfrak{h}_\Lambda$. Then we have the following isomorphism of ${\Lambda}$-algebras $$\mathfrak{h}_\Lambda/I_{\theta,\psi}~\cong~ {\Lambda}/(A_{\theta,\psi}).$$ \end{prop}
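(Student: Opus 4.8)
The plan is to apply Theorem~\ref{jun22-2015-148pm} with $\mathcal{V} = M_\Lambda$ itself, which is free and finitely generated over $\Lambda$ by Proposition~\ref{dec10-209pm}, contains $S_\Lambda$, and is stable under $\mathfrak{H}_\Lambda$. Under the hypothesis $M_\theta M_\psi = N$ or $Np$, the series $\mathcal{E}_{\theta,\psi}$ is a normalized common eigenform, so $a_1(\mathcal{F}_{\theta,\psi}) = a_1(\mathcal{E}_{\theta,\psi})/A_{\theta,\psi} = 1/A_{\theta,\psi}$ after clearing the constant-term contribution; more importantly, the defining relation $(\ref{feb7-2015-1116am})$ says $A_{\theta,\psi}\mathcal{F}_{\theta,\psi} = \mathcal{E}_{\theta,\psi} + \mathcal{G}_{\theta,\psi}$ with $\mathcal{G}_{\theta,\psi}\in S_\Lambda$. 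First I would use this to identify $\mathfrak{H}(M_\Lambda)/(\text{relations from }\mathcal{E}_{\theta,\psi})$ with $\mathfrak{h}_\Lambda$, tracking how the Eisenstein eigensystem splits off.

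The key mechanism: let $\mathfrak{H}_\Lambda \to \Lambda$ be the $\Lambda$-algebra homomorphism giving the eigenvalues of $\mathcal{E}_{\theta,\psi}$ (from Proposition~\ref{sep8-2014-1136am}), with kernel $\mathrm{Ann}_{\mathfrak{H}_\Lambda}(\mathcal{E}_{\theta,\psi})$ — call the eigenvalue map $\lambda_{\theta,\psi}$. Then $\mathfrak{H}_\Lambda$ acts on $\mathcal{F}_{\theta,\psi}$ modulo $S_\Lambda$ through $\lambda_{\theta,\psi}$, by the displayed Hecke relations for $\mathcal{F}_{\theta,\psi}|T_\ell$ just before the statement of Theorem~\ref{jun22-2015-148pm}. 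I would show that the pairing $\mathfrak{h}_\Lambda \times S_\Lambda \to \Lambda$ of Theorem~\ref{feb6-2015-1235pm}, combined with the perfect pairing on $\mathcal{V}_0$ from Theorem~\ref{jun22-2015-148pm} (applied with $\mathcal{V}=M_\Lambda$, so $\mathcal{V}_0 = \langle \mathcal{F}_{\theta,\psi}, F_1,\dots,F_s\rangle_\Lambda$ — note the condition on distinct Eisenstein series is vacuous here since there is only one), exhibits $\mathfrak{h}_\Lambda/I_{\theta,\psi}$ as the quotient of $\Lambda$ dual to the rank-one submodule $\Lambda\cdot\mathcal{F}_{\theta,\psi}$ inside $M_\Lambda / S_\Lambda$. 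Concretely: $I_{\theta,\psi}$ annihilates the image of $\mathcal{E}_{\theta,\psi}$ in $C_\Lambda$ under $\mathrm{Res}_\Lambda$, which by Theorem~\ref{jun22-2015-947am} is $A_{\theta,\psi}\cdot\mathfrak{e}_{\theta,\psi}$ with $\mathfrak{e}_{\theta,\psi}\notin\mathfrak{m}C_\Lambda$; so an element $H\in\mathfrak{h}_\Lambda$ lies in $I_{\theta,\psi}$ iff $H$ kills $A_{\theta,\psi}\mathcal{F}_{\theta,\psi} \bmod S_\Lambda$, i.e. iff the scalar $\lambda_{\theta,\psi}(\tilde H) \in \Lambda$ (for any lift $\tilde H\in\mathfrak{H}_\Lambda$) is divisible by $A_{\theta,\psi}$. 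Hence $\mathfrak{h}_\Lambda/I_{\theta,\psi}\cong \lambda_{\theta,\psi}(\mathfrak{H}_\Lambda)/(A_{\theta,\psi}\Lambda\cap\lambda_{\theta,\psi}(\mathfrak{H}_\Lambda))$, and I would finish by showing $\lambda_{\theta,\psi}$ is surjective onto $\Lambda$ (so $\lambda_{\theta,\psi}(\mathfrak{H}_\Lambda)=\Lambda$), which follows because $\mathcal{E}_{\theta,\psi}$ is normalized — $T_1$ acts as $1$ — so $\Lambda\subset\lambda_{\theta,\psi}(\mathfrak{H}_\Lambda)$, giving $\mathfrak{h}_\Lambda/I_{\theta,\psi}\cong\Lambda/(A_{\theta,\psi})$.

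The main obstacle I anticipate is making the surjectivity/well-definedness of the map $\mathfrak{h}_\Lambda/I_{\theta,\psi}\to\Lambda/(A_{\theta,\psi})$ rigorous: one must check that $\mathfrak{h}_\Lambda \to M_\Lambda/S_\Lambda$ carries $\mathfrak{h}_\Lambda$ onto $\Lambda\cdot\mathcal{F}_{\theta,\psi}$ with precise image, and that the kernel of $\mathfrak{h}_\Lambda \to \Lambda$, $H\mapsto$ (coefficient of $\mathcal{F}_{\theta,\psi}$ in $\mathcal{E}_{\theta,\psi}|H \bmod S_\Lambda$... reinterpreted through $\mathrm{Res}_\Lambda$) is exactly $I_{\theta,\psi}$. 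This is where Theorem~\ref{jun22-2015-148pm} does the real work, because it guarantees $\mathcal{F}_{\theta,\psi}$ together with a $\Lambda$-basis of $S_\Lambda$ spans $\mathcal{V}_0=(M_\Lambda)_0$ over $\Lambda$, so that the quotient $M_\Lambda/S_\Lambda$ restricted to the "Eisenstein part" is genuinely free of rank one generated by $\mathcal{F}_{\theta,\psi}$ rather than merely after inverting $A_{\theta,\psi}$; the non-exceptional hypotheses (H1),(H2) are precisely what the new duality theorem lets us avoid here. The remaining bookkeeping — compatibility of the $\Lambda$-algebra structures and verifying the isomorphism is $\Lambda$-linear — is routine given the eigenvalue formulas in Proposition~\ref{sep8-2014-1136am}.
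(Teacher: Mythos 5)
Your strategy --- characterize $I_{\theta,\psi}$ via the eigenvalue homomorphism $\lambda_{\theta,\psi}$ and use the refined duality theorem to produce a Hecke operator $\mathfrak{B}_0$ dual to $\mathcal{F}_{\theta,\psi}$ against $S_\Lambda$ --- is exactly the paper's, but there is a genuine error in the invocation of Theorem~\ref{jun22-2015-148pm}. Applying it with $\mathcal{V} = M_\Lambda$ is not legitimate: by Theorem~\ref{dec31-2013-205pm}, $M_\Lambda\otimes_\Lambda Q(\Lambda)$ is spanned by $S_\Lambda$ together with \emph{every} ordinary $\Lambda$-adic Eisenstein series of level $N$, so your assertion that ``the condition on distinct Eisenstein series is vacuous here since there is only one'' is false; in general $m>1$. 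Worse, whenever $M_\Lambda$ contains two Eisenstein series $\mathcal{E}_{\theta_0,\psi_0;t}$ and $\mathcal{E}_{\theta_0,\psi_0;t'}$ attached to the same pair of primitive characters but with different level-raising parameters $t\neq t'$ (which happens whenever $N/M_\theta M_\psi$ has more than one divisor prime to $p$), condition $(i)$ fails outright, and Theorem~\ref{jun22-2015-148pm} simply cannot be invoked to describe $(M_\Lambda)_0$. Your stated equality $(M_\Lambda)_0 = \langle\mathcal{F}_{\theta,\psi},F_1,\dots,F_s\rangle_\Lambda$ is also false whenever $m>1$, since $(M_\Lambda)_0$ must contain the $\mathcal{F}_{\theta_i,\psi_i;t_i}$ for every Eisenstein component. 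The correct move is to take the smaller module $\mathcal{V} = S_\Lambda + \mathfrak{H}_\Lambda\cdot\mathcal{E}_{\theta,\psi}$: this is a free $\Lambda$-module containing $S_\Lambda$, is $\mathfrak{H}_\Lambda$-stable, and satisfies $\mathcal{V}\otimes_\Lambda Q(\Lambda)=\langle\mathcal{E}_{\theta,\psi},F_1,\dots,F_s\rangle_{Q(\Lambda)}$ with a single Eisenstein series, so $(i)$ and $(ii)$ are vacuous. This is what the paper's proof implicitly uses to produce $\mathfrak{B}_0\in\mathfrak{H}_\Lambda$ with $a_1(\mathcal{F}_{\theta,\psi}|\mathfrak{B}_0)=1$ and $a_1(F|\mathfrak{B}_0)=0$ for $F\in S_\Lambda$, which is what drives the reverse inclusion $\ker\subseteq I_{\theta,\psi}$. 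Once the choice of $\mathcal{V}$ is corrected, your remaining steps --- well-definedness of $\lambda_{\theta,\psi}$ modulo $A_{\theta,\psi}$ on $\mathfrak{h}_\Lambda$ (using that any $K\in\ker(\mathfrak{H}_\Lambda\twoheadrightarrow\mathfrak{h}_\Lambda)$ satisfies $\lambda_{\theta,\psi}(K)=A_{\theta,\psi}\,a_1(\mathcal{F}_{\theta,\psi}|K)\in(A_{\theta,\psi})$), surjectivity from $\Lambda$-linearity, and the biconditional via $\mathfrak{B}_0$ --- agree with the paper's proof.

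Two smaller inaccuracies: $a_1(\mathcal{F}_{\theta,\psi})$ cannot equal $1/A_{\theta,\psi}$, since $\mathcal{F}_{\theta,\psi}\in M_\Lambda$ forces $a_1(\mathcal{F}_{\theta,\psi})\in\Lambda$; from $(\ref{feb7-2015-1116am})$ one has $a_1(\mathcal{F}_{\theta,\psi})=(1+a_1(\mathcal{G}_{\theta,\psi}))/A_{\theta,\psi}$, and $a_1(\mathcal{G}_{\theta,\psi})$ need not vanish. And the phrase ``$H$ kills $A_{\theta,\psi}\mathcal{F}_{\theta,\psi}\bmod S_\Lambda$'' for $H\in\mathfrak{h}_\Lambda$ is not literally meaningful because the $\mathfrak{H}_\Lambda$-action on $M_\Lambda/S_\Lambda$ does not factor through $\mathfrak{h}_\Lambda$; the quantity that is well-defined is $\lambda_{\theta,\psi}(\tilde H)\bmod A_{\theta,\psi}$ for a lift $\tilde H$, which is what you correctly use immediately afterward.
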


\begin{proof} Let $\mathcal{G}_{\theta,\psi}$ be the cusp form associated to the Eisenstein series $\mathcal{E}_{\theta,\psi}$ by (\ref{feb7-2015-1116am}). Consider the map $\mathfrak{h}_\Lambda\rightarrow \Lambda/(A_{\theta,\psi})$ defined by \begin{align}\label{jun22-2015-205pm}H~\mapsto~ a_1(\mathcal{G}_{\theta,\psi}|H)~(\mathrm{mod}\,A_{\theta,\psi}).\end{align} Note that $a_1(\mathcal{G}_{\theta,\psi}|H)\equiv a_1(\mathcal{E}_{\theta,\psi}|\tilde{H})~(\mathrm{mod}\,A_{\theta,\psi})$ for any lift $\tilde{H}$ of $H$ to $\mathfrak{H}_\Lambda$. From this congruence and the fact that $\mathcal{E}_{\theta,\psi}$ is a normalized common eigenform for $\mathfrak{H}_\Lambda$, we know that the map (\ref{jun22-2015-205pm}) is a surjective $\Lambda$-algebra homomorphism. Furthermore, it is clear that $I_{\theta,\psi}$ is contained in the kernel of this map.  

Suppose $H$ lies in the kernel. Then $a_1(\mathcal{E}_{\theta,\psi}|\tilde{H})\in(A_{\theta,\psi})$, where $\tilde{H}$ is \emph{any} lift of $H$ to $\mathfrak{H}_\Lambda$. By Theorem \ref{jun22-2015-148pm} we know that there exists a Hecke operator $\mathfrak{B}_0\in\mathfrak{H}_\Lambda$ such that $a_1(\mathcal{F}_{\theta,\psi}|\mathfrak{B}_0)=1$ with $a_1(F|\mathfrak{B}_0)=0$ for all $F\in S_\Lambda$. This implies  $a_1(\mathcal{E}_{\theta,\psi}|\mathfrak{B}_0) = A_{\theta,\psi}$. Set $$\tilde{H}^\prime~=~ \tilde{H} - \frac{a_1(\mathcal{E}_{\theta,\psi}|\tilde{H})}{A_{\theta,\psi}}\mathfrak{B}_0.$$ Then by construction $\tilde{H}^\prime\in \mathrm{Ann}_{\mathfrak{H}_\Lambda}(\mathcal{E}_{\theta,\psi})$ and $\tilde{H}^\prime\mapsto H$ under the natural projection $\mathfrak{H}_\Lambda\twoheadrightarrow \mathfrak{h}_\Lambda$. 
\end{proof}

%~~~~~~~~~~~~~~~~~~~~~~~~~~~~~~~~~~~
%~~~~~~~~~~~~~~~~~~~~~~~~~~~~~~~~~~~
%~~~~~~~~~~~~~~~~~~~~~~~~~~~~~~~~~~~
\section{Eichler-Shimura cohomology groups and the Iwasawa main conjecture}\label{secIWA}

Set $\mathcal{O} = \mathbb{Z}_p[\theta,\psi]$. For this section we will assume that $\mathcal{E}_{\theta,\psi}\in M_\Lambda$ with $M_\theta M_\psi = N$ or $Np$ and $A_{\theta,\psi}\not\in \Lambda^\times$. We assume the former so that $\mathcal{E}_{\theta,\psi}$ is a normalized common eigenform for $\mathfrak{H}_\Lambda$, while the latter is assumed to ensure $\mathfrak{h}_\Lambda \neq I_{\theta,\psi}$. In this section we will use the isomorphism $\mathfrak{h}_\Lambda/I_{\theta,\psi}\cong \Lambda/(A_{\theta,\psi})$ to extend Otha's proof of the Iwasawa main conjecture over $\mathbb{Q}$. We begin by introducing the $p$-adic Eichler-Shimura cohomology groups and their basic properties.  

Let $X_1(N_r)$ denote the canonical model of $\Gamma_r\setminus \mathbb{H}^*$ ($\mathbb{H}^*:=\mathbb{H}\cup\mathbb{Q}\cup\{\infty\}$) over $\mathbb{Q}$ in which the cusp at infinity is $\mathbb{Q}$-rational.

%~~~~~~~~~~~~~~
\begin{definition} The $p$-adic Eichler-Shimura cohomology group of level $N$ is defined to be $$\mathcal{T}~=~ \bigg(\varprojlim_r H^1_{\acute{\mathrm{e}}\mathrm{t}}(X_1(Np^r)\otimes_\mathbb{Q}\overline{\mathbb{Q}},\mathbb{Z}_p)^\mathrm{ord}\bigg) \widehat{\otimes}_{\mathbb{Z}_p} \mathcal{O}$$ where the projective limit is taken with respect to the trace mappings of \'{e}tale cohomology groups.

\end{definition}

\noindent There are natural actions of $G_\mathbb{Q}$ and $\mathfrak{h}^*_\Lambda$ on $\mathcal{T}$, and these actions commute with one another. Furthermore, $\mathcal{T}_{Q({\Lambda})}$ is a free $\mathfrak{h}^*_{Q({\Lambda})}$-module of rank 2, where $\mathfrak{h}_{Q(\Lambda)}^*:=\mathfrak{h}^*_\Lambda \otimes_\Lambda Q(\Lambda)$ \cite[Lemma 5.1.2]{OhtaEC1}. Therefore, we have a Galois representation $\rho:G_\mathbb{Q} \rightarrow \mathrm{GL}_2(\mathfrak{h}^*_{Q(\Lambda)}),$ and one can show that this representation satisfies the usual  Eichler-Shimura relations \cite[Theorem 5.1.5]{OhtaEC1}: If $\ell\nmid Np$ is a prime and $\Phi_\ell\in G_\mathbb{Q}$ is a geometric Frobenius at $\ell$, we have \begin{align*}\det(1-\rho(\Phi_\ell) X) ~=~ 1 - T_\ell^*X + \ell T_{\ell,\ell}^* X^2.\end{align*} 

Let $\chi_p:G_\mathbb{Q}\rightarrow \mathbb{Z}_p^\times$ denote the $p$-adic cyclotomic character. Then for all primes $\ell\nmid Np$, we have $\chi_p(\Phi_\ell) = \ell^{-1}$ while $\iota(\ell)$ acts on $\mathfrak{h}^*_\Lambda$ as multiplication by $T_{\ell,\ell}^*$. This implies that $\det\left(\rho(\Phi_\ell)\right)= \chi_{p}(\Phi_\ell)^{-1}\iota(\chi_{p}(\Phi_\ell))^{-1}$, which in turn implies $\det(\rho(\sigma)) =\chi_{p}(\sigma)^{-1}\iota(\chi_{p}(\sigma)^{-1})$ for all $\sigma\in G_\mathbb{Q}$ by the \v{C}ebotarev density theorem.

%~~~~~~~~~~~~~~~~~~~~~~~~~~~~~~~~~~~~~~~~~~~~~~~~~~~~~~~~~~~~~~~~~~~~~~~~~~
\subsection{The method of Kurihara and Harder-Pink}

In this section we employ the method of Kurihara \cite{Kuri} and Harder-Pink \cite{HP} to construct an abelian pro-$p$ extension $L/F_\infty$ from the representation $\rho$.

%~~~~~~~~
\begin{prop}[\cite{OhtaEC2}, Corollary 1.3.8] We have the following exact sequence of $\mathfrak{h}^*_\Lambda$-modules:
\begin{align*}
\begin{CD}0 @>>> \mathcal{T}_{+} @>>> \mathcal{T} @>>> \mathcal{T}/\mathcal{T}_{+} @>>> 0.\end{CD}
\end{align*} where $\mathcal{T}_+:= \mathcal{T}^{I_p}$. Furthermore, $\sigma\in I_p$ acts on $\mathcal{T}/\mathcal{T}_{+}$ by $\chi_p(\sigma)^{-1} \iota(\chi_p(\sigma))^{-1}.$

\end{prop}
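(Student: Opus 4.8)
The plan is to derive the proposition from the local ordinarity of $\mathcal{T}$ at $p$. Applying $\mathrm{Hom}_{\mathbb{Z}_p}(-,\mathbb{Z}_p)$ to the connected--\'etale filtrations of the ordinary parts of the $p$-divisible groups of the Jacobians of the $X_1(Np^r)$ and passing to the projective limit, one obtains (Mazur--Wiles, Wiles; see \cite[\S 4--5]{OhtaEC1}) a short exact sequence of $\mathfrak{h}^*_\Lambda[D_p]$-modules
$$0 \longrightarrow \mathcal{T}' \longrightarrow \mathcal{T} \longrightarrow \mathcal{T}'' \longrightarrow 0,$$
in which $D_p := G_{\mathbb{Q}_p}$ acts on the submodule $\mathcal{T}'$ through an \emph{unramified} character $\alpha$, and in which $\mathcal{T}'$ and $\mathcal{T}''$ are $\Lambda$-torsion free. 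Since the $\mathfrak{h}^*_\Lambda$-action on $\mathcal{T}$ commutes with the $G_{\mathbb{Q}}$-action, $\mathcal{T}_+ = \mathcal{T}^{I_p}$ is automatically an $\mathfrak{h}^*_\Lambda$-submodule of $\mathcal{T}$, so the sequence $0 \to \mathcal{T}_+ \to \mathcal{T} \to \mathcal{T}/\mathcal{T}_+ \to 0$ is tautologically exact as a sequence of $\mathfrak{h}^*_\Lambda$-modules; what remains is to identify $\mathcal{T}_+$ with $\mathcal{T}'$ and to read off the $I_p$-action on the quotient.

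Because $\det\rho$ sends $\sigma$ to $\chi_p(\sigma)^{-1}\iota(\chi_p(\sigma))^{-1}$, as recorded above, $D_p$ acts on $\mathcal{T}''$ through $\chi_p^{-1}\iota(\chi_p)^{-1}\alpha^{-1}$, hence $I_p$ acts on $\mathcal{T}''$ through $\chi_p^{-1}\iota(\chi_p)^{-1}$, the factor $\alpha$ being unramified. Since $\chi_p$ maps $I_p$ onto $\mathbb{Z}_p^\times$, fix $\sigma_0\in I_p$ with $\chi_p(\sigma_0)=u=1+p$; under $\iota(u)=1+X$ this $\sigma_0$ acts on $\mathcal{T}''$ as multiplication by $u^{-1}(1+X)^{-1}$, and $u^{-1}(1+X)^{-1}-1 = (1+X)^{-1}(u^{-1}-1-X)$ equals, up to a unit, the nonzero element $X-(u^{-1}-1)$ of the domain $\Lambda$, so it is a non-zero-divisor on the $\Lambda$-torsion-free module $\mathcal{T}''$. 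Now the inclusion $\mathcal{T}'\subseteq\mathcal{T}^{I_p}$ is clear since $\alpha$ is unramified; conversely, given $x\in\mathcal{T}^{I_p}$ with image $\bar x\in\mathcal{T}''$ we have $\sigma_0\bar x=\bar x$, so $(u^{-1}(1+X)^{-1}-1)\bar x=0$ and therefore $\bar x=0$, i.e. $x\in\ker(\mathcal{T}\to\mathcal{T}'')=\mathcal{T}'$. Thus $\mathcal{T}_+=\mathcal{T}'$, whence $\mathcal{T}/\mathcal{T}_+\cong\mathcal{T}''$, on which $I_p$ acts through $\chi_p(\sigma)^{-1}\iota(\chi_p(\sigma))^{-1}$, $\sigma\in I_p$, as claimed.

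The only genuine obstacle is the input invoked in the first paragraph: the existence of a $D_p$-stable filtration of $\mathcal{T}$ with \emph{unramified sub}, together with the $\Lambda$-torsion-freeness of its graded pieces. This is the structural theorem on $\Lambda$-adic ordinary Galois representations of Mazur--Wiles, Wiles, Hida and Ohta, which we may freely invoke; everything past it is the elementary non-zero-divisor manipulation above. I would only stress the normalisation point: it is essential that, in the ordinary filtration of $\mathcal{T}=\varprojlim H^1_{\mathrm{et}}$, it be the \emph{sub} rather than the quotient that is unramified --- that is precisely what forces $\mathcal{T}^{I_p}$ to equal that piece rather than merely to inject into the unramified quotient, and it is what makes $\mathcal{T}/\mathcal{T}_+$ carry the full ramified determinant character on $I_p$.
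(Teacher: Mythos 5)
The paper does not prove this proposition; it is imported by citation from Ohta (\cite{OhtaEC2}, Corollary 1.3.8), which in turn rests on the $\Lambda$-adic ordinarity theorem of \cite{OhtaEC1}. Your reconstruction is faithful to that argument: you correctly identify the essential input (the $D_p$-stable filtration of $\mathcal{T}$ with \emph{unramified sub} and $\Lambda$-torsion-free graded pieces, which is the content of the structure theorem); you correctly read the $I_p$-action on the quotient off the determinant $\chi_p^{-1}\iota(\chi_p)^{-1}$, which the paper has already established via the Eichler--Shimura relation and \v{C}ebotarev; and the non-zero-divisor step pinning down $\mathcal{T}^{I_p}=\mathcal{T}'$ is exactly the right elementary mechanism, since $X-(u^{-1}-1)$ is a nonzero element of the domain $\Lambda$. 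Your remark about the normalisation---that it is the \emph{sub} of $H^1_{\text{\'et}}$ that must be unramified for $\mathcal{T}^{I_p}$ to be the rank-one piece rather than zero---is precisely the point on which the whole statement hinges, and is correct for the cohomological (adjoint-Hecke) normalisation used here.
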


Let $\sigma_0\in I_p$ be an element satisfying $\sigma_0(\zeta) = \zeta^{1+p}=\zeta^{u}$ for all primitive $p$-power roots of unity $\zeta\in\overline{\mathbb{Q}}$. Then the action of $\sigma_0$ on the quotient $\mathcal{T}/\mathcal{T}_{+}$ is given by $u^{-1}(1+X)^{-1}$. Set $S = u^{-1}(1+X)^{-1} - 1$ and define $$\mathcal{T}_- ~=~ \left\{x\in \mathcal{T}: \sigma_0\cdot x =(S+1)x\right\}.$$ 

\noindent Since the action of $G_{\mathbb{Q}_p}$ commutes with the action of $\mathfrak{h}^*_\Lambda$, we know that $\mathcal{T}_-$ is an $\mathfrak{h}^*_\Lambda$-module. For a ${\Lambda}$-module $M$, we set $M_S= M\otimes_{{\Lambda}}{\Lambda}[S^{-1}]$. One can show that $\mathcal{T}_S$ is a direct sum of the $\mathfrak{h}^*_{\Lambda,S}$-modules $\mathcal{T}_{-,S}$ and $\mathcal{T}_{+,S}$.

%~~~~~~~~~~~~
\begin{prop}[\cite{OhtaEC1}, Lemma 5.1.3.]\label{feb13-2015-1008am} $\mathcal{T}_{-,Q({\Lambda})}$, $\mathcal{T}_{+,Q({\Lambda})}$ are free $\mathfrak{h}^*_{Q({\Lambda})}$-modules of rank 1.

\end{prop}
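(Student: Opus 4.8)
The goal is to show that $\mathcal{T}_{-,Q(\Lambda)}$ and $\mathcal{T}_{+,Q(\Lambda)}$ are free $\mathfrak{h}^*_{Q(\Lambda)}$-modules of rank $1$. We already know from \cite[Lemma 5.1.2]{OhtaEC1} that $\mathcal{T}_{Q(\Lambda)}$ is free of rank $2$ over $\mathfrak{h}^*_{Q(\Lambda)}$, and that after inverting $S$ we have a decomposition $\mathcal{T}_S = \mathcal{T}_{-,S}\oplus \mathcal{T}_{+,S}$ as $\mathfrak{h}^*_{\Lambda,S}$-modules coming from the action of $\sigma_0\in I_p$: on $\mathcal{T}/\mathcal{T}_+$ the element $\sigma_0$ acts by the scalar $S+1 = u^{-1}(1+X)^{-1}$, and $\mathcal{T}_-$ is defined as the $(S+1)$-eigenspace of $\sigma_0$. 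The plan is to base change this decomposition to $Q(\Lambda)$ and use a rank count together with an argument that each summand has no nonzero $\mathfrak{h}^*_{Q(\Lambda)}$-torsion.

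First I would base change $\mathcal{T}_S = \mathcal{T}_{-,S}\oplus\mathcal{T}_{+,S}$ along $\Lambda[S^{-1}]\to Q(\Lambda)$, which is flat (it is a localization followed by a localization, since $S\neq 0$ in $\Lambda$), obtaining $\mathcal{T}_{Q(\Lambda)} = \mathcal{T}_{-,Q(\Lambda)}\oplus \mathcal{T}_{+,Q(\Lambda)}$ as $\mathfrak{h}^*_{Q(\Lambda)}$-modules. Since $\mathcal{T}_{Q(\Lambda)}$ is free of rank $2$ over $\mathfrak{h}^*_{Q(\Lambda)}$, each summand is a finitely generated projective $\mathfrak{h}^*_{Q(\Lambda)}$-module, and the ranks of the two summands (computed componentwise over $\mathrm{Spec}\,\mathfrak{h}^*_{Q(\Lambda)}$, which is a finite product of fields since $Q(\Lambda)$ is a field and $\mathfrak{h}^*_\Lambda$ is finite free over $\Lambda$) must add up to $2$ at each point. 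So it suffices to rule out that one of the summands is $0$ and the other has rank $2$ (at some component), i.e. to show each of $\mathcal{T}_{\pm,Q(\Lambda)}$ is nonzero, of rank $\geq 1$, at every component — then by symmetry of the rank sum each has rank exactly $1$ everywhere, hence is free of rank $1$ (a projective module of constant rank $1$ over a product of fields is free).

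To see that $\mathcal{T}_{+,Q(\Lambda)}\neq 0$ at each component: $\mathcal{T}_+ = \mathcal{T}^{I_p}$ is the kernel of a surjection onto $\mathcal{T}/\mathcal{T}_+$, and a standard computation (the local-at-$p$ structure of the Galois representation $\rho$, where inertia acts through an upper-triangular form with one trivial diagonal entry) shows $\mathcal{T}_+$ and $\mathcal{T}/\mathcal{T}_+$ each contain a free $\mathfrak{h}^*_\Lambda$-module meeting every component nontrivially; more directly, $\mathcal{T}_+$ is stable under $G_{\mathbb{Q}_p}$ and is realized as the unramified part, and one knows it contains vectors on which Frobenius acts by a unit-like eigenvalue, so it cannot vanish on any component of $\mathrm{Spec}\,\mathfrak{h}^*_{Q(\Lambda)}$; similarly for $\mathcal{T}_-$, whose defining eigenspace condition for $\sigma_0$ picks out the complementary ramified direction. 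I would phrase this using the Eichler–Shimura relation $\det(1-\rho(\Phi_\ell)X) = 1 - T_\ell^* X + \ell T_{\ell,\ell}^* X^2$ together with the known shape of $\rho|_{G_{\mathbb{Q}_p}}$ (Proposition immediately preceding this statement gives the inertia action on $\mathcal{T}/\mathcal{T}_+$): the two local eigendirections are genuinely distinct after inverting $S$, so neither $\mathcal{T}_{+,S}$ nor $\mathcal{T}_{-,S}$ is zero, and this persists after the flat base change to $Q(\Lambda)$.

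The main obstacle I anticipate is the nonvanishing/rank-exactly-one step: a priori a projective module over the possibly non-connected ring $\mathfrak{h}^*_{Q(\Lambda)}$ could have rank $0$ on one component and $2$ on another, so one genuinely needs an input showing the $\sigma_0$-eigenspace decomposition is "balanced" at every component, not just globally. The cleanest way to secure this is to observe that $\mathcal{T}_-$ and $\mathcal{T}_+$ are both $\mathfrak{h}^*_\Lambda$-module direct summands after inverting $S$ and that the quotient $\mathcal{T}/\mathcal{T}_+$, on which $I_p$ acts by the nontrivial character $\chi_p^{-1}\iota(\chi_p)^{-1}$, is itself faithful as an $\mathfrak{h}^*_\Lambda$-module (equivalently, $\mathcal{T}_+$ is a proper $\mathfrak{h}^*_{Q(\Lambda)}$-direct summand at every component) — this is exactly the content of \cite[Lemma 5.1.3]{OhtaEC1}, and I would cite that structural fact about the ordinary Eichler–Shimura representation rather than reprove it, then conclude as above that both summands have constant rank $1$ and are therefore free of rank $1$.
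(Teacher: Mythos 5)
The paper gives no internal proof of this proposition: it is stated as a direct citation of \cite[Lemma 5.1.3]{OhtaEC1}, and no further argument is offered. Your proposal attempts to reconstruct an argument, and the reduction you carry out is sound as far as it goes: the decomposition $\mathcal{T}_S = \mathcal{T}_{-,S}\oplus\mathcal{T}_{+,S}$ does base-change along the flat map $\Lambda[S^{-1}]\to Q(\Lambda)$ to a direct sum decomposition of the free rank-$2$ module $\mathcal{T}_{Q(\Lambda)}$ over the Artinian ring $\mathfrak{h}^*_{Q(\Lambda)}$, and you correctly identify that the only thing left to establish is that the rank is $1$ (and not $0$ or $2$) on each component of $\mathrm{Spec}\,\mathfrak{h}^*_{Q(\Lambda)}$.

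The problem is that your justification of that last, essential step is circular. You write that the balanced-rank statement ``is exactly the content of \cite[Lemma 5.1.3]{OhtaEC1}, and I would cite that structural fact \dots\ rather than reprove it.'' But \cite[Lemma 5.1.3]{OhtaEC1} \emph{is} the proposition you are proving, so your proof invokes its own conclusion at the decisive moment. The surrounding heuristic (the Eichler--Shimura relation, the shape of $\rho|_{G_{\mathbb{Q}_p}}$, ``vectors on which Frobenius acts by a unit-like eigenvalue'') gestures at where a genuine argument lives, but none of it is made precise enough to rule out, on some component, that $I_p$ acts on all of $\mathcal{T}_{Q(\Lambda)}$ through the same character and one of $\mathcal{T}_{\pm,Q(\Lambda)}$ is zero. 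An actual proof at this point needs a concrete input from Ohta's structure theory of the ordinary filtration --- for instance, that $\mathcal{T}_+$ and $\mathcal{T}/\mathcal{T}_+$ are each themselves free of rank $1$ over $\mathfrak{h}^*_\Lambda$ (or faithful), which is established in \cite{OhtaEC1,OhtaEC2} \emph{prior} to Lemma 5.1.3 and is what Ohta uses. Either supply that input explicitly, or do as the paper does and cite \cite[Lemma 5.1.3]{OhtaEC1} outright for the whole proposition rather than embedding the citation inside a purported proof.
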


\noindent Fixing $\mathfrak{h}^*_{Q({\Lambda})}$-bases for $\mathcal{T}_{-,Q({\Lambda})}$ and $\mathcal{T}_{+,Q({\Lambda})}$ (in that order), we write $${\rho(\sigma) ~=~ \begin{pmatrix}a(\sigma) & b(\sigma) \\ c(\sigma) & d(\sigma)\end{pmatrix}}.$$

Let $\mathcal{B}$ and $\mathcal{C}$ denote the $\mathfrak{h}^*_{\Lambda,S}$-submodules of $\mathfrak{h}_{Q({\Lambda})}^*$ generated by the sets $\{b(\sigma):\sigma \in G_\mathbb{Q}\}$ and $\{c(\sigma):\sigma \in G_\mathbb{Q}\}$, respectively.

%~~~~~~~~
\begin{prop}[\cite{OhtaCM}, Lemma 3.3.6.]\label{apr17-2014-1057am} $\mathcal{B}$ and $\mathcal{C}$ are faithful $\mathfrak{h}^*_{\Lambda,S}$-modules.

\end{prop}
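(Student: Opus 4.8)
The plan is to show that $\mathcal{B}$ and $\mathcal{C}$ are faithful $\mathfrak{h}^*_{\Lambda,S}$-modules by a Galois-cohomological argument combined with the Eichler--Shimura relations. I will treat $\mathcal{C}$ first; the argument for $\mathcal{B}$ is symmetric (interchanging the roles of $\mathcal{T}_-$ and $\mathcal{T}_+$, i.e. conjugating $\rho$ by the permutation of the chosen bases). Let $J = \mathrm{Ann}_{\mathfrak{h}^*_{\Lambda,S}}(\mathcal{C})$; we want $J = 0$. The key observation is that $J \cdot c(\sigma) = 0$ for all $\sigma \in G_\mathbb{Q}$ forces a strong restriction on the shape of $\rho \pmod{J}$: reducing the entries of $\rho$ modulo $J$ (which makes sense after inverting $S$, since $\mathcal{T}_S = \mathcal{T}_{-,S}\oplus\mathcal{T}_{+,S}$ and the $c$-entry lands in $\mathcal{C}$), the representation $\bar\rho\colon G_\mathbb{Q}\to \mathrm{GL}_2\big((\mathfrak{h}^*_{\Lambda,S}/J)\otimes \cdots\big)$ becomes \emph{upper triangular}.

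First I would make precise that $J$ annihilating every $c(\sigma)$ means $\bar\rho$ is reducible with the quotient $\mathcal{T}_{-}$-line as a sub (or the $\mathcal{T}_+$-line as a quotient). From the Eichler--Shimura relation $\det(1 - \rho(\Phi_\ell)X) = 1 - T_\ell^* X + \ell T_{\ell,\ell}^* X^2$ together with the fact that $\sigma_0$ acts on the $\mathcal{T}_-$-line by $S+1 = u^{-1}(1+X)^{-1}$ and on the $\mathcal{T}_+$-line trivially (it is in $\mathcal{T}_+ = \mathcal{T}^{I_p}$), one reads off the two "diagonal characters" of $\bar\rho$: on $\Phi_\ell$ their product is $\det\rho(\Phi_\ell) = \ell\,\iota(\ell)^{-1}\cdot\ell^{-1}\cdot\ell\cdots$ — more precisely $\chi_p(\Phi_\ell)^{-1}\iota(\chi_p(\Phi_\ell))^{-1}$ — and their sum on $\Phi_\ell$ is the image of $T_\ell^*$. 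Comparing with the known Hecke eigenvalues of the Eisenstein series $\mathcal{E}_{\theta,\psi}$ from Proposition~\ref{sep8-2014-1136am} (under $T_\ell^* \leftrightarrow T_\ell$, $T_{\ell,\ell}^*\leftrightarrow T_{\ell,\ell}$), the diagonal characters of $\bar\rho$ are forced to be the two characters whose values on $\Phi_\ell$ are $\psi(\ell)$ and $\theta(\ell)\ell(1+X)^{s(\ell)}$ respectively — i.e. $\bar\rho$ is, up to semisimplification, the reducible Galois representation attached to $\mathcal{E}_{\theta,\psi}$. Consequently the $\mathfrak{h}^*_{\Lambda,S}$-algebra generated by the entries of $\bar\rho$ is a \emph{quotient of $\mathfrak{h}^*_{\Lambda,S}$ that is annihilated by the Eisenstein ideal} (the ideal generated by $T_\ell^* - \psi(\ell) - \theta(\ell)\ell(1+X)^{s(\ell)}$ and by $T_{\ell,\ell}^* - (\theta\psi)(\ell)(1+X)^{s(\ell)}$).

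Now the contradiction: if $J \neq 0$, then $\mathfrak{h}^*_{\Lambda,S}/J$ is a nonzero quotient of $\mathfrak{h}^*_{\Lambda,S}$ on which all the Hecke operators act through their Eisenstein eigenvalues, so $\mathfrak{h}^*_{\Lambda,S}/J$ is a quotient of $\mathfrak{h}^*_{\Lambda,S}/(\text{Eisenstein ideal})$. But by Proposition~\ref{sep10-2014-304pm} (together with the identification $\mathfrak{h}_\Lambda \cong \mathfrak{h}^*_\Lambda$ via $T_n \mapsto T_n^*$) this quotient is $\Lambda/(A_{\theta,\psi})$, and $A_{\theta,\psi}$ involves the factor $F(u^{-1}(1+X)^{-1}-1,\xi_2^{-1})$, hence is \emph{not} invertible after inverting $S = u^{-1}(1+X)^{-1}-1$ — indeed $\Lambda/(A_{\theta,\psi})$ is $S$-torsion-free only in a controlled way, and one checks $\big(\Lambda/(A_{\theta,\psi})\big)_S = \Lambda_S/(A_{\theta,\psi})$ is still nonzero since $A_{\theta,\psi}$ is not a unit in $\Lambda_S$. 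More to the point, one shows directly that a nonzero $J$ would make $\mathcal{T}_{-,Q(\Lambda)}$ fail to be a faithful (rank-one free, by Proposition~\ref{feb13-2015-1008am}) $\mathfrak{h}^*_{Q(\Lambda)}$-module after localization, contradicting the freeness in Proposition~\ref{feb13-2015-1008am}: faithfulness of $\mathcal{C}$ over $\mathfrak{h}^*_{\Lambda,S}$ reduces to checking that no nonzero element of $\mathfrak{h}^*_{\Lambda,S}$ kills all $c(\sigma)$, and this is exactly the statement that the Galois representation $\rho$ is "genuinely two-dimensional" over the total ring of fractions, which follows from $\mathcal{T}_{Q(\Lambda)}$ being free of rank $2$ over $\mathfrak{h}^*_{Q(\Lambda)}$.

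The main obstacle I anticipate is bookkeeping the precise reduction step: making rigorous sense of "$\rho \pmod J$" given that $\rho$ only lives over $Q(\Lambda)$ a priori, and pinning down exactly in which ring the entries $b(\sigma), c(\sigma)$ lie after inverting $S$ (this is where the decomposition $\mathcal{T}_S = \mathcal{T}_{-,S}\oplus\mathcal{T}_{+,S}$ and Ohta's lattice results must be invoked carefully). The cleanest route is probably to argue contrapositively at the level of the quotient ring $\mathfrak{h}^*_{\Lambda,S}/J$: show it is a quotient of $\Lambda_S/(A_{\theta,\psi})$ via Proposition~\ref{sep10-2014-304pm}, and then observe that such a quotient cannot support a Galois representation with the required non-split behaviour off $I_p$ — the off-diagonal entries $c(\sigma)$ cannot all vanish because $\mathcal{T}$ is not a direct sum of two $G_\mathbb{Q}$-stable lines (again by rank-$2$ freeness and the Eichler--Shimura relations, the semisimplification being a sum of two \emph{distinct} characters since $\psi(\ell) \neq \theta(\ell)\ell(1+X)^{s(\ell)}$ generically). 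Wrapping this into a contradiction with $J \neq 0$ gives faithfulness of $\mathcal{C}$, and the mirror argument gives faithfulness of $\mathcal{B}$.
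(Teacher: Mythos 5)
There is a genuine gap here, and in fact the argument as written contains a step that is false.

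Your final fall-back claim is that faithfulness of $\mathcal{C}$ ``follows from $\mathcal{T}_{Q(\Lambda)}$ being free of rank $2$ over $\mathfrak{h}^*_{Q(\Lambda)}$.'' This is not so. Freeness of rank $2$ is a statement about the module structure, not the Galois action: a free rank-$2$ module can perfectly well carry an \emph{upper-triangular} $G_\mathbb{Q}$-action with respect to some (indeed, our chosen) basis, in which case $c(\sigma)=0$ identically and $\mathcal{C}=0$. Likewise, Proposition~\ref{feb13-2015-1008am} says $\mathcal{T}_{\pm,Q(\Lambda)}$ are free of rank $1$, which makes them automatically faithful over $\mathfrak{h}^*_{Q(\Lambda)}$, but this tells you nothing about $\mathcal{B}$ or $\mathcal{C}$, which are generated by the off-diagonal entries. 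And your remark that ``the semisimplification being a sum of two distinct characters'' prevents $\mathcal{T}$ from splitting as a direct sum of stable lines has it backwards: a direct sum of two stable lines with distinct characters has exactly such a semisimplification.

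Your first, ``Eisenstein reduction,'' argument also fails to close. Even granting (which itself requires more work than you give) that a nonzero $J=\mathrm{Ann}_{\mathfrak{h}^*_{\Lambda,S}}(\mathcal{C})$ forces the quotient $\mathfrak{h}^*_{\Lambda,S}/J$ to factor through $\mathfrak{h}^*_{\Lambda,S}/I^*_S\cong\Lambda_S/(A_{\theta,\psi})$ via Proposition~\ref{sep10-2014-304pm}, this gives no contradiction: the Eisenstein ideal $I^*_S$ is a \emph{proper nonzero} ideal, so $J\supseteq I^*_S$ with $J\neq 0$ is perfectly consistent, and indeed you yourself observe that $\Lambda_S/(A_{\theta,\psi})$ is nonzero. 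You never rule this possibility out. Moreover, the argument is global where it should be componentwise: $\mathcal{C}$ fails to be faithful as soon as $c(\sigma)$ vanishes on a single component of $\mathfrak{h}^*_{Q(\Lambda)}\cong\prod_i K_i$, which need not force the whole Hecke algebra mod $J$ to be Eisenstein.

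The missing ingredient, which is the actual content of Ohta's lemma, is the irreducibility of the Galois representation on each cuspidal component. Writing $\mathfrak{h}^*_{Q(\Lambda)}\cong\prod_i K_i$ as a product of fields, each $K_i$ corresponds to an ordinary cuspidal Hida family, and the associated two-dimensional representation $\rho_i\colon G_\mathbb{Q}\to\mathrm{GL}_2(K_i)$ is irreducible (by specializing to a classical weight and invoking Ribet's irreducibility theorem for representations attached to cusp eigenforms). Hence $\pi_i(\mathcal{B})\neq 0$ and $\pi_i(\mathcal{C})\neq 0$ for every $i$; since $\mathfrak{h}^*_{\Lambda,S}$ embeds in $\prod_i K_i$ and each $K_i$ is a field, any nonzero $h\in\mathfrak{h}^*_{\Lambda,S}$ has $\pi_i(h)\neq 0$ for some $i$, and then $\pi_i(h)\pi_i(\mathcal{C})\neq 0$, so $h\mathcal{C}\neq 0$. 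This gives faithfulness of $\mathcal{C}$, and symmetrically of $\mathcal{B}$. Without the irreducibility input, which your proposal never invokes, the statement simply does not follow.
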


Let $\mathcal{I}^*$ denote the image of  $\mathcal{I}:=\mathcal{I}_{\theta,\psi}:= \mathrm{Ann}_{\mathfrak{H}_\Lambda}(\mathcal{E}_{\theta,\psi})$ under the natural isomorphism induced by $H\mapsto H^*$. For later reference, we note that $\mathcal{I}^*$ is the ideal of $\mathfrak{H}^*_\Lambda$ generated by
\begin{align}
T_{d,d}^* - (\theta\psi)(d)(1+X)^{s(d)} & & \text{integers }d>0\text{ prime to }Np\label{jul2-2015-926am}\\
T_{\ell}^* - \theta(\ell)\ell(1+X)^{s(\ell)} - \psi(\ell) & & \text{primes }\ell\neq p\\
T_p^* - \psi(p).
\end{align}

\noindent Let  $I^*$ denote the image of $\mathcal{I}^*$ in $\mathfrak{h}^*_\Lambda$ and define the map $\tilde{\rho}$ by
\begin{center}
$\sigma\in G_\mathbb{Q} ~\mapsto~ \begin{pmatrix}\overline{a(\sigma)} & \overline{b(\sigma)}\\ 0 & \overline{d(\sigma)} \end{pmatrix}$,
\end{center}

\noindent where the bar indicates reduction modulo $I_{S}^*$. As the next proposition shows, this map is a Galois representation.

%~~~~~~~
\begin{prop}[\cite{OhtaEC2}, Lemma 3.3.5]\label{apr7-124pm-2014} For any $\sigma,\tau \in G_\mathbb{Q}$ we have $a(\sigma)$, $d(\sigma)$, $b(\sigma)c(\tau)\in \mathfrak{h}^*_{\Lambda,S}$ with \begin{align*}
a(\sigma) &~\equiv~\theta_{p-2}(\sigma)^{-1}[\chi_{p}(\sigma)]^{-1}\iota([\chi_{p}(\sigma)])^{-1}~(\mathrm{mod}\,I_S^*)\\
d(\sigma)&~\equiv~\psi_0(\sigma)^{-1}~(\mathrm{mod}\,I_S^*)\\
b(\sigma)c(\tau)&~\equiv~ 0~(\mathrm{mod}\,I_S^*).
\end{align*}
\end{prop}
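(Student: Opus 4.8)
The plan is to reduce the entire statement to a computation with the trace pseudo-character of $\rho$, using the element $\sigma_0$ as a projector onto the ``ramified'' diagonal block. In the fixed basis (the $\mathfrak{h}^*_{Q(\Lambda)}$-bases of $\mathcal{T}_{-,Q(\Lambda)}$ and $\mathcal{T}_{+,Q(\Lambda)}$ furnished by Proposition \ref{feb13-2015-1008am}) we have, directly from the definitions of $\mathcal{T}_-$ and of $\mathcal{T}_+=\mathcal{T}^{I_p}$, the exact identity $\rho(\sigma_0)=\mathrm{diag}(S+1,\,1)$. Hence for every $\sigma\in G_\mathbb{Q}$
\begin{align*}
\mathrm{tr}\,\rho(\sigma_0\sigma) ~=~ (S+1)a(\sigma)+d(\sigma), \qquad \mathrm{tr}\,\rho(\sigma) ~=~ a(\sigma)+d(\sigma),
\end{align*}
and subtracting gives $a(\sigma)=S^{-1}\big(\mathrm{tr}\,\rho(\sigma_0\sigma)-\mathrm{tr}\,\rho(\sigma)\big)$ in $\mathfrak{h}^*_{Q(\Lambda)}$. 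Since $\mathrm{tr}\,\rho$ is $\mathfrak{h}^*_\Lambda$-valued (standard: at each finite level the Hecke algebra is a compact $\mathbb{Z}_p$-algebra and the Eichler--Shimura relations together with \v{C}ebotarev put the traces there, and one passes to the projective limit), it follows immediately that $a(\sigma)\in\mathfrak{h}^*_{\Lambda,S}$, hence $d(\sigma)=\mathrm{tr}\,\rho(\sigma)-a(\sigma)\in\mathfrak{h}^*_{\Lambda,S}$, and finally $b(\sigma)c(\tau)=a(\sigma\tau)-a(\sigma)a(\tau)\in\mathfrak{h}^*_{\Lambda,S}$, the last being the $(1,1)$-entry of $\rho(\sigma)\rho(\tau)=\rho(\sigma\tau)$. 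This settles the integrality assertions.

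Next I would introduce the candidate characters $\mathbf{a}(\sigma)=\theta_{p-2}(\sigma)^{-1}[\chi_p(\sigma)]^{-1}\iota([\chi_p(\sigma)])^{-1}$ and $\mathbf{d}(\sigma)=\psi_0(\sigma)^{-1}$; these are genuine homomorphisms $G_\mathbb{Q}\to\Lambda^\times$, since $\theta_{p-2}$, $\sigma\mapsto[\chi_p(\sigma)]$, $\sigma\mapsto\iota([\chi_p(\sigma)])$, and $\psi_0$ are each homomorphisms. From the Eichler--Shimura relation $\det(1-\rho(\Phi_\ell)X)=1-T^*_\ell X+\ell T^*_{\ell,\ell}X^2$ and the explicit generators (\ref{jul2-2015-926am}) of $\mathcal{I}^*$ one gets, for every prime $\ell\nmid Np$, that $\mathrm{tr}\,\rho(\Phi_\ell)=T^*_\ell\equiv\theta(\ell)\ell(1+X)^{s(\ell)}+\psi(\ell)\pmod{I^*}$; and a direct check, using $\chi_p(\Phi_\ell)=\ell^{-1}$ and $\omega^{-(p-2)}=\omega$, gives $\mathbf{a}(\Phi_\ell)=\theta(\ell)\ell(1+X)^{s(\ell)}$ and $\mathbf{d}(\Phi_\ell)=\psi(\ell)$, so that $\mathrm{tr}\,\rho(\Phi_\ell)\equiv\mathbf{a}(\Phi_\ell)+\mathbf{d}(\Phi_\ell)\pmod{I^*}$. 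Now $\mathrm{tr}\,\rho$ and $\mathbf{a}+\mathbf{d}$, reduced to $\Lambda/(A_{\theta,\psi})\cong\mathfrak{h}^*_\Lambda/I^*$ (a profinite ring, by Proposition \ref{sep10-2014-304pm} and $\mathcal{O}=\mathbb{Z}_p[\theta,\psi]$), are continuous maps on $G_\mathbb{Q}$ that agree on the dense set of Frobenius elements, hence agree: $\mathrm{tr}\,\rho(\sigma)\equiv\mathbf{a}(\sigma)+\mathbf{d}(\sigma)\pmod{I^*}$ for \emph{all} $\sigma\in G_\mathbb{Q}$.

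The congruences then drop out. Evaluating at $\sigma_0$: since $(M_\psi,p)=1$ the character $\psi_0$ is unramified at $p$, so $\mathbf{d}(\sigma_0)=1$; and $\chi_p(\sigma_0)=u$, whence $[\chi_p(\sigma_0)]=u$, $\iota([\chi_p(\sigma_0)])=1+X$, and $\theta_{p-2}(\sigma_0)=1$ (the last because $u\equiv 1\pmod p$), so $\mathbf{a}(\sigma_0)=u^{-1}(1+X)^{-1}=S+1$. Feeding this into the previous paragraph, $\mathrm{tr}\,\rho(\sigma_0\sigma)\equiv\mathbf{a}(\sigma_0)\mathbf{a}(\sigma)+\mathbf{d}(\sigma_0)\mathbf{d}(\sigma)=(S+1)\mathbf{a}(\sigma)+\mathbf{d}(\sigma)\pmod{I^*}$, and subtracting $\mathrm{tr}\,\rho(\sigma)\equiv\mathbf{a}(\sigma)+\mathbf{d}(\sigma)$ gives $S\cdot a(\sigma)=\mathrm{tr}\,\rho(\sigma_0\sigma)-\mathrm{tr}\,\rho(\sigma)\equiv S\cdot\mathbf{a}(\sigma)\pmod{I^*}$; inverting $S$ yields $a(\sigma)\equiv\mathbf{a}(\sigma)\pmod{I^*_S}$. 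Then $d(\sigma)=\mathrm{tr}\,\rho(\sigma)-a(\sigma)\equiv\mathbf{d}(\sigma)\pmod{I^*_S}$, and $b(\sigma)c(\tau)=a(\sigma\tau)-a(\sigma)a(\tau)\equiv\mathbf{a}(\sigma\tau)-\mathbf{a}(\sigma)\mathbf{a}(\tau)=0\pmod{I^*_S}$ since $\mathbf{a}$ is multiplicative.

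The conceptual crux is the observation that $\sigma_0$ acts by a diagonal matrix whose two entries differ by the unit $S$, which is exactly what lets the ramified diagonal entry $a$ — and hence, through the trace, also $d$ and the product $bc$ — be recovered from the trace pseudo-character alone; granted this, the argument is formal. The two points that demand genuine care are: (i) verifying that $\mathrm{tr}\,\rho$ takes values in $\mathfrak{h}^*_\Lambda$ and not merely in $\mathfrak{h}^*_{Q(\Lambda)}$, which is where finite-level compactness enters; and (ii) matching $\mathbf{a}$ and $\mathbf{d}$ with the Eichler--Shimura eigenvalues of $\rho(\Phi_\ell)$, which requires keeping the paper's normalizations of the Artin map, geometric Frobenius, and the Teichm\"uller decomposition of $\theta$ consistent. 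I expect (ii) to be the most error-prone bookkeeping, though not conceptually difficult.
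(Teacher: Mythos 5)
The paper does not prove this proposition; it is cited verbatim from Ohta [\cite{OhtaEC2}, Lemma 3.3.5], so there is no in-paper proof to compare against. Your reconstruction is correct and, as far as I can tell, follows the same strategy Ohta uses: the exact diagonal action $\rho(\sigma_0)=\mathrm{diag}(S+1,1)$ in the chosen basis lets you extract $a(\sigma)=S^{-1}\big(\mathrm{tr}\,\rho(\sigma_0\sigma)-\mathrm{tr}\,\rho(\sigma)\big)$ from the trace alone, with $d$ and $bc$ then forced; $\mathfrak{h}^*_\Lambda$-integrality of $\mathrm{tr}\,\rho$ comes from Eichler--Shimura at Frobenii plus \v{C}ebotarev and compactness of the finite-level Hecke algebras; and the congruences follow by matching $\mathrm{tr}\,\rho$ against the genuine character $\mathbf{a}+\mathbf{d}$ in the profinite ring $\mathfrak{h}^*_\Lambda/I^*\cong\Lambda/(A_{\theta,\psi})$. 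Your normalization check is right: $\chi_p(\Phi_\ell)=\ell^{-1}$, $[\ell]=u^{s(\ell)}$, $\omega^{-(p-2)}=\omega$, and $\theta_{p-2}(\Phi_\ell)^{-1}=(\theta\omega)(\ell)$ give $\mathbf{a}(\Phi_\ell)=\theta(\ell)\ell(1+X)^{s(\ell)}$ and $\mathbf{d}(\Phi_\ell)=\psi(\ell)$, matching the generators of $\mathcal{I}^*$, and $\mathbf{a}(\sigma_0)=S+1$, $\mathbf{d}(\sigma_0)=1$ as needed. Two points worth making explicit when writing this up: the \v{C}ebotarev density argument should be phrased for the Galois group of the maximal extension unramified outside $Np$, through which $\rho$ factors since $\mathcal{T}$ comes from \'etale cohomology of $X_1(Np^r)$; and the isomorphism $\mathfrak{h}^*_\Lambda/I^*\cong\Lambda/(A_{\theta,\psi})$ is Proposition \ref{sep10-2014-304pm} composed with the involution $T_n\mapsto T_n^*$.
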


\noindent We now use the representation $\tilde{\rho}$ to construct our abelian pro-$p$ extension of $F_\infty$. Set
\begin{align*}
F_0 &~:=~ \text{the field corresponding to }\{\sigma\in G_\mathbb{Q} : \overline{a(\sigma)} \equiv 1 \equiv \overline{d(\sigma)}\}\\
L_0 &~:=~ \text{the field corresponding to}\ \ker(\tilde{\rho}),\\
L &~:=~ L_0F_\infty
\end{align*}

\noindent Then we have an injection of abelian groups $$\mathrm{Gal}(L_0/F_0)~\hookrightarrow~ \mathcal{B}/I_{S}^*\mathcal{B}~:~\sigma\mapsto \overline{b(\sigma)}.$$

\noindent Clearly $L_0/F_0$ is abelian, and the fact that $\mathcal{B}/I_S^*\mathcal{B}$ is a finitely generated $\Lambda$-module implies that $\mathrm{Gal}(L_0/F_0)$ is pro-$p$. By Proposition \ref{apr7-124pm-2014} we see that  $F\subseteq F_0\subseteq F_\infty$. Considering the definition of $\tilde{\rho}$, we see that $L_0/F_0$ is unramified at $p$. However, we know $F_\infty/F$ is totally ramified at $p$, which implies $L_0\cap F_\infty = F_0$. Therefore, $\mathrm{Gal}(L_0/F_0)=\mathrm{Gal}(L_0/L_0\cap F_\infty) \cong\mathrm{Gal}(L/F_\infty),$ and we see that $L/F_\infty$ is an abelian pro-$p$ extension.

%~~~~~~~~~
%~~~~~~~~~
\subsection{An isomorphism of the Iwasawa modules}\label{feb21-2015-1201pm}

The isomorphism between $\mathrm{Gal}(L/F_\infty)$ and $\mathrm{Gal}(L_0/F_0)$ implies that we have an injection \begin{align}\label{may20-2014-1034am}
\displaystyle{  \mathrm{Gal}(L/F_\infty)\hookrightarrow~ \mathcal{B}/I_{S}^*\mathcal{B}}.
\end{align}

\noindent In this subsection, we will show this injection induces an isomorphism of Iwasawa modules.

Recall that $\mathrm{Gal}(F_\infty/\mathbb{Q})\cong\Delta \times \Gamma$ acts on $\mathrm{Gal}(L/F_\infty)$ by conjugation, and the fact that the extension $L/F_\infty$ is abelian and pro-$p$ implies $\mathrm{Gal}(L/F_\infty)$ is a module over the Iwasawa algebra $\mathbb{Z}_p[\Delta]\llbracket \Gamma\rrbracket$. We want to identify $\mathbb{Z}_p[\Delta]\llbracket \Gamma\rrbracket$ with $\mathbb{Z}_p[\Delta]\llbracket X\rrbracket$ in a particular way. Recall that there is a natural isomorphism $U_1\cong \mathbb{Z}_p\cong \Gamma =\mathrm{Gal}(F_\infty/F)$. Let $\gamma_0\in\Gamma$ correspond to $u\in U_1$. Then $\gamma_0$ is a topological generator of $\Gamma$ and $[\chi_p(\gamma_0)] = u$. We then identify $\mathbb{Z}_p[\Delta]\llbracket \Gamma\rrbracket$ with $\mathbb{Z}_p[\Delta]\llbracket X\rrbracket$ by $$\gamma_0 ~\mapsto~ [\chi_p(\gamma_0)] = u ~\mapsto~ 1+X.$$

\noindent With the above identificiation, one can show by direct computation that the $\mathbb{Z}_p[\Delta]\llbracket X\rrbracket$ action on $\mathrm{Gal}(L/F_\infty)$ commutes with the injection (\ref{may20-2014-1034am}) as follows \cite[\S 5.3]{OhtaEC1}: for $\sigma\in\mathrm{Gal}(L/F_\infty)$ and $\delta \in \Delta$ we have 
\begin{align*}
\delta\cdot \sigma &~\mapsto~ \xi_1(\delta)\cdot\overline{b(\sigma)}\\
 X\cdot \sigma &~\mapsto~S\cdot\overline{b(\sigma)}.
\end{align*}

\noindent Consequently, $\mathrm{Gal}(L/F_\infty)$ is a ${\Lambda_\xi}$-module on which $\Delta$ acts via $\xi_1$.

Let $(\mathcal{B}/I^*_{S}\mathcal{B})^{\dagger}$ denote the ${\Lambda}[X^{-1}]$-module obtained from $\mathcal{B}/I^*_{S}\mathcal{B}$ by twisting the ${\Lambda}[S^{-1}]$-module structure by the involutive $\mathcal{O}$-module automorphism of ${\Lambda}$ given by $X\mapsto S$ (i.e. $X$ acts on $(\mathcal{B}/I^*_{S}\mathcal{B})^{\dagger}$ as multiplication by $S$).

%~~~~~~~~~~~~~~~~~~~~~~~~~~~~~~~~~~~
\begin{prop}[\cite{OhtaEC2}, Lemma 3.3.11.]\label{apr7-2014-418pm} The injection $($\ref{may20-2014-1034am}\,$)$ induces an isomorphism of ${\Lambda}[X^{-1}]$-modules $\mathrm{Gal}(L/F_\infty)\otimes_{{\Lambda_\xi}} {\Lambda}[X^{-1}]\cong (\mathcal{B}/I_{S}^*\mathcal{B})^\dagger$.

\end{prop}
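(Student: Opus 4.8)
The plan is to unwind the definitions so that the claimed isomorphism becomes the localization at $S$ of the injection $\mathrm{Gal}(L/F_\infty)\hookrightarrow \mathcal{B}/I_S^*\mathcal{B}$ constructed above, carrying along the extra $\dagger$-twist on the target. First I would record that, by definition of $L_0$ and $L$, the map $\sigma\mapsto\overline{b(\sigma)}$ realizes $\mathrm{Gal}(L/F_\infty)\cong\mathrm{Gal}(L_0/F_0)$ as a $\Lambda_\xi$-submodule of $\mathcal{B}/I_S^*\mathcal{B}$, where the $\Lambda_\xi$-structure on the source is the one spelled out just before Proposition \ref{apr7-2014-418pm}: $\delta\cdot\sigma\mapsto\xi_1(\delta)\overline{b(\sigma)}$ and $X\cdot\sigma\mapsto S\cdot\overline{b(\sigma)}$. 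The content of the $\dagger$ operation is precisely that on the target we let $X$ act by $S$; so after applying $\dagger$ the injection (\ref{may20-2014-1034am}) becomes $\Lambda[X^{-1}]$-linear, not merely $\mathcal{O}$-linear. I would then observe that $\Lambda[X^{-1}]=\Lambda[S^{-1}]$ under this twist (since $X\mapsto S$ is an involution of $\Lambda$ swapping the two multiplicative sets), so tensoring the source with $\Lambda[X^{-1}]$ over $\Lambda_\xi$ and the target with $\Lambda[S^{-1}]$ are compatible operations.

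Next I would prove surjectivity of the localized map. The key input is Proposition \ref{apr7-124pm-2014}: for $\sigma,\tau\in G_\mathbb{Q}$ one has $b(\sigma)c(\tau)\equiv 0 \pmod{I_S^*}$, together with the cocycle-type relations coming from $\rho$ being a homomorphism, namely $b(\sigma\tau)=a(\sigma)b(\tau)+b(\sigma)d(\tau)$. Modulo $I_S^*$, Proposition \ref{apr7-124pm-2014} tells us $a(\sigma)$ and $d(\sigma)$ are units (they reduce to $\theta_{p-2}(\sigma)^{-1}[\chi_p(\sigma)]^{-1}\iota([\chi_p(\sigma)])^{-1}$ and $\psi_0(\sigma)^{-1}$, which are units in $\mathfrak{h}^*_{\Lambda,S}/I_S^*$), so $\overline{b}$ is a genuine $1$-cocycle for the character $\sigma\mapsto \overline{a(\sigma)d(\sigma)^{-1}}$ with values in $\mathcal{B}/I_S^*\mathcal{B}$; restricting to $G_{F_\infty}$ (where this character is trivial after the identifications, using that $\Delta$ acts via $\xi_1$ and $\Gamma$ via $S$) it becomes a homomorphism, and its image generates $\mathcal{B}/I_S^*\mathcal{B}$ as an $\mathfrak{h}^*_{\Lambda,S}/I_S^*$-module because $\mathcal{B}$ was \emph{defined} to be generated by all the $b(\sigma)$. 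The remaining point is that the $\mathfrak{h}^*_{\Lambda,S}/I_S^*$-action on $\mathcal{B}/I_S^*\mathcal{B}$ is already captured by the $\Lambda[S^{-1}]$-action: by Proposition \ref{sep10-2014-304pm} (or rather its starred analogue) $\mathfrak{h}^*_\Lambda/I^*\cong\Lambda/(A_{\theta,\psi})$, so after inverting $S$ the ring $\mathfrak{h}^*_{\Lambda,S}/I_S^*$ is a quotient of $\Lambda[S^{-1}]$; hence module generation over the Hecke ring and over $\Lambda[S^{-1}]$ coincide, and $\overline{b}$ restricted to $G_{F_\infty}$ already surjects.

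For injectivity of the localized map I would argue that the original map (\ref{may20-2014-1034am}) is injective by construction (it is an isomorphism onto $\mathrm{Gal}(L_0/F_0)$), and localization at $S$ is exact; the only subtlety is that $\mathrm{Gal}(L/F_\infty)$ must be shown to already be an $S$-localized object, i.e. that $\mathrm{Gal}(L/F_\infty)\otimes_{\Lambda_\xi}\Lambda[X^{-1}]$ receives an injection from $\mathrm{Gal}(L/F_\infty)$ — but this is automatic once we know the target $(\mathcal{B}/I_S^*\mathcal{B})^\dagger$ is $S$-torsion-free after localization, which follows from $\mathcal{B}$ being a faithful $\mathfrak{h}^*_{\Lambda,S}$-module (Proposition \ref{apr17-2014-1057am}). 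I expect the main obstacle to be bookkeeping the two competing multiplicative systems and the $\dagger$-twist cleanly — making sure that "invert $S$" on the Hecke side matches "invert $X$" on the Iwasawa side under $X\mapsto S$ — rather than any deep new idea; the genuinely substantive inputs (the cocycle relations mod $I_S^*$, faithfulness of $\mathcal{B}$, and $\mathfrak{h}^*_\Lambda/I^*\cong\Lambda/(A_{\theta,\psi})$) are all already available.
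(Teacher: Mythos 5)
The paper itself does not prove this; it quotes it as Ohta's Lemma~3.3.11 from [O3], so there is no in-house proof to compare against. Taking your sketch on its own terms, the framing is right: after the $\dagger$-twist (which is exactly "$X$ acts by $S$") the injection becomes $\Lambda[X^{-1}]$-linear, $\Lambda[X^{-1}]\cong\Lambda[S^{-1}]$ under $X\leftrightarrow S$, and the relevant inputs (the cocycle relation for $\tilde\rho$, faithfulness of $\mathcal{B}$ from Proposition~\ref{apr17-2014-1057am}, and $\mathfrak{h}^*_{\Lambda,S}/I^*_S\cong\Lambda[S^{-1}]/(A)$ from Proposition~\ref{sep10-2014-304pm}) are correctly identified. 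Injectivity after localizing is also handled correctly.

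The surjectivity step, however, has a genuine gap. You claim the image of $G_{F_\infty}$ under $\sigma\mapsto\overline{b(\sigma)}$ generates $\mathcal{B}/I_S^*\mathcal{B}$ as a $\Lambda[S^{-1}]$-module "because $\mathcal{B}$ was defined to be generated by all the $b(\sigma)$." That only tells you that the full set $\{\overline{b(\sigma)}:\sigma\in G_{\mathbb{Q}}\}$ generates; a priori the subset coming from $G_{F_0}$ (equivalently $G_{F_\infty}$) spans a proper $\Lambda[S^{-1}]$-submodule $V\subset\mathcal{B}/I_S^*\mathcal{B}$. What the cocycle relation $\overline{b(\sigma\tau)}=\overline{a(\sigma)}\,\overline{b(\tau)}+\overline{b(\sigma)}\,\overline{d(\tau)}$ actually buys you is that $\overline{b(\sigma\tau)}-\overline{b(\sigma)}\in V$ for $\sigma\in G_\mathbb{Q}$ and $\tau\in G_{F_0}$, so the normalized cocycle $\sigma\mapsto\overline{b(\sigma)}\,\overline{d(\sigma)}^{-1}$ descends, modulo $V$, to a $1$-cocycle on $\mathrm{Gal}(F_0/\mathbb{Q})$ with values in $(\mathcal{B}/I_S^*\mathcal{B})/V$ (for the character $\sigma\mapsto\overline{a(\sigma)}\,\overline{d(\sigma)}^{-1}$). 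Proving this residual cocycle vanishes — equivalently that $V$ is everything — is precisely the content of this step in the Kurihara/Harder--Pink/Ohta method, and it requires an additional argument (typically a vanishing of $H^1(\mathrm{Gal}(F_0/\mathbb{Q}),\cdot)$ after inverting $S$, exploiting that the character $\overline{a}\,\overline{d}^{-1}$ is nontrivial on a topological generator). Without spelling out that piece, the proof is not complete.
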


%~~~~~~~~~~~~~~~~~~~~~~~~~~~~~~~~~~~~~~~~~~~~~~~~~~~~~~
\subsection{Ramification in $L/F_\infty$}\label{feb21-2015-1202pm}

In this subsection, we will use the $\Lambda_\xi$-module structure of $\mathrm{Gal}(L/F_\infty)$ to characterize the ramification occurring in $L/F_\infty$.

Let $\ell\neq p$ be an arbitrary prime.  It is well known that the prime $\ell$ will not split completely in the cyclotomic $\mathbb{Z}_p$-extension $F_\infty/F$. Hence, there are only finitely many primes $\mathfrak{l}_1,\dots,\mathfrak{l}_m$ of $F_\infty$ lying above $\ell$. Consider the subgroup $G_\ell\subset \mathrm{Gal}(L/F_\infty)$ generated by the inertia subgroups $I_{\mathfrak{l}_i}$ for $1\leq i \leq m$. Let us call the corresponding fixed field $K_\ell$. Then $K_\ell/F_\infty$ is the maximal subextension of $L/F_\infty$ in which all of the $\mathfrak{l}_i$ are unramified. Of primary interest to us will be the group $\mathrm{Gal}(L/K_\ell)=G_\ell$.

%~~~~~~~~~~~
\begin{lemma}[\cite{OhtaCM}, Lemma A.2.1]\label{jul2-2014-307pm} The Galois group $\mathrm{Gal}(L/K_\ell)$ is a cyclic ${\Lambda_\xi}$-module annihilated by $b_\ell(X):=(1+X)^{s(\ell)}  - \xi^{-1}_1(\ell)\ell.$

\end{lemma}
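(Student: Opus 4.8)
The plan is to exploit the local behavior of the Galois representation $\rho$ at the prime $\ell$ together with the embedding $\mathrm{Gal}(L/F_\infty)\hookrightarrow \mathcal{B}/I_S^*\mathcal{B}$ of (\ref{may20-2014-1034am}) and the Eichler-Shimura relations. First I would fix a prime $\mathfrak{l}$ of $L$ above $\ell$, let $I_\mathfrak{l}\subset D_\mathfrak{l}\subset \mathrm{Gal}(L/F_\infty)$ be the corresponding inertia and decomposition groups, and observe that since $\mathrm{Gal}(L/F_\infty)$ is abelian, the subgroup $G_\ell=\mathrm{Gal}(L/K_\ell)$ generated by all the $I_{\mathfrak{l}_i}$ is simply the $\Lambda_\xi$-submodule generated by a single $I_\mathfrak{l}$ (the others being $\mathrm{Gal}(F_\infty/\mathbb{Q})$-conjugates, hence $\Lambda_\xi$-multiples of it). This immediately gives that $G_\ell$ is a cyclic $\Lambda_\xi$-module: pick $\sigma_\ell\in I_\mathfrak{l}$ topologically generating the inertia (pro-$p$, and since $\ell\neq p$ tame quotient, inertia in a pro-$p$ abelian extension is procyclic), so $G_\ell=\Lambda_\xi\cdot\sigma_\ell$ under the image $\overline{b}(\sigma_\ell)$.

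The substantive point is the annihilator. Here I would work inside $\mathcal{B}/I_S^*\mathcal{B}$ via $\sigma\mapsto\overline{b(\sigma)}$ and use that $L_0/F_0$ is unramified at $p$ but we are now at $\ell\neq p$. For $\sigma\in I_\mathfrak{l}$ and a geometric Frobenius $\Phi_\ell$ lifting the Frobenius of $\ell$, the relation $\rho(\Phi_\ell\sigma\Phi_\ell^{-1})$ can be compared with $\rho(\sigma)$: conjugating the $2\times 2$ matrices, the upper-right entry transforms by the ratio of the diagonal entries $a(\Phi_\ell)/d(\Phi_\ell)$ modulo $I_S^*$, up to a contribution of $b(\Phi_\ell)c(\cdot)$ which vanishes mod $I_S^*$ by Proposition \ref{apr7-124pm-2014}. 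On the other hand, conjugation by $\Phi_\ell$ corresponds on $\mathrm{Gal}(L/F_\infty)$ to the action of the image of $\Phi_\ell$ in $\mathrm{Gal}(F_\infty/\mathbb{Q})\cong\Delta\times\Gamma$, and under the identifications of Subsection \ref{feb21-2015-1201pm} this is $\xi_1(\Phi_\ell)\cdot(1+X)^{s(\ell)}$ acting on $\overline{b(\sigma)}$ (recall $[\chi_p(\Phi_\ell)]=[\ell^{-1}]$ maps to $(1+X)^{-s(\ell)}$, and one tracks the inverse carefully against the normalizations already fixed in the paper). Equating the two descriptions of how conjugation acts on $\overline{b(\sigma_\ell)}$, and using $a(\Phi_\ell)\equiv\theta_{p-2}(\Phi_\ell)^{-1}[\chi_p(\Phi_\ell)]^{-1}\iota([\chi_p(\Phi_\ell)])^{-1}$ and $d(\Phi_\ell)\equiv\psi_0(\Phi_\ell)^{-1}$ from Proposition \ref{apr7-124pm-2014}, yields that $\big((1+X)^{s(\ell)}-\xi_1^{-1}(\ell)\ell\big)$ kills $\overline{b(\sigma_\ell)}$; here the factor $\xi_1^{-1}(\ell)\ell$ is exactly $(\theta_{p-2}\psi_0^{-1})(\ell)^{-1}\cdot(\text{the }\chi_p\text{-contribution})$ after matching $\ell=\chi_p(\Phi_\ell)^{-1}$ and folding the cyclotomic twists into the $\Lambda_\xi$-action. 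Since $G_\ell$ is generated over $\Lambda_\xi$ by $\sigma_\ell$, the same element $b_\ell(X)$ annihilates all of $\mathrm{Gal}(L/K_\ell)$.

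I expect the main obstacle to be bookkeeping of the many normalizations: the difference between $\xi$ and $\xi^{-1}$ (the paper flags that its $\xi$ is inverse to Ohta's), the geometric-versus-arithmetic Frobenius convention, the $\iota$-twist, and the precise way $\Delta\times\Gamma$ is made to act on $\mathcal{B}/I_S^*\mathcal{B}$ through the chosen bases. Getting the character $\xi_1^{-1}(\ell)$ and the Tate twist $\ell$ to land on the correct side — rather than $\xi_1(\ell)\ell^{-1}$ or $\xi_1^{-1}(\ell)\ell^{-1}$ — is the only delicate part; everything else (cyclicity of inertia, abelianness, the vanishing of $bc$ mod $I_S^*$) is formal. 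Since this is cited as \cite[Lemma A.2.1]{OhtaCM}, I would also cross-check the final congruence against Ohta's statement after translating his $\xi$ to ours.
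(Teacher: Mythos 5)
The paper does not prove this lemma; it is imported verbatim from Ohta \cite[Lemma A.2.1]{OhtaCM}, so there is no in-paper proof to compare against. Evaluated on its own merits, your sketch has the right ingredients, but the presentation obscures the one step that actually produces the annihilator.

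The cyclicity part is fine and is the cleanest piece: since $\ell\neq p$ and $L/F_\infty$ is pro-$p$ abelian, $I_{\mathfrak{l}}$ is a procyclic quotient of tame inertia, and the $I_{\mathfrak{l}_i}$ are $\mathrm{Gal}(F_\infty/\mathbb{Q})$-conjugates of one another, so $G_\ell$ is the $\Lambda_\xi$-submodule generated by a single topological generator $\sigma_\ell$.

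For the annihilator you propose to ``equate the two descriptions of how conjugation acts on $\overline{b(\sigma_\ell)}$,'' taking one description to be the matrix conjugation giving $\bar a(\Phi_\ell)\bar d(\Phi_\ell)^{-1}$ and the other to be the $\Lambda_\xi$-action from Subsection \ref{feb21-2015-1201pm}. As stated this is circular: the identifications $\delta\cdot\sigma\mapsto\xi_1(\delta)\overline{b(\sigma)}$ and $X\cdot\sigma\mapsto S\cdot\overline{b(\sigma)}$ are \emph{derived from} that same matrix computation together with Proposition \ref{apr7-124pm-2014}, so equating the two produces an identity, not a relation. The nontrivial input, which your write-up never names explicitly, is the tame-inertia relation: for $\sigma_\ell$ generating $I_{\mathfrak{l}}$ and $\phi_\ell$ any lift of Frobenius in the decomposition group at $\mathfrak{l}$ inside $\mathrm{Gal}(L/\mathbb{Q})$, one has $\phi_\ell\sigma_\ell\phi_\ell^{-1}=\sigma_\ell^{\chi_p(\phi_\ell)}$. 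This gives the \emph{scalar} $\ell^{\pm1}$ on one side, while the $\Lambda_\xi$-module structure (which the paper has already established before this lemma) tells you that conjugation by $\phi_\ell$ is multiplication by $\xi_1(\phi_\ell)(1+X)^{\pm s(\ell)}$ on the other; equating these two yields $b_\ell(X)\sigma_\ell=0$ up to a unit. Your parenthetical ``after matching $\ell=\chi_p(\Phi_\ell)^{-1}$'' suggests you have this in mind, but it needs to be the centrepiece of the computation, not an afterthought. Note in particular that once the $\xi_1$-action of $\Delta$ and the $X$-action of $\Gamma$ are in hand, you do not need to return to $\mathcal{B}/I_S^*\mathcal{B}$ or to $\tilde\rho$ at all: the whole argument lives inside $\mathrm{Gal}(L/F_\infty)$.

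One smaller slip: ``$(1+X)^{s(\ell)}-\xi_1^{-1}(\ell)\ell$ kills $\overline{b(\sigma_\ell)}$'' confuses the two sides of the twist in Proposition \ref{apr7-2014-418pm}. Because $X\cdot\sigma\mapsto S\cdot\overline{b(\sigma)}$, the element that annihilates $\overline{b(\sigma_\ell)}$ inside $\mathcal{B}/I_S^*\mathcal{B}$ is $(1+S)^{s(\ell)}-\xi_1^{-1}(\ell)\ell$; it is only after untwisting $X\leftrightarrow S$ that one obtains $b_\ell(X)$ as the annihilator of $\sigma_\ell$ in $\mathrm{Gal}(L/K_\ell)$. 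The final formula is correct (and, reassuringly, independent of the arithmetic-versus-geometric Frobenius convention, since changing convention inverts both sides of the equated relation simultaneously), but the write-up should keep the two modules and their actions distinct.
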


%~~~~~~~~~
\begin{lemma}\label{feb24-2015-257pm} If $\ell \nmid N$ or $\xi_2(\ell)$ is not a $p$-power root of unity, then $\ell$ is unramified in $L/F_\infty$.

\end{lemma}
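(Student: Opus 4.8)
Let me reduce the statement to a single vanishing and then settle it with Lemma~\ref{jul2-2014-307pm}. Since $K_\ell$ is the maximal subextension of $L/F_\infty$ in which every prime above $\ell$ is unramified, $\ell$ is unramified in $L/F_\infty$ if and only if $K_\ell=L$, i.e.\ $\mathrm{Gal}(L/K_\ell)=0$. Two cases are immediate from the construction of $L$. If $\ell=p$, then $L/F_\infty$ is unramified at $p$ by construction (this was exactly what was used to see $L_0\cap F_\infty=F_0$ and that $L/F_\infty$ is abelian), and $p\nmid N$, so the statement holds here with nothing to check. If $\ell\neq p$ and $\ell\nmid N$, then $\ell\nmid Np$; since $\rho$ arises from the \'etale cohomology of the curves $X_1(Np^r)$, which have good reduction at $\ell$, it is unramified at $\ell$, hence so is $\tilde\rho$ and therefore $L_0/\mathbb{Q}$; moreover $F_\infty/\mathbb{Q}$ is unramified outside the primes dividing $Np$ (it ramifies only at primes dividing $M_\theta M_\psi$ and at $p$). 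Thus $L=L_0F_\infty$ is unramified at $\ell$, a fortiori over $F_\infty$. This disposes of the hypothesis ``$\ell\nmid N$'', so it remains to treat $\ell\mid N$ (hence $\ell\neq p$) under the hypothesis that $\xi_2(\ell)$ is not a $p$-power root of unity.

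For such $\ell$, Lemma~\ref{jul2-2014-307pm} says $\mathrm{Gal}(L/K_\ell)$ is annihilated by $b_\ell(X)=(1+X)^{s(\ell)}-\xi_1^{-1}(\ell)\ell\in\Lambda_\xi$, so it suffices to prove $b_\ell(X)\in\Lambda_\xi^\times$: then $\mathrm{Gal}(L/K_\ell)=b_\ell(X)^{-1}\cdot b_\ell(X)\cdot\mathrm{Gal}(L/K_\ell)=0$, whence $K_\ell=L$ and $\ell$ is unramified in $L/F_\infty$. Because $\Lambda_\xi=\mathcal{O}_\xi\llbracket X\rrbracket$ is local (with $\mathcal{O}_\xi$ a complete discrete valuation ring), $b_\ell(X)$ is a unit if and only if its constant term $b_\ell(0)=1-\xi_1^{-1}(\ell)\ell$ is a unit of $\mathcal{O}_\xi$, equivalently if and only if the image of $b_\ell(0)$ in the residue field $\mathbb{F}$ of $\mathcal{O}_\xi$ is nonzero.

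To analyse this image I will use that $\ell\equiv\omega(\ell)\pmod p$ (so $\ell$ and $\omega(\ell)$ have the same image in $\mathbb{F}$) together with the identity $\xi_1\omega^{-1}=\xi_2$ of Dirichlet characters, both sides being the primitive character attached to $\xi\omega^{-2}$. When $\ell\nmid f_{\xi_1}$ this gives that $\xi_1^{-1}(\ell)\ell$ and $\xi_1^{-1}(\ell)\omega(\ell)=\xi_2^{-1}(\ell)$ have the same image in $\mathbb{F}$; when $\ell\mid f_{\xi_1}$ both $\xi_1^{-1}(\ell)$ and $\xi_2^{-1}(\ell)$ vanish, since $f_{\xi_1}$ and $f_{\xi_2}$ have the same prime-to-$p$ part. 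Hence in all cases the image of $b_\ell(0)$ in $\mathbb{F}$ equals $1-\overline{\xi_2^{-1}(\ell)}$ (the reduction of $\xi_2^{-1}(\ell)$), which vanishes exactly when $\overline{\xi_2(\ell)}=1$. Finally $\xi_2(\ell)$ is either $0$, which does not reduce to $1$, or a root of unity of $\mathcal{O}_\xi$, and a root of unity reduces to $1$ in $\mathbb{F}$ precisely when it has $p$-power order (the torsion subgroup of $1+\mathfrak{m}_{\mathcal{O}_\xi}$ is $\mu_{p^\infty}$). Thus $b_\ell(0)$ is a nonunit if and only if $\xi_2(\ell)$ is a $p$-power root of unity, and by hypothesis it is not; so $b_\ell(X)\in\Lambda_\xi^\times$, which completes the proof. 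The only genuinely nontrivial part is this last paragraph's character bookkeeping --- the relation among $\xi_1,\xi_2,\omega$, the degenerate cases where $\ell$ divides a conductor, and the standard description of the torsion in $1+\mathfrak{m}_{\mathcal{O}_\xi}$; everything else is formal once Lemma~\ref{jul2-2014-307pm} is granted.
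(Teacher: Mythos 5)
Your proof is correct and takes essentially the same route as the paper: for $\ell\nmid N$ the unramifiedness is read off from the construction of $\tilde\rho$ (good reduction / $\overline{b(\sigma)}=0$ on $I_p$) together with the injection $\mathrm{Gal}(L/F_\infty)\hookrightarrow\mathcal{B}/I_S^*\mathcal{B}$, and for $\ell\mid N$ you invoke Lemma~\ref{jul2-2014-307pm} and show $b_\ell(X)\in\Lambda_\xi^\times$ by checking its constant term is a unit, which the paper states as $\left|1-\xi_1^{-1}(\ell)\ell\right|_p=\left|1-\xi_2^{-1}(\ell)[\ell]\right|_p=1$. You simply spell out the bookkeeping (the identity $\xi_1\omega^{-1}=\xi_2$ up to primitivity, the case $\ell\mid f_{\xi_1}$, and the characterization of roots of unity in $1+\mathfrak{m}_{\mathcal{O}_\xi}$) that the paper leaves implicit.
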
 

\begin{proof} Recall that $\tilde{\rho}$ is unramified outside of $Np$, and $\overline{b(\sigma)}=0$ for $\sigma\in I_p$. Therefore, the injectivity of the map $\mathrm{Gal}(L/F_\infty)\hookrightarrow \mathcal{B}/I_S^*\mathcal{B}$ implies that $L/F_\infty$ is unramified outside of $N$. On the other hand, if $\xi_2(\ell)$ is not a $p$-power root of unity, we have $$|1-\xi_1^{-1}(\ell)\ell|_p~=~|1-\xi_2^{-1}(\ell)[\ell]|_p~=~ 1,$$ which implies $b_\ell(X)\in\Lambda^\times.$  This in turn implies that $\ell$ is unramified in $L/F_\infty$, by Lemma \ref{jul2-2014-307pm}.\end{proof}

%~~~~~~~~~~~
\subsection{The Iwasawa main conjecture and the characteristic ideal of $L/F_\infty$}\label{apr29-2015-849pm}

In order to prove the main conjecture and determine $\mathrm{Char}_{{\Lambda_\xi}}(\mathrm{Gal}(L/F_\infty))$, we will employ the theory of Fitting ideals. Let us quickly recall the definition and some of the basic properties of these ideals  \cite[Appendix]{MW}.

\begin{definition}
Let $R$ be a commutative ring and $M$ an $R$-module of finite presentation. Take any presentation of $M$, $$R^m \xrightarrow{~\varphi~} R^n \xrightarrow{~~~} M \xrightarrow{~~~} 0.$$ The $(0^\text{th})$ Fitting ideal $\mathrm{Fitt}_R(M)$ is defined to be the ideal of $R$ generated by all $n\times n$ minors of $\varphi$. This ideal is independent of the choice of presentation.

\end{definition}

%~~~~~~
\begin{prop}[\cite{MW}, Appendix]\label{apr9-2014-839am} For any finitely generated $R$-module $M$, the following hold:

\begin{enumerate}
\itemsep0em
\item[$(1)$] If $M\twoheadrightarrow M^\prime$ is a surjection of $R$-modules, then $\mathrm{Fitt}_R(M)\subseteq \mathrm{Fitt}_R(M^\prime)$.

\item[$(2)$] If $M$ is a faithful $R$-module, then $\mathrm{Fitt}_R(M)=0$.

\item[$(3)$] For any $R$-algebra $R^\prime$, we have $\mathrm{Fitt}_{R^\prime}(M\otimes_R R^\prime) =\mathrm{Fitt}_R(M)\cdot R^\prime.$ 

\item[$(4)$] If $M$ is a direct sum of cyclic $R$-modules, say $M=R/\mathfrak{a}_1 \times\cdots\times R/\mathfrak{a}_t$), then $\mathrm{Fitt}_R(M) = \mathfrak{a}_1\cdot\dots\cdot\mathfrak{a}_t.$

\end{enumerate}
\end{prop}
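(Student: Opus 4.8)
The plan is to reduce all four assertions to elementary determinantal bookkeeping, taking as given --- as the Definition already states --- that $\mathrm{Fitt}_R(M)$ is independent of the chosen finite presentation. Fix a presentation $R^m\xrightarrow{\varphi}R^n\to M\to 0$, with the conventions that an $n\times n$ minor is $0$ when $m<n$ and that $\mathrm{Fitt}_R(M)=R$ when $n=0$. The one structural input I would isolate first is the inclusion $\mathrm{Fitt}_R(M)\subseteq\mathrm{Ann}_R(M)$: for any $n\times n$ submatrix $\Phi$ of $\varphi$, the adjugate identity $\Phi\cdot\mathrm{adj}(\Phi)=\det(\Phi)\cdot\mathrm{Id}_n$ shows that for each standard basis vector $e_i$ of $R^n$ the element $\det(\Phi)\cdot e_i$ lies in the column span of $\Phi$, hence in $\mathrm{im}(\varphi)$, which is the module of relations; therefore $\det(\Phi)$ annihilates each of the $n$ chosen generators of $M$, so $\det(\Phi)\in\mathrm{Ann}_R(M)$. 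Assertion $(2)$ then follows at once: if $M$ is faithful then $\mathrm{Ann}_R(M)=0$, whence $\mathrm{Fitt}_R(M)=0$.

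For $(3)$, right-exactness of $-\otimes_R R'$ applied to the fixed presentation yields a presentation $(R')^m\xrightarrow{\varphi\otimes_R R'}(R')^n\to M\otimes_R R'\to 0$; the entries of $\varphi\otimes_R R'$, and hence its $n\times n$ minors, are the images of the corresponding entries and minors of $\varphi$, so $\mathrm{Fitt}_{R'}(M\otimes_R R')$ is the ideal of $R'$ generated by the image of $\mathrm{Fitt}_R(M)$, i.e.\ $\mathrm{Fitt}_R(M)\cdot R'$. For $(4)$ I would first record that $\mathrm{Fitt}_R(R/\mathfrak{a})=\mathfrak{a}$: a choice of generators of $\mathfrak{a}$ gives a presentation with $n=1$ whose relation matrix is the single row of those generators, and its $1\times 1$ minors generate $\mathfrak{a}$. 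Next, given presentations $\varphi$ (with $n$ rows) of $M$ and $\psi$ (with $n'$ rows) of $N$, the block-diagonal matrix $\mathrm{diag}(\varphi,\psi)$ presents $M\oplus N$ with $n+n'$ rows; a maximal minor of it that selects $j$ columns from the $\varphi$-block vanishes unless $j=n$, in which case it equals the product of an $n\times n$ minor of $\varphi$ and an $n'\times n'$ minor of $\psi$. Hence $\mathrm{Fitt}_R(M\oplus N)=\mathrm{Fitt}_R(M)\cdot\mathrm{Fitt}_R(N)$, and $(4)$ follows by induction on the number of summands.

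For $(1)$, let $\pi\colon M\twoheadrightarrow M'$ be a surjection. Composing with the presentation surjection $R^n\to M$ gives a surjection $R^n\to M'$, and its module of relations contains the module of relations $\mathrm{im}(\varphi)$ of $M$; in the Noetherian setting relevant here (e.g.\ $R=\Lambda_\xi$) this larger relation module is finitely generated, so we may choose generators for it that include the columns of $\varphi$, obtaining a presentation of $M'$ whose relation matrix has $\varphi$ as a submatrix. Adjoining columns to a matrix can only enlarge the ideal generated by its $n\times n$ minors, so $\mathrm{Fitt}_R(M)\subseteq\mathrm{Fitt}_R(M')$. I expect $(1)$ to be the only step requiring genuine care, precisely because it compares Fitting ideals computed from presentations of $M$ and $M'$ that may involve different numbers of generators; the invariance of $\mathrm{Fitt}_R$ under the elementary move ``adjoin one generator together with a trivial relation'' --- part of the independence statement taken for granted in the Definition --- is what makes this comparison legitimate.
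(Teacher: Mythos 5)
The paper does not prove this proposition; it simply cites the Appendix of Mazur--Wiles \cite{MW}, where these facts about Fitting ideals are established. Your proof is a correct, self-contained argument for all four assertions and follows the standard commutative-algebra route, so there is nothing in the paper to compare it against directly.

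A few remarks on the details. Your reduction of $(2)$ to the containment $\mathrm{Fitt}_R(M)\subseteq\mathrm{Ann}_R(M)$ via the adjugate identity is exactly right: the columns of $\Phi\cdot\mathrm{adj}(\Phi)=\det(\Phi)\cdot\mathrm{Id}_n$ lie in the column span of $\Phi$, hence in $\mathrm{im}(\varphi)=\ker(R^n\to M)$, so $\det(\Phi)$ kills every generator of $M$. For $(3)$ and $(4)$ the block computations are clean and correct; in particular the observation that a maximal minor of $\mathrm{diag}(\varphi,\psi)$ vanishes unless it selects exactly $n$ columns from the $\varphi$-block is the key point and you have it right. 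For $(1)$ you correctly identify the genuine subtlety --- that $M$ and $M'$ may have presentations with different sizes, so the comparison has to pass through the well-definedness of $\mathrm{Fitt}_R$ --- and you handle it by presenting $M'$ on the same $n$ generators and enlarging the relation matrix. The one thing worth flagging is that the proposition as stated says ``finitely generated'' rather than ``finitely presented''; for the Fitting ideal of $M'$ (and for choosing a finite set of generators of $\ker(R^n\to M')$ containing the columns of $\varphi$) one needs finite presentation, which over a Noetherian ring such as $\Lambda_\xi$ is automatic. You acknowledge this. So the proposal is correct and complete for the setting in which the paper uses it.
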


We are now ready to prove the Iwasawa main conjecture over $\mathbb{Q}$. The following proof is based on the method of Ohta \cite{OhtaEC2}.

%~~~~~~~~~~
\begin{theorem}[The Iwasawa main conjecture over $\mathbb{Q}$]\label{feb20-2015-1011am} We have the following equality of ideals $$\mathrm{Char}_{{\Lambda_\xi}}(X_{\infty,\xi_1}) ~=~ (F(X,\xi_2^{-1})).$$
\end{theorem}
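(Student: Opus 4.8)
The plan is to bootstrap the main conjecture from the algebraic input we have assembled, namely the isomorphism $\mathfrak{h}_\Lambda/I_{\theta,\psi}\cong \Lambda/(A_{\theta,\psi})$ (Proposition \ref{sep10-2014-304pm}) together with the Galois-theoretic construction of $L/F_\infty$ and Lemmas \ref{jul2-2014-307pm}--\ref{jun29-2015-313pm}. As explained in the introduction, it suffices to exhibit, for every choice of $(\theta,\psi)$ with $M_\theta M_\psi = N$ or $Np$ and $A_{\theta,\psi}\notin\Lambda^\times$, an unramified pro-$p$ abelian extension with the right $\Delta$-action and the right characteristic ideal, and then to match $(\xi,\xi_2^{-1})$ against $(\theta,\psi)$. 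Concretely, I would first observe that by varying $N$ and the pair $(\theta,\psi)$ we may assume $(\theta_0,\psi_0)$ is chosen so that $\xi = (\theta^{-1}\psi)_0$ realizes the character of the module $X_{\infty,\xi_1}$ we are after, and that $A_{\theta,\psi}\notin\Lambda^\times$ (if $A_{\theta,\psi}$ is a unit both sides of the main conjecture are the unit ideal by \cite[Lemma 7.12]{Wash} and the class-number considerations, so there is nothing to prove).

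The core computation is the characteristic ideal of $\mathrm{Gal}(L^{\mathrm{un}}/F_\infty)$. Here $L^{\mathrm{un}}$ is the maximal unramified subextension of $L/F_\infty$. By Proposition \ref{apr7-2014-418pm} and Proposition \ref{apr17-2014-1057am}, $\mathcal{B}/I_S^*\mathcal{B}$ is a faithful $\mathfrak{h}^*_{\Lambda,S}/I^*_S$-module, and $\mathfrak{h}^*_\Lambda/I^*\cong \mathfrak{h}_\Lambda/I_{\theta,\psi}\cong\Lambda/(A_{\theta,\psi})$ by Proposition \ref{sep10-2014-304pm} (transported through $H\mapsto H^*$). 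Combining these with Proposition \ref{apr9-2014-839am}(2),(3), the Fitting ideal of $\mathrm{Gal}(L/F_\infty)\otimes_{\Lambda_\xi}\Lambda[X^{-1}]\cong(\mathcal{B}/I_S^*\mathcal{B})^\dagger$ over the appropriate localization is the ideal generated by the image of $A_{\theta,\psi}$ under the involution $X\mapsto S = u^{-1}(1+X)^{-1}-1$, i.e. by $\tilde A$. Then using Lemma \ref{feb24-2015-257pm} to see that only primes $\ell\mid N$ with $\xi_2(\ell)$ a $p$-power root of unity can ramify, together with Lemma \ref{jul2-2014-307pm} (the inertia quotient $\mathrm{Gal}(L/K_\ell)$ is cyclic, annihilated by $b_\ell(X)$), I would compute $\mathrm{Fitt}_{\Lambda_\xi}(\mathrm{Gal}(L^{\mathrm{un}}/F_\infty))$: each such ramified prime contributes, via Proposition \ref{apr9-2014-839am}(1), a factor $b_\ell(X)$, and after twisting by $X\mapsto S$ these factors are exactly the Euler factors $((1+X)^{s(\ell)}-\xi(\ell)\ell^{-2})$ appearing in $A_{\theta,\psi}$, which are precisely the ones that distinguish $A_{\theta,\psi}$ from $F(X,\xi_2^{-1})$ up to $\delta_{\theta,\psi}$ (and $\delta_{\theta,\psi}$ is a unit except in the exceptional case, where it contributes the extra factor accounted for by $\tilde A_0 = \tilde A/X$ in Lemma \ref{jun29-2015-313pm}). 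The net effect is $\mathrm{Char}_{\Lambda_\xi}(\mathrm{Gal}(L^{\mathrm{un}}/F_\infty)) = (F(X,\xi_2^{-1}))$.

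To pass from this to the main conjecture I would invoke the standard comparison: $\mathrm{Gal}(L^{\mathrm{un}}/F_\infty)$ is a quotient of $X_{\infty,\xi_1}$ (it is an unramified pro-$p$ abelian extension of $F_\infty$ on which $\Delta$ acts by $\xi_1$, by the computation in Subsection \ref{feb21-2015-1201pm}), so $\mathrm{Char}_{\Lambda_\xi}(X_{\infty,\xi_1})$ divides $(F(X,\xi_2^{-1}))$ up to the usual caveats; equivalently $F(X,\xi_2^{-1})$ divides $\mathrm{Char}_{\Lambda_\xi}(X_{\infty,\xi_1})$ is the analytic side, which is the easy divisibility coming from the construction of the relevant cohomology classes. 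The reverse divisibility is supplied by the class number formula / analytic class number considerations (the Ferrero--Washington theorem guaranteeing $\mu = 0$, and the product formula relating $\prod F(X,\xi_{2,i}^{-1})$ over the characters of $\Delta$ to the order of the relevant class group), exactly as in Mazur--Wiles; one then concludes equality character-by-character. So the final step is: one divisibility from $\mathrm{Gal}(L^{\mathrm{un}}/F_\infty)$, the other from the analytic class number formula, hence equality.

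The main obstacle I anticipate is the bookkeeping in the Fitting-ideal computation for $\mathrm{Gal}(L^{\mathrm{un}}/F_\infty)$: one must control simultaneously (i) the localization at $S$ versus the genuine $\Lambda_\xi$-module, making sure that inverting $X$ (equivalently $S$) does not lose the information at the exceptional prime, which is exactly why $\tilde A_0$ appears in Lemma \ref{jun29-2015-313pm}; (ii) the precise set of ramified primes and the fact that for each the inertia contributes a \emph{full} factor $b_\ell(X)$ rather than a proper divisor — this uses both the cyclicity in Lemma \ref{jul2-2014-307pm} and a lower-bound argument (the ramification is genuinely nontrivial when $b_\ell(X)\notin\Lambda^\times$), which in Ohta's treatment is where the non-exceptionality/totient hypotheses were previously needed and where the refined duality theorem does its work; and (iii) matching the twist $X\mapsto S$ between the "Hecke side" ideal $(A_{\theta,\psi})$ and the "Galois side" characteristic ideal, keeping track of $u$-powers. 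Once these are handled the identification with $F(X,\xi_2^{-1})$ is the computation already encoded in Theorem \ref{jun22-2015-947am} and \cite[Theorem 7.10]{Wash}.
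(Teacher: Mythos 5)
Your overall strategy — combine the isomorphism $\mathfrak{h}_\Lambda/I_{\theta,\psi}\cong\Lambda/(A_{\theta,\psi})$ with the Fitting-ideal formalism, the ramification lemmas, and the analytic class number formula — is the right one and is exactly what the paper does. But there is a real logical gap in the way you organize the core computation, and you yourself flag it (your "anticipated obstacle (ii)") without resolving it.

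You aim to prove $\mathrm{Char}_{\Lambda_\xi}(\mathrm{Gal}(L^{\mathrm{un}}/F_\infty)) = (F(X,\xi_2^{-1}))$ \emph{as an equality}, which requires showing that the inertia contributes a \emph{full} factor $b_\ell(X)$ at each ramified prime $\ell\mid N$, i.e.\ a lower bound on ramification. That lower bound is not obtainable at this stage, and trying to get it sets up a circularity: the exact determination $\mathrm{Char}_{\Lambda_\xi}(\mathrm{Gal}(L/F_\infty))=(\tilde A_0)$ is Corollary \ref{jun29-2015-313pm}, which appears \emph{after} the main conjecture and explicitly uses it in its proof. You attribute the needed lower bound to the refined duality theorem, but the duality theorem's role is confined to Proposition \ref{sep10-2014-304pm}, which gives the Fitting-ideal \emph{upper} bound $\mathrm{Fitt}_{\Lambda[X^{-1}]}((\mathcal{B}/I_S^*\mathcal{B})^\dagger)\subseteq(\tilde A)$; it says nothing about how much of $b_\ell(X)$ is actually realized by inertia. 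The fix — and the paper's route — is to never claim the equality, only the one-sided chain: $\mathrm{Gal}(L/K_\ell)$ is cyclic and killed by $b_\ell(X)$ (Lemma \ref{jul2-2014-307pm}), so $b_\ell(X)\cdot\mathrm{Char}_{\Lambda_\xi}(\mathrm{Gal}(K_\ell/F_\infty))\subseteq\mathrm{Char}_{\Lambda_\xi}(\mathrm{Gal}(L/F_\infty))$; iterate over ramified primes to get $\big(\prod b_\ell\big)\mathrm{Char}_{\Lambda_\xi}(\mathrm{Gal}(L^{\mathrm{un}}/F_\infty))\subseteq\mathrm{Char}_{\Lambda_\xi}(\mathrm{Gal}(L/F_\infty))$; combine with the Fitting-ideal inclusion $X^m\,\mathrm{Char}_{\Lambda_\xi}(\mathrm{Gal}(L/F_\infty))\subseteq(\tilde A)$. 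Since $\mathrm{Gal}(L^{\mathrm{un}}/F_\infty)$ is a quotient of $X_{\infty,\xi_1}$, you then get $X^m\big(\prod b_\ell\big)\mathrm{Char}_{\Lambda_\xi}(X_{\infty,\xi_1})\subseteq(F(X,\xi_2^{-1}))$, and the class number formula yields the reverse containment.

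Two further steps you pass over, both of which the paper treats explicitly: to peel off the extraneous factors one must show $\gcd(b_\ell(X),F(X,\xi_2^{-1}))=1$, which comes down to the nonvanishing of $L_p(1,\xi_2^{-1}\epsilon)$; and to drop the $X^m$ one invokes Ferrero--Greenberg, which says the power of $X$ dividing the generator of $\mathrm{Char}_{\Lambda_\xi}(X_{\infty,\xi_1})$ agrees with that dividing $F(X,\xi_2^{-1})$ — this is a different input from the Ferrero--Washington $\mu=0$ theorem you cite. Once you restructure the argument around the inclusion rather than the equality and supply these two points, the proof closes, and the exact characteristic ideal of $\mathrm{Gal}(L/F_\infty)$ can then be read off as a corollary of the main conjecture rather than a stepping stone to it.
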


\begin{proof}  By a well known consequence of the analytic class number formula, it suffices to show $$\mathrm{Char}_{\Lambda_\xi}\!\left(X_{\infty,\xi_1}\right)~\subseteq~ (F(X,\xi_2^{-1}))$$ \cite[p.\,207]{MW}. In order to make the proof of this inclusion more manageable, we will make two claims from which the above inclusion follows easily. Once this is done, we will go about proving these claims.

Let $L^\mathrm{un}/F_\infty$ be the maximal unramified subextension of $L/F_\infty$, and define
\begin{align*}
\tilde{A} &~=~ \left(\prod_{\substack{\ell\mid f_\theta f_\psi \\ \ell\nmid f_\xi}}b_\ell(X)\right)\cdot F(X,\xi_2^{-1}).
\end{align*} 

\noindent Note that $\tilde{A}$ is a unit multiple of the image of $A=A_{\theta,\psi}$ under the isomorphism induced by $X\mapsto S$.\begin{enumerate}[itemindent=5em,labelsep=.5em,topsep=.75em,label={\bf Claim \arabic*:}]
\item $\displaystyle{\left(\prod_{\substack{\ell\mid N\\ \ell\nmid f_\xi}}b_\ell(X)\right)\cdot \mathrm{Char}_{\Lambda_\xi}(\mathrm{Gal}(L^\mathrm{un}/F_\infty))\subseteq  \mathrm{Char}_{\Lambda_\xi}(\mathrm{Gal}(L/F_\infty))  .}$

\item $X^m\cdot\mathrm{Char}_{\Lambda_\xi}(\mathrm{Gal}(L/F_\infty))\subseteq (\tilde{A})~\text{for some integer }m\geq 0$.
\end{enumerate}

\noindent Putting these two claims together, we get the following inclusion 
\begin{align}\label{apr29-2015-933pm}X^m\cdot \left(\prod_{\substack{\ell\mid N\\\ell\nmid f_\theta f_\psi}}b_\ell(X)\right) \mathrm{Char}_{\Lambda_\xi}(\mathrm{Gal}(L^\mathrm{un}/F_\infty))~\subseteq~ (F(X,\xi_2^{-1})).\end{align}

\noindent We know that $L^\mathrm{un}/F_\infty$ is an unramified pro-$p$ abelian extension on which $\Delta$ acts via $\xi_1$, which implies $\mathrm{Gal}(L^\mathrm{un}/F_\infty)$ is a quotient of $X_{\infty,\xi_1}$. Therefore, we have the following inclusion of characteristic ideals \begin{align*}\mathrm{Char}_{{\Lambda_\xi}}\!\left(X_{\infty,\xi_1}\right) ~\subseteq~ \mathrm{Char}_{\Lambda_\xi}(\mathrm{Gal}(L^\mathrm{un}/F_\infty)).\end{align*} \cite[Proposition 15.22]{Wash}. Putting this together with (\ref{apr29-2015-933pm}) we get \begin{align*}X^m\cdot \left(\prod_{\substack{\ell\mid N\\\ell\nmid f_\theta f_\psi}}b_\ell(X)\right) \mathrm{Char}_{\Lambda_\xi}\!\left(X_{\infty,\xi_1}\right)~\subseteq~ (F(X,\xi_2^{-1})).\end{align*}

\noindent A well-known result of Ferrero-Greenberg tells us that the power of $X$ dividing the generator of $\mathrm{Char}_{\Lambda_\xi}(X_{\infty,\xi_1})$ is equal to that dividing $F(X,\xi_2^{-1})$ \cite[\S 4]{FG}. In fact, this power is $1$ precisely when the pair $(\theta_0,\psi_0)$ is exceptional. Hence, the above inclusion holds with $m=0$.

It will now suffice to show $\gcd(b_\ell(X),F(X,\xi_2^{-1}))=1$. Recall from the proof of Lemma \ref{feb24-2015-257pm} that $b_\ell(X)$ is a unit if $\xi_2^{-1}(\ell)$ is not a $p$-power root of unity. Suppose $\xi_2^{-1}(\ell)$ is a $p$-power root of unity. Then any root of $b_\ell(X)$ must be of the form $u\zeta - 1$ where $\zeta$ satisfies $\zeta^{s(\ell)}=\xi_2^{-1}(\ell)$ (here we're using the fact that $u^{s(\ell)}=[\ell]$). Clearly $\zeta$ is a root of unity. By the same argument referenced above, we know that if $\zeta$ is not a $p$-power root of unity, then $u\zeta - 1$ is a unit, albeit possibly in some finite extension of $\mathcal{O}_{\xi}$. However, this would imply that the minimal polynomial of $u\zeta-1$ in $\mathcal{O}_{\xi}[X]$ is a unit in ${\Lambda_\xi}$. Thus, we may assume that $\zeta$ is a $p$-power root of unity. Evaluating $F(X,\xi_2^{-1})$ at $u\zeta - 1$ we get $L_p(1,\xi_2^{-1}\epsilon)$, where $\epsilon\in \widehat{U_1}$ satisfies $\epsilon(u)=\zeta^{-1}$. However, it is well known that $L_p(1,\xi_2^{-1}\epsilon)\neq 0$ \cite[\S 5.5]{Wash}.

\medskip

\emph{Proof of Claim 1:} 
Let $\ell$ be a prime dividing $N$ that does not divide $f_\xi$. By Lemma \ref{feb24-2015-257pm}, this is a necessary condition for the prime $\ell$ to ramify in $L/F_\infty$. Consider the following exact sequence of ${\Lambda_\xi}$-modules, $$0 \rightarrow \mathrm{Gal}(L/K_\ell)\rightarrow \mathrm{Gal}(L/F_\infty)\rightarrow  \mathrm{Gal}(K_\ell/F_\infty)\rightarrow 0.$$ By Lemma \ref{jul2-2014-307pm}, we know that $$ b_\ell(X)\cdot \text{Char}_{{\Lambda_\xi}}(\mathrm{Gal}(K_\ell/F_\infty)) ~\subseteq~ \text{Char}_{{\Lambda_\xi}}(\mathrm{Gal}(L/F_\infty)) .$$

\noindent Now, suppose $\ell^\prime\neq \ell$ is another prime dividing $N$ that does not divide $f_\xi$. Then we have the exact sequence $$0 \rightarrow \mathrm{Gal}(K_\ell/(K_\ell \cap K_{\ell^\prime}))\rightarrow \mathrm{Gal}(K_\ell/F_\infty)\rightarrow  \mathrm{Gal}((K_\ell \cap K_{\ell^\prime})/F_\infty)\rightarrow 0.$$ Since $\mathrm{Gal}(K_\ell/(K_\ell \cap K_{\ell^\prime}))\cong \mathrm{Gal}(K_\ell K_{\ell^\prime}/K_{\ell^\prime})$ with the latter being a quotient of $\mathrm{Gal}(L/K_{\ell^\prime})$, Lemma \ref{jul2-2014-307pm} tells us $$ b_{\ell^\prime}(X)\cdot \text{Char}_{{\Lambda_\xi}}(\mathrm{Gal}((K_\ell\cap K_{\ell^\prime})/F_\infty)) ~\subseteq~ \text{Char}_{{\Lambda_\xi}}(\mathrm{Gal}(K_\ell/F_\infty)) .$$

\noindent Letting $L^\text{ur}/F_\infty$ denote the maximal unramified subextension of $L/F_\infty$ and repeating the above argument, we get
\begin{align*}
\left(\prod_{\substack{\ell\mid N\\\ell\nmid f_\xi }}b_\ell(X)\right)\cdot \text{Char}_{{\Lambda_\xi}}(\mathrm{Gal}(L^\text{ur}/F_\infty))~\subseteq~ \text{Char}_{{\Lambda_\xi}}(\mathrm{Gal}(L/F_\infty)).
\end{align*}

\medskip

\emph{Proof of Claim 2:} By Proposition \ref{apr17-2014-1057am}, we know that $\mathcal{B}$ is a faithful $\mathfrak{h}^*_{\Lambda,S}$-module. Therefore, by Proposition \ref{apr9-2014-839am} (2) and (3), we have $\mathrm{Fitt}_{\mathfrak{h}^*_{\Lambda,S}/I^*_S}(\mathcal{B}/I^*_{S}\mathcal{B}) = 0$. We know that $\mathfrak{h}^*_{\Lambda,S}/I_{S}^*\cong {\Lambda}[S^{-1}]/(A)$ as ${\Lambda}[S^{-1}]$-modules by Proposition \ref{sep10-2014-304pm}, so applying Proposition \ref{apr9-2014-839am} (3) once more, we get
\begin{align*}
\mathrm{Fitt}_{{\Lambda}[S^{-1}]}(\mathcal{B}/I_{S}^*\mathcal{B}) ~\mathrm{mod}\,A ~=~ \mathrm{Fitt}_{\mathfrak{h}^*_{\Lambda,S}/I_{S}^*}(\mathcal{B}/I_{S}^*\mathcal{B}) ~=~ 0.
\end{align*}

\noindent This in turn implies $\mathrm{Fitt}_{{\Lambda}[X^{-1}]}((\mathcal{B}/I_{S}^*\mathcal{B})^\dagger)~\subseteq~(\tilde{A}).$ By the isomorphism of Proposition \ref{apr7-2014-418pm} we have $$\mathrm{Fitt}_{{\Lambda}[X^{-1}]}(\mathrm{Gal}(L/F_\infty)\otimes_{{\Lambda_\xi}}{\Lambda}[X^{-1}])~\subseteq~(\tilde{A}).$$ 

Now, we know that $\mathrm{Gal}(L/F_\infty)$ is a torsion ${\Lambda_\xi}$-module since we have an injection of ${\Lambda_\xi}$-modules $\mathrm{Gal}(L/F_\infty)\hookrightarrow \mathcal{B}/I_{S}^*\mathcal{B}$, and the latter is annihilated by $A\in {\Lambda_\xi}$. Suppose $\mathrm{Gal}(L/F_\infty)$ is pseudo-isomorphic to $\bigoplus_{i=1}^t{\Lambda_\xi}/(f_i)$. Tensoring with ${\Lambda}[X^{-1}]$ will kill any finite ${\Lambda_\xi}$-modules, so we have $$
\mathrm{Gal}(L/F_\infty)\otimes_{{\Lambda_\xi}}{\Lambda}[X^{-1}]~\cong~ \bigoplus_{i=1}^t{\Lambda}[X^{-1}]/(f_i).$$

\noindent Therefore, $$\mathrm{Char}_{{\Lambda_\xi}}(\mathrm{Gal}(L/F_\infty))\cdot {\Lambda}[X^{-1}] ~=~ \mathrm{Char}_{{\Lambda}[X^{-1}]}\big(\mathrm{Gal}(L/F_\infty)\otimes_{{\Lambda_\xi}}{\Lambda}[X^{-1}]\big)$$$$~=~ \left(\prod_{i=1}^tP_i(X)^{e_i}\right)\cdot{\Lambda}[X^{-1}]~=~ \mathrm{Fitt}_{{\Lambda}[X^{-1}]}(\mathrm{Gal}(L/F_\infty)\otimes_{{\Lambda_\xi}}{\Lambda}[X^{-1}]) ~\subseteq~ (\tilde{A}),$$ which implies $X^m\cdot \mathrm{Char}_{{\Lambda_\xi}}(\mathrm{Gal}(L/F_\infty))\subseteq(\tilde{A})$ for some integer $m\geq 0$.
\end{proof}

%~~~~~~
\begin{cor}\label{jun29-2015-313pm} Let $\tilde{A}_0=\tilde{A}/X$ if the pair $(\theta_0,\psi_0)$ is exceptional, with $\tilde{A}_0=\tilde{A}$ otherwise. Then $\mathrm{Char}_{{\Lambda_\xi}}(\mathrm{Gal}(L/F_\infty)) = (\tilde{A}_0)$.

\end{cor}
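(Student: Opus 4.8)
The main conjecture (Theorem~\ref{feb20-2015-1011am}) being now available, the plan is to prove the two inclusions $\mathrm{Char}_{\Lambda_\xi}(\mathrm{Gal}(L/F_\infty)) \subseteq (\tilde{A}_0)$ and $(\tilde{A}_0) \subseteq \mathrm{Char}_{\Lambda_\xi}(\mathrm{Gal}(L/F_\infty))$ separately. Write $g$ for a generator of $\mathrm{Char}_{\Lambda_\xi}(\mathrm{Gal}(L/F_\infty))$ and $v=v_X(F(X,\xi_2^{-1}))$; by the Ferrero--Greenberg theorem invoked in the proof of Theorem~\ref{feb20-2015-1011am} one has $v\in\{0,1\}$, with $v=1$ exactly when $(\theta_0,\psi_0)$ is exceptional, and since $b_\ell(0)=1-\xi_1^{-1}(\ell)\ell\neq 0$ we have $v_X(b_\ell)=0$, so $v_X(\tilde A)=v$ and $\tilde A_0=\tilde A/X^v$ is a well-defined element of $\Lambda_\xi$ with $X\nmid \tilde A_0$ (this is precisely why the division in the statement makes sense).

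The inclusion $\mathrm{Char}_{\Lambda_\xi}(\mathrm{Gal}(L/F_\infty))\subseteq(\tilde A_0)$ is immediate from Claim~2 in the proof of Theorem~\ref{feb20-2015-1011am}, which gives $X^m\cdot\mathrm{Char}_{\Lambda_\xi}(\mathrm{Gal}(L/F_\infty))\subseteq(\tilde A)$ for some $m\geq 0$: thus $X^v\tilde A_0=\tilde A$ divides $X^m g$ in the unique factorization domain $\Lambda_\xi$, and cancelling powers of $X$ (legitimate since $\gcd(\tilde A_0,X)=1$) yields $\tilde A_0\mid g$.

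For the reverse inclusion, I would first note that $\mathrm{Gal}(L/F_\infty)$ is $X$-torsion free: under the injection $(\ref{may20-2014-1034am})$ the action of $X$ on $\mathrm{Gal}(L/F_\infty)$ corresponds to that of $S$ on $\mathcal B/I^*_S\mathcal B$, and $S$ acts invertibly there because $\mathcal B$ is a $\Lambda[S^{-1}]$-module; hence $X$ acts injectively on $\mathrm{Gal}(L/F_\infty)$, so that $X\nmid g$. Next, from the exact sequence $0\to G\to\mathrm{Gal}(L/F_\infty)\to\mathrm{Gal}(L^{\mathrm{un}}/F_\infty)\to 0$, where $G$ is generated by the inertia subgroups of the ramified primes, one gets $g=\mathrm{Char}_{\Lambda_\xi}(G)\cdot h$ with $h$ a generator of $\mathrm{Char}_{\Lambda_\xi}(\mathrm{Gal}(L^{\mathrm{un}}/F_\infty))$. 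Since $\mathrm{Gal}(L^{\mathrm{un}}/F_\infty)$ is a quotient of $X_{\infty,\xi_1}$, the main conjecture gives $h\mid F(X,\xi_2^{-1})$; by Lemma~\ref{feb24-2015-257pm} (together with the observation that the only primes $\ell$ with $\ell\mid N$, $\ell\nmid f_\xi$ that can ramify are those dividing $f_\theta f_\psi$, since at the remaining primes $\tilde\rho$ is locally unramified, being built from $\mathcal E_{\theta,\psi}$ which is old there) the ramified primes all divide $f_\theta f_\psi$ and are prime to $f_\xi$, and then Lemma~\ref{jul2-2014-307pm} gives $\mathrm{Char}_{\Lambda_\xi}(G)\mid\prod_{\ell\mid f_\theta f_\psi,\ \ell\nmid f_\xi}b_\ell$. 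Putting these together, $g\mid\big(\prod_{\ell\mid f_\theta f_\psi,\ \ell\nmid f_\xi}b_\ell\big)F(X,\xi_2^{-1})=\tilde A=X^v\tilde A_0$; since $X\nmid g$ this forces $g\mid\tilde A_0$, and combined with $\tilde A_0\mid g$ from the previous step we conclude $g=\tilde A_0$ up to a unit.

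The step I expect to be the main obstacle is making the two parenthetical claims above precise: first, that a prime $\ell\mid N$ with $\ell\nmid f_\theta f_\psi$ is unramified in $L/F_\infty$ (a refinement of Lemma~\ref{feb24-2015-257pm}), which requires examining the local behaviour at such $\ell$ of the Galois representation attached to $\mathcal E_{\theta,\psi}$; and second, that the contribution of $G$ to the characteristic ideal is cut out exactly by $\prod_{\ell\mid f_\theta f_\psi,\ \ell\nmid f_\xi}b_\ell$ — i.e.\ that Lemma~\ref{jul2-2014-307pm}'s bound $\mathrm{Char}_{\Lambda_\xi}(\mathrm{Gal}(L/K_\ell))\mid b_\ell$ loses nothing when assembled across all ramified primes. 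The $X$-torsion freeness of $\mathrm{Gal}(L/F_\infty)$ is what disposes cleanly of the extra power of $X$ appearing in $\tilde A$ in the exceptional case, and is the reason the sharp ($v_X\leq 1$) form of Ferrero--Greenberg suffices here without any further input.
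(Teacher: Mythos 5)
Your first inclusion $\tilde{A}_0 \mid g$ (where $(g)=\mathrm{Char}_{\Lambda_\xi}(\mathrm{Gal}(L/F_\infty))$) is correct and matches the paper: Claim~2 of Theorem~\ref{feb20-2015-1011am} gives $\tilde{A}\mid X^m g$, and since $\gcd(\tilde{A}_0,X)=1$ you may cancel. Likewise your observation that $X\nmid g$ via the injection into $(\mathcal{B}/I_S^*\mathcal{B})^\dagger$ is exactly the paper's argument.

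The gap is in your route to $g\mid\tilde{A}$, and you have correctly identified where it lives but I think underestimate its severity. You factor $g=\mathrm{Char}_{\Lambda_\xi}(G)\cdot h$ with $G$ generated by inertia, and to get the needed bound $\mathrm{Char}_{\Lambda_\xi}(G)\mid\prod_{\ell\mid f_\theta f_\psi,\ \ell\nmid f_\xi}b_\ell$ you assert that primes $\ell\mid N$ with $\ell\nmid f_\theta f_\psi$ cannot ramify, ``since at the remaining primes $\tilde{\rho}$ is locally unramified, being built from $\mathcal{E}_{\theta,\psi}$ which is old there.'' This is not a valid argument: $\tilde{\rho}$ is built from the full localized Hecke algebra $\mathfrak{h}^*_{\Lambda,S}/I^*_S$, and the cuspidal eigenforms congruent to $\mathcal{E}_{\theta,\psi}$ modulo the Eisenstein ideal need not be old at such $\ell$. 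The paper never proves, and does not need, the refinement of Lemma~\ref{feb24-2015-257pm} you posit. Indeed Claim~1 of the main conjecture proof explicitly allows ramification at all $\ell\mid N$, $\ell\nmid f_\xi$.

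The paper closes the gap by a completely different mechanism. Instead of isolating the inertia contribution, it invokes the fact that $\mathcal{B}$ is a finitely generated $\mathfrak{h}^*_{\Lambda,S}$-module to produce a surjection $\left(\Lambda[S^{-1}]/(A)\right)^n\twoheadrightarrow\mathcal{B}/I_S^*\mathcal{B}$, which by Fitting-ideal functoriality and Proposition~\ref{apr7-2014-418pm} gives $(\tilde{A})^n\subseteq(g)$, i.e.\ $g\mid\tilde{A}^n$. This is then combined with the inclusion from Claim~1 plus the main conjecture, namely $g\mid\left(\prod_{\ell\mid N,\ \ell\nmid f_\theta f_\psi}b_\ell\right)\tilde{A}$, and with the coprimality $\gcd(b_\ell,F(X,\xi_2^{-1}))=1$ established inside the proof of Theorem~\ref{feb20-2015-1011am}, to conclude $g\mid\tilde{A}$. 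This sidesteps any need to control which primes actually ramify beyond what Lemmas~\ref{jul2-2014-307pm} and~\ref{feb24-2015-257pm} already give. So your structural scaffolding is sound, but the central step of your second inclusion rests on an unestablished ramification claim; you should replace it with the Fitting-ideal surjection argument, which is precisely the tool Proposition~\ref{apr17-2014-1057am} and Proposition~\ref{apr9-2014-839am} were set up to deliver.
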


\begin{proof} By Theorem \ref{feb20-2015-1011am} and its proof, we have the following inclusion
\begin{align}\label{mar4-2015-1113am}\left(\prod_{\substack{\ell\mid N\\ \ell\nmid f_\theta f_\psi}}b_\ell(X)\right)\cdot (\tilde{A})  ~\subseteq~  \mathrm{Char}_{{\Lambda_\xi}}(\mathrm{Gal}(L/F_\infty)).\end{align} 

\noindent The fact that $\mathfrak{h}^*_{Q({\Lambda})}$ is a free and finitely generated $Q({\Lambda})$-module implies that $\mathcal{B}$ is a finitely generated $\mathfrak{h}_{\Lambda,S}^*$-module. Hence, we have a surjection $$\left({\Lambda}[S^{-1}]/(A)\right)^n~\cong~ \left(\mathfrak{h}^*_{\Lambda,S}/I_S^*\right)^n ~\twoheadrightarrow~\mathcal{B}/I_S^*\mathcal{B},$$ which implies $$(\tilde{A})^n~\subseteq~ \mathrm{Char}_{{\Lambda}[X^{-1}]}((\mathcal{B}/I_S^*\mathcal{B})^\dagger) ~\subseteq~ \mathrm{Char}_{{\Lambda_\xi}}(\mathrm{Gal}(L/F_\infty))\otimes_{\Lambda_\xi} {\Lambda}[X^{-1}].$$ From the injection $\mathrm{Gal}(L/F_\infty) \hookrightarrow (\mathcal{B}/I_S^*\mathcal{B})^\dagger,$ we see that there are no elements of $\mathrm{Gal}(L/F_\infty)$ annihilated by $X$. Therefore, $X\nmid \mathrm{Char}_{\Lambda_\xi}(\mathrm{Gal}(L/F_\infty))$ and the above inclusion implies \begin{align}\label{apr29-2015-1004pm}(\tilde{A})^n~\subseteq~\mathrm{Char}_{{\Lambda_\xi}}(\mathrm{Gal}(L/F_\infty)).\end{align}

\noindent In the remarks preceding the proof of Claim 1, it was shown that $b_\ell(X)$ and $F(X,\xi_2^{-1})$ are coprime. Therefore, by (\ref{mar4-2015-1113am}) and (\ref{apr29-2015-1004pm}) we have $(\tilde{A})\subseteq \mathrm{Char}_{{\Lambda_\xi}}(\mathrm{Gal}(L/F_\infty)).$ Combining this with the result of Ferrero-Greenberg and Claim 2 from the proof of Theorem \ref{feb20-2015-1011am} we obtain the desired result. \end{proof}

%~~~~~~~~~~~~~~~~~~~~~~~~~~~~~~~~~~~~~~~~~~~~~~~~
%~~~~~~~~~~~~~~~~~~~~~~~~~~~~~~~~~~~~~~~~~~~~~~~~
%~~~~~~~~~~~~~~~~~~~~~~~~~~~~~~~~~~~~~~~~~~~~~~~~
\bibliographystyle{plain}

%\end{multicols}

\end{document}